\definecolor{darkgreen}{rgb}{0.0, 0.2, 0.13}
\let\counterwithout\relax
\newcommand{\IN}{\mathbb{N}}
\newcommand{\IZ}{\mathbb{Z}}
\newcommand{\IQ}{\mathbb{Q}}
\newcommand{\IR}{\mathbb{R}}
\newcommand{\IC}{\mathbb{C}}
\newcommand{\IB}{\mathfrak{B}}
\newcommand{\id}{\mathrm{id}}
\newcommand{\ch}{\operatorname{ch}}
\newcommand{\op}{\mathrm{op}}
\newcommand{\KK}{K\! K}
\newcommand{\alg}{\mathrm{alg}}
\newcommand{\CstarAlg}{\mathbf{C^*\text{-}Alg}}
\newcommand{\starAlg}{\mathbf{{}^*\text{-}Alg}}
\newcommand{\Ad}{\operatorname{Ad}}
\newcommand*{\into}{\hookrightarrow}
\newcommand*{\onto}{\twoheadrightarrow}
\newcommand*{\Top}{\mathrm{top}}
\newcommand{\RNum}[1]{\uppercase\expandafter{\romannumeral #1\relax}}
\newtheorem{thm}{Theorem}[section]
\newtheorem*{thm*}{Theorem}
\newtheorem{cor}[thm]{Corollary}
\newtheorem*{cor*}{Corollary}
\newtheorem{lem}[thm]{Lemma}
\newtheorem{prop}[thm]{Proposition}
\newtheorem{Property}[thm]{Property}
\newtheorem{thmintro}{Theorem}
\theoremstyle{definition}
\newtheorem{rem-alt}[thm]{Remark}
\newtheorem*{rem*}{Remark}
\newtheorem{ex-alt}[thm]{Example}
\newtheorem*{example*}{Example}
\newtheorem*{examples*}{Examples}
\newtheorem{defn-alt}[thm]{Definition}
\newtheorem{nota-alt}[thm]{Notation}
\newtheorem{properties-alt}[thm]{Properties}
\newenvironment{defn}    
{%
	\pushQED{\qed}\begin{defn-alt}}
	{\popQED\end{defn-alt}}
\popQED\end{ex-alt}}
\newenvironment{rem}    
{%
	\pushQED{\qed}\begin{rem-alt}}
	{\popQED\end{rem-alt}}
\popQED\end{nota-alt}}
\newenvironment{properties}    
{%
	\pushQED{\qed}\begin{properties-alt}}
	{\popQED\end{properties-alt}}
\numberwithin{equation}{section}
\def\blfootnote{\gdef\@thefnmark{}\@footnotetext}
\begin{document}

\title{$\mathclap{\text{Strong Novikov conjecture for low degree}}$\\ $\mathclap{\text{cohomology and exotic group C*-algebras}}$}

\author{
Paolo Antonini\thanks{
Dipartimento di Matematica e Fisica ``E.\ de Giorgi", Universit\`a del Salento, Via per Arnesano, 73100 Lecce, Italy\newline
\href{mailto:paolo.antonini@unisalento.it}{paolo.antonini@unisalento.it}}
\and
Alcides Buss\thanks{
\mbox{Departamento de Matem\'atica,
 Universidade Federal de Santa Catarina,
 88.040-900 Florian\'opolis,
 Brazil}\newline
 \href{mailto:alcides.buss@ufsc.br}{alcides.buss@ufsc.br}
}
\and
Alexander Engel\thanks{Fakult{\"a}t f{\"u}r Mathematik,
Universit{\"a}t Regensburg,
93040 Regensburg,
Germany\newline
\href{mailto:alexander.engel@mathematik.uni-regensburg.de}{alexander.engel@mathematik.uni-regensburg.de}}
\and
Timo Siebenand\thanks{Mathematisches Institut, Universit{\"a}t M{\"u}nster, 48149 M{\"u}nster, Germany \newline{\href{mailto:timo.siebenand@uni-muenster.de}{timo.siebenand@uni-muenster.de}}}
}

\date{}

\maketitle

\vspace*{-6ex}
\begin{abstract}\noindent
We strengthen a result of Hanke--Schick about the strong Novikov conjecture for low degree cohomology by showing that their non-vanishing result for the maximal group C*-algebra holds for many other exotic group C*-algebras, in particular the one associated to the smallest strongly Morita compatible and exact crossed product functor used in the new version of the Baum--Connes conjecture.
To achieve this we provide a Fell absorption principle for certain exotic crossed product functors.
\end{abstract}

\tableofcontents

\section{Introduction}
Recall the following result of Hanke and Schick \cite{hanke_schick}. Let $G$ be a discrete group and denote by $\Lambda^\ast(G) \subset H^\ast(BG;\IQ)$ the subring of the singular cohomology theory $H^\ast(BG;\IQ)$ with rational coefficients generated by $H^{\le 2}(BG;\IQ)$, the rational cohomology classes of degree at most two.
Further, let $\ch\colon K_\ast(BG)\to
H_\ast(BG;\IQ)$ be the homological Chern character from the $K$-homology to the homology
of the classifying space $BG$ of $G$.
\begin{thmintro}[{\cite{hanke_schick}}]\label{hanke_schick_intro}
Let $h \in K_*(BG)$ such that there is $c \in \Lambda^\ast(G)$ with $\langle c,\ch(h)\rangle \not= 0$.

Then $h$ is not mapped to zero under the assembly map $K_*(BG) \to K_*(C^\ast_{\max} G) \otimes \IR$.
\end{thmintro}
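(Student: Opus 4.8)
The plan is to reduce, using naturality of the assembly map and of the Chern character, to the case where $c = c_1 \smile \cdots \smile c_m$ is a cup product of generators $c_i$ of degree $1$ or $2$, and then to detect the image $\mu_{\max}(h) \in K_*(C^\ast_{\max} G) \otimes \IR$ either by mapping $C^\ast_{\max} G$ to an explicitly computable C*-algebra, or by pairing $\mu_{\max}(h)$ with a cyclic cocycle $\tau_c$ on a dense subalgebra of $C^\ast_{\max}G$ manufactured out of $c$. Working with the \emph{maximal} completion is essential: it receives every unitary representation of $G$ (the trivial one and the projective ones used below), and the assembly map is functorial for arbitrary homomorphisms $G \to H$ because $C^\ast_{\max}(-)$ is. I represent $h$ by a geometric cycle with underlying map $\varphi \colon M \to BG$ and operator $D$, so that $\mu_{\max}(h)$ is the Mishchenko--Fomenko index of $D$ twisted by the flat Mishchenko bundle $\mathcal{L}_G$ of free rank-one $C^\ast_{\max}G$-modules over $BG$, while $\langle c, \ch(h)\rangle$ is a characteristic number of $M$.

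The degree-one generators are handled geometrically. A class $\phi \in H^1(BG;\IZ) = \Hom(G,\IZ)$ gives a classifying map $BG \to B\IZ = S^1$, and by functoriality of assembly the induced $\phi_* \colon K_*(C^\ast_{\max} G) \to K_*(C^\ast_{\max}\IZ) = K_*(C(S^1))$ sends $\mu_{\max}(h)$ to the assembly of the pushed-forward $K$-homology class $\phi^{\Top}_* h \in K_*(S^1)$. Since $C(S^1)$ is commutative, its Chern character is a rational isomorphism onto $H_*(S^1;\IR)$, and pairing with the generator of $H^1(S^1;\IZ)$ recovers $\langle \phi, \ch(h)\rangle$ as a winding number. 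Products of degree-one classes replace $S^1$ by a torus $T^a = B\IZ^a$ with $C^\ast_{\max}\IZ^a = C(T^a)$, on which the same computation goes through; hence if $c$ has only degree-one factors we are already done, the relevant winding-number pairing being exactly $\langle c, \ch(h)\rangle$.

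The real content lies in the degree-two factors. A class $\beta \in H^2(BG;\IZ)$ is $c_1$ of a Hermitian line bundle on $BG$ and is classified by a central extension $1 \to \IZ \to \Gamma \xrightarrow{\pi} G \to 1$; the central $\IZ$ makes $C^\ast_{\max}\Gamma$ a $C(S^1)$-algebra over $S^1 = \widehat{\IZ}$ whose fibre over $1$ is $C^\ast_{\max}G$ and whose generic fibres are twisted group algebras $C^\ast_{\max}(G,\omega_z)$, $\omega$ being the multiplier of the extension. The idea is to lift $\mu_{\max}(h)$ along this field: a set-theoretic section $G \to \Gamma$ makes $G$ act projectively on $C^\ast_{\max}\Gamma$, so one forms the associated twisted Mishchenko bundle of $C^\ast_{\max}\Gamma$-modules over $BG$, twists $D$ by its pullback to $M$, and obtains a class in $K_*(C^\ast_{\max}\Gamma)$ that restricts over $1 \in S^1$ to $\mu_{\max}(h)$. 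Pairing this family class with the degree-two cyclic cocycle measuring variation in the $S^1$-direction extracts the pairing with $\beta$; iterating over the degree-two factors of $c$ peels them off one at a time --- each step passing to a further central extension and introducing one more twist and $S^1$-parameter, while one keeps track of how the remaining factors of $c$ transform --- and combining with the torus construction yields a higher index theorem $\langle \tau_c, \mu_{\max}(h)\rangle = \langle c, \ch(h)\rangle$ for a cyclic cocycle $\tau_c$ on a dense subalgebra of $C^\ast_{\max}G$ containing $\IC G$. Since $\langle c, \ch(h)\rangle \neq 0$ and $\tau_c$ is real-valued, $\mu_{\max}(h)$ has infinite order, i.e.\ is nonzero in $K_*(C^\ast_{\max}G)\otimes\IR$.

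The step I expect to be the genuine obstacle is making the degree-two construction rigorous. One must ensure that the relevant cyclic $2$-cocycle actually pairs with $\mu_{\max}(h)$ --- equivalently, that it extends continuously to a holomorphically closed dense subalgebra of $C^\ast_{\max}G$, or is realized through honest $\ast$-homomorphisms of $C^\ast_{\max}G$ into $C(S^1)$-algebras of twisted group algebras --- and that the multiplier $\omega_z$ is absorbed compatibly enough that the family index is well defined and its variation in $z$ is computed by $\beta$. Unlike the reduced case there is no rapid-decay hypothesis to rely on; what makes it work is precisely that $c$ has degree $\leq 2$, so that the twists occurring are $1$- and $2$-cocycles which the central-extension/family device absorbs. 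This is exactly the sort of Fell-type absorption principle the paper establishes, and it is also the property that must be re-examined when $C^\ast_{\max}G$ is replaced by a general exotic group C*-algebra.
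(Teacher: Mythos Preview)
Your route is genuinely different from the one the paper recounts. The Hanke--Schick argument, as recapped in Sections~\ref{subsec_original_proof}--\ref{2classesinfinite}, does not use central extensions, twisted group algebras, or cyclic cocycles at all. Instead it shows that the relevant $K$-homology class has \emph{infinite $K$-area}: from a degree-$2$ class $c$ one builds a sequence of Hilbert $A_{1/k}$-module bundles $E_{1/k}\to M$ whose holonomy is $1/k$-close to the identity (by scaling a connection on the line bundle classified by $c$), with $\langle [E_{1/k}],h\rangle\neq 0$ witnessed by an explicit trace $\tau_{1/k}$. One then forms $A=\prod_k A_{1/k}$, the quotient $Q=A/\bigoplus_k A_{1/k}$, and a bundle $V\to M$ assembled from the $E_{1/k}$; the quotient bundle $W=V\otimes_\psi Q$ is \emph{flat}, so its holonomy yields a $\ast$-homomorphism $\phi\colon C^*_{\max}G\to Q$. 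A short-exact-sequence/diagonal argument shows $\psi_*\langle[V],h\rangle\neq 0$ in $K_0(Q)$, and commutativity of Diagram~\eqref{eq_diagram_hanke} forces $\alpha(h)\neq 0$. No pairing with a cyclic cocycle on a dense subalgebra is ever needed; the detection happens through honest $\ast$-homomorphisms into auxiliary $C^\ast$-algebras.

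What you outline is much closer to the Mathai and Connes--Gromov--Moscovici proofs the paper cites as alternatives. Your degree-one reduction via $\Hom(G,\IZ)$ and tori is fine, and the degree-two step via central extensions and $C(S^1)$-fields of twisted group $C^\ast$-algebras is essentially Mathai's device. The advantage of the Hanke--Schick approach is precisely that it bypasses the extension problem you yourself flag: one never needs a cyclic cocycle to extend to a holomorphically closed subalgebra, because detection happens through $\ast$-homomorphisms to $Q$ and ordinary traces on the $A_{1/k}$. Conversely, your approach makes the role of the class $c$ more transparent and handles degree-one factors very cleanly.

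One correction: the Fell absorption principle proved here (Lemma~\ref{lem_strong_fell}) is \emph{not} about absorbing the $2$-cocycle of a central extension. It says that for a correspondence crossed product functor the coproduct extends to a map $C^*_\mu G\to C^*_{\max}G\otimes_\mu C^*_\mu G$; its role is to rewrite the Hanke--Schick diagram with $\otimes_\mu$ in place of $\otimes_{\max}$, not to absorb projective multipliers. So the last sentence of your proposal misidentifies the mechanism the paper actually uses.
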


Because the rational injectivity of the assembly map is known as the strong Novikov conjecture, their result states that the strong Novikov conjecture is true for those $K$-homology classes which can be detected by low degree cohomology classes.

Note that the strong Novikov conjecture firstly implies the classical Novikov conjecture about homotopy invariance of higher signatures, and secondly provides obstructions (the higher $\hat{A}$-genera) to the existence of positive scalar curvature metrics on manifolds. 
The Novikov conjecture for low degree cohomology classes was proven with different methods by Connes, Gromov and Moscovici \cite{MR1204787} and by Mathai \cite{MR1998926}.

If $G$ is discrete and torsion free, the Baum--Connes conjecture states that the analytic assembly map $K_*(BG) \to K_*(C^\ast_r G)$ is an isomorphism. Note importantly, that on the right hand side we use the reduced group C*-algebra of $G$. Therefore the Baum--Connes conjecture, which is known for many groups, predicts that in the above theorem of Hanke and Schick we should be able to put the reduced, instead of maximal, group C*-algebra on the right hand side. 
This idea is supported by the results in \cite{MR3419768,AAS3} where it is shown that the image of the analytic assembly map is related to the $\tau$-part of the $K$-theory of the reduced group C*-algebra. On the other hand, the $\tau$-part of the $K$-theory for the reduced and the maximal group C*-algebras are canonically identified.

Recently a new version of the Baum--Connes conjecture (with coefficients) was formulated in \cite{BGW} using a new crossed product, the so-called minimal exact and (strongly) Morita compatible crossed product functor. This is a functor from the category of $G$-actions on $C^*$-algebras $A$ that assigns to each such $C^*$-algebra a crossed product $A\rtimes_\epsilon G$ lying between the maximal and reduced crossed products. This functor, moreover, preserves exact sequences and Morita equivalences. The Baum--Connes conjecture with coefficients then states that a certain assembly map
$$K^\Top_*(G;A)\to K_*(A\rtimes_\epsilon G)$$
is an isomorphism. The original Baum--Connes conjecture with coefficients states that a similar assembly map is an isomorphism for the reduced crossed product $A\rtimes_r G$ in place of $A\rtimes_\epsilon G$. Unfortunately this original version is false in general due to exactness obstructions \cite{MR1911663}, as opposed to the new one where no counter-examples are known.

The functor $A\mapsto A\rtimes_\epsilon G$ applied to $A=\IC$ gives in particular a group $C^*$-algebra $C^*_\epsilon(G)$ that should play an important role. If the new Baum--Connes conjecture is true for $G$, then it yields an isomorphism $K_*(BG)\to K_*(C^*_\epsilon(G))$ whenever $G$ is a torsion-free discrete group.

All this indicates that a version of Theorem~\ref{hanke_schick_intro} should hold with $C^*_\epsilon(G)$ in place of the full group $C^*$-algebra $C^*_{\max}(G)$. The main goal of this paper is to confirm that this is in fact true. To be more precise, let $G$ be a finitely presented group and, as before, denote by $\Lambda^\ast(G) \subset H^\ast(BG;\IQ)$ the subring generated by $H^{\le 2}(BG;\IQ)$, the cohomology classes of degree at most two.

\begin{thmintro}\label{main_thm_intro}
Let $h \in K_*(BG)$ such that there is $c \in \Lambda^\ast(G)$ with $\langle c,\ch(h)\rangle \not= 0$.

Then $h$ is not mapped to zero under the assembly map $K_*(BG) \to K_*(C^\ast_\epsilon G) \otimes \IR$.
\end{thmintro}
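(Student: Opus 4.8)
The plan is to run the argument of Hanke and Schick proving Theorem~\ref{hanke_schick_intro} essentially verbatim, isolating the one point at which the maximality of $C^\ast_{\max}G$ is used and replacing it by the Fell absorption principle for the exotic functor $\rtimes_\epsilon$. First, by bilinearity of $\langle-,-\rangle$ and since $\Lambda^\ast(G)$ is generated as a ring by $H^{\le 2}(BG;\IQ)$, we may assume $c=c_1\smile\cdots\smile c_m$ is a cup product of classes of degree $\le 2$, each integral after rescaling and none of degree $0$. Finite presentability of $G$ provides a model of $BG$ with finite $2$-skeleton, and as $h$ and the finitely many $c_i$ live over a finite subcomplex $X\subseteq BG$, we work over $X$, using naturality of $\ch$ and of the assembly maps under $X\hookrightarrow BG$ together with the commuting square relating $\mu_{\max}$ and $\mu_\epsilon$.

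\emph{The detecting invariant.} Following \cite{hanke_schick}, encode the degree-one factors of $c$ by a homomorphism $\phi\colon G\to\IZ^{a}$ and the degree-two factors by a central extension $1\to\IZ^{b}\to\Gamma\to G\to 1$ whose Euler class is the given tuple of degree-two classes, equivalently by a $\IZ^b$-valued $2$-cocycle $\sigma$ on $G$. The degree-one part is harmless: $\IZ^a$ is amenable, so all its crossed-product completions coincide, and functoriality of $\rtimes_\epsilon$ together with naturality of $\mu$ yields a map $C^\ast_\epsilon(G)\to C^\ast(\IZ^a)$ compatible with assembly. For the degree-two part let $\mathfrak A$ be the twisted group $C^\ast$-algebra of $(G,\sigma)$ in a suitable completion; it carries the canonical trace together with the $b$ commuting derivations dual to the $\IZ^b$-directions, which assemble into a cyclic cocycle $\chi$ of degree $2b$, and this trace, hence $\chi$, factors through the reduced completion. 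The $\sigma$-twisted Mishchenko bundle $W$ over $X$ is a flat finitely generated projective Hilbert-$\mathfrak A$-module bundle whose monodromy is the $\Ad$-action of a $\sigma$-projective unitary representation $\pi$ of $G$ on $\mathcal K(\IH_\pi)$; for $h$ represented by a geometric cycle $(M,E,f)$, the Mishchenko--Fomenko index of $D_E$ twisted by $f^\ast W$ (and by the $C^\ast(\IZ^a)$-twisted Mishchenko bundle attached to $\phi$ for the degree-one factors) defines a class in $K_\ast(\mathfrak A)$ whose pairing with $\chi$ is, by the index theorem, a fixed nonzero rational multiple of $\langle c,\ch(h)\rangle$. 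This is the Hanke--Schick detecting functional, a homomorphism on $K_\ast(C^\ast_{\max}G)\otimes\IR$ with nonzero value on $\mu_{\max}(h)$.

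\emph{Descent to $C^\ast_\epsilon$.} The construction just recalled is built from a (projective) representation of $G$ and hence a priori produces only a homomorphism out of $C^\ast_{\max}G$; equivalently, the bundle $W$, being classified by such a representation, naturally lives over the maximal completion, and this is the only place where maximality is used. The Fell absorption principle for $\rtimes_\epsilon$ removes it: for a finite-dimensional (more generally, any) unitary representation $\pi$ of $G$, the representation obtained by twisting the $\epsilon$-regular representation by $\pi$ still factors through $C^\ast_\epsilon(G)$ — being, in the genuine case, a multiple of the $\epsilon$-regular one — and the twisted analogue holds for $\sigma$-projective $\pi$ once one also uses exactness of $\rtimes_\epsilon$ to realise the $\sigma$-twisted exotic group $C^\ast$-algebra as a quotient of $C^\ast_\epsilon(\Gamma)$. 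Conceptually this is strong Morita compatibility: $\Ad \pi$ on $\mathcal K(\IH_\pi)$ is implemented by the multipliers $\pi_g\in M(\mathcal K(\IH_\pi))$, hence exterior equivalent, after the usual stabilisation, to the trivial action twisted by $\mathfrak A$, so that $\mathcal K(\IH_\pi)\rtimes_{\Ad \pi,\,\epsilon}G\cong\mathcal K(\IH_\pi)\otimes C^\ast_\epsilon(G,\sigma)$. Consequently the detecting functional of Hanke and Schick, re-expressed with the exotic completions and composed with the canonical surjection $C^\ast_{\max}G\onto C^\ast_\epsilon(G)$, factors through a homomorphism $K_\ast(C^\ast_\epsilon(G))\otimes\IR\to\IR$, and the index computation is unchanged because the traces and cyclic cocycles involved live on the reduced completions and are therefore insensitive to the replacement. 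Its value on $\mu_\epsilon(h)$ is the same nonzero multiple of $\langle c,\ch(h)\rangle$, whence $\mu_\epsilon(h)\ne 0$.

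\emph{The main obstacle.} Everything outside the previous paragraph is \cite{hanke_schick}; the crux is the Fell absorption principle for the exotic functor $\rtimes_\epsilon$ — equivalently, that exterior equivalent $G$-actions have canonically isomorphic $\epsilon$-crossed products, so that twisting by a (possibly projective) representation is invisible to $C^\ast_\epsilon$ — which is exactly why $\rtimes_\epsilon$ must be strongly Morita compatible, with exactness entering to turn the twisted group $C^\ast$-algebras into quotients of the $C^\ast$-algebras of the central extensions. (For exact groups $G$ the same argument runs with $C^\ast_r$ in place of $C^\ast_\epsilon$; in general the reduced completion does not suffice because the $\sigma$-twisted regular representation need not be weakly contained in the regular representation of $G$.) The remaining care concerns naturality of the Baum--Connes assembly map with respect to the central extension $\Gamma\to G$ and the $\Ad$-twists, and the bookkeeping ensuring that the rational constant relating the index pairing to $\langle c,\ch(h)\rangle$ stays nonzero; both are handled exactly as in \cite{hanke_schick}.
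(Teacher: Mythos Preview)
Your outline diverges substantially from the paper's argument, and what you describe is not the proof in \cite{hanke_schick}. The Hanke--Schick approach, which the paper extends, does not use central extensions, twisted group $C^*$-algebras, or a cyclic cocycle of degree $2b$ built from derivations. Instead one represents $h$ geometrically by a spin manifold $M$ with $\pi_1(M)\cong G$, constructs a sequence of holonomy $C^*$-algebras $A_{1/k}\subset\IB(\ell^2 G)$ from connections on $\widetilde M\times_G\ell^2(G)$ with curvature tending to zero, forms $A=\prod A_{1/k}$ and $Q=A/\bigoplus A_{1/k}$, and detects via the sequence of vector-state traces $\tau_{1/k}$, landing in $\prod\IR/\bigoplus_{\alg}\IR$ rather than in a single copy of $\IR$. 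The paper then enlarges Hanke's commutative diagram by inserting the coproduct $\Delta\colon C^*_\mu G\to C^*_{\max}G\otimes_\mu C^*_\mu G$ and tensoring the subsequent maps by $C^*_\mu G$; exactness of $-\otimes_\mu C^*_\mu G$ is used to make $A'\otimes_\mu C^*_\mu G\hookrightarrow A\otimes_\mu C^*_\mu G\twoheadrightarrow Q\otimes_\mu C^*_\mu G$ short exact, which is what permits construction of the detecting map on $K_0(Q\otimes_\mu C^*_\mu G)$ --- a different use of exactness from yours. What you sketch is closer in spirit to Mathai's approach \cite{MR1998926}.

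The more serious issue is that your descent step is precisely where the entire difficulty lies, and you have not addressed it. You assert that after Fell-absorbing the (projective) representation $\pi$, ``the index computation is unchanged because the traces and cyclic cocycles involved live on the reduced completions''. But after applying $\Delta$ and $\phi\otimes\id$, the index lives in $K_0(\mathfrak A\otimes_\epsilon C^*_\epsilon G)$ (in the paper, $K_0(Q\otimes_\mu C^*_\mu G)$), and one must now pair with $\chi\otimes\tau_e$ rather than with $\chi$ alone; the claim that this gives the same value as the original pairing is exactly the content of the paper's Section~\ref{subsec_commutativity_diag}, and it is \emph{not} automatic --- the paper explains there why the naive diagram fails for a generic trace. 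In the paper's setting commutativity holds only because the traces $\tau_{1/k}$ already vanish on parallel transport along homotopically non-trivial loops (Property~\ref{prop_traces_supported_on_e}), so that tensoring with $\tau_e$ and precomposing with $\Delta$ introduces no further cancellation. The analogue in your framework --- that $\chi\otimes\tau_e$ precomposed with $\Delta$ recovers $\chi$ --- would require a comparable explicit computation, complicated by the fact that $\chi$ involves unbounded derivations and is only defined on a smooth subalgebra; the mere fact that $\chi$ ``lives on the reduced completion'' does not settle this.
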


In order to prove Theorem~\ref{main_thm_intro} we will use the machinery of exotic crossed products developed in several papers, \cite{BGW,bew2,bew1,bew3}.  Notice that $C^*_\epsilon(G)=C^*_r(G)$ if $G$ is exact. It is plausible that this is indeed true for every group $G$ as was stated in \cite{bew3}. Unfortunately there is a gap in one of the proofs of that statement and as a consequence this was left open (see the erratum in the appendix of arXiv version~3 of \cite{bew3}). On the other hand, as already remarked in \cite{BGW}, the group $C^*$-algebra $C^*_\epsilon(G)$ is never equal to $C^*_{\max}(G)$ unless $G$ is amenable. In this sense our Theorem~\ref{main_thm_intro} really improves Hanke--Schick's Theorem~\ref{hanke_schick_intro}.

We have written the proof of Theorem~\ref{main_thm_intro} in such a way that it may be read without first reading Section~\ref{sec_exotic_crossed_prod} on exotic crossed products. Moreover, our proof applies to the group algebra $C^*_\mu(G)$ of every exact correspondence crossed product functor $A\mapsto A\rtimes_\mu G$. Therefore Theorem~\ref{main_thm_intro} holds for every exotic group $C^*$-algebra $C^*_\mu(G)$ above $C^*_\epsilon(G)$, that is, one for which the identity on $G$ extends to a surjection $C^*_\mu(G)\to C^*_\epsilon(G)$. Everything one needs to know about the kind of exotic crossed products we are using in the proof of Theorem~\ref{main_thm_intro} is summarized in Properties~\ref{properties_crossed_product}.

The main technical result that we will prove in Section~\ref{sec_exotic_crossed_prod} is a Fell absorption principle for exotic crossed products (Lemma~\ref{lem_strong_fell}). A consequence of it is the following result\footnote{We will prove this result for all locally compact groups, but here we only state it for discrete groups for simplicity.}:

\begin{thmintro}\label{main_thm_intro_fell}
Let $-\rtimes_\mu \, G$ be a correspondence crossed product functor.

Then the coproduct $\Delta\colon \IC G \to \IC G \odot \IC G$, $\sum a_g g \mapsto \sum a_g (g \otimes g)$ extends continuously to a $^*$-homomorphism
\[\Delta\colon C_\mu^* G \to C_{\max}^* G \otimes_\mu C_\mu^* G.\]
\end{thmintro}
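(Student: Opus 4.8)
The plan is to reduce the statement to the Fell absorption principle for correspondence crossed product functors (Lemma~\ref{lem_strong_fell}), applied to a suitably chosen $G$-$C^*$-dynamical system. First I would recall that the group $C^*$-algebra $C^*_\mu G$ is by definition $\IC \rtimes_\mu G$, the image of $\IC \rtimes G$ under the correspondence functor applied to the trivial action on $\IC$. The key observation is that the coproduct $\Delta$ on the group ring $\IC G$ is, up to the natural identifications, the comultiplication map whose continuity on the various completions is exactly what a Fell-type absorption statement controls: intuitively, tensoring the regular-type representation on the first leg with any $\mu$-representation on the second leg should still be dominated by a $\mu$-representation, because $C^*_{\max} G$ is "absorbing from the left" for the relevant tensor norms. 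So the skeleton is: (i) phrase $\Delta$ as a $^*$-homomorphism defined on the dense $^*$-subalgebra $\IC G \subset C^*_\mu G$; (ii) show it lands densely in $C^*_{\max} G \otimes_\mu C^*_\mu G$; (iii) verify the required norm estimate $\|\Delta(x)\|_{C^*_{\max} G \otimes_\mu C^*_\mu G} \le \|x\|_{C^*_\mu G}$ for $x \in \IC G$, which is the only non-formal point and is supplied by Lemma~\ref{lem_strong_fell}.

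For step (iii), I would apply the Fell absorption principle in the following form. Given a $\mu$-representation $\pi$ of $C^*_\mu G$ on a Hilbert space $\IH$, consider the representation of $\IC G$ on $\ell^2(G) \otimes \IH$ given by $g \mapsto \lambda_g \otimes \pi(g)$, where $\lambda$ is the left regular representation. The classical Fell absorption trick says $\lambda \otimes \pi \cong \lambda \otimes 1$ (unitarily equivalent, via the unitary $W$ on $\ell^2(G)\otimes\IH$, $W(\delta_g \otimes \xi) = \delta_g \otimes \pi(g)\xi$), and hence $\lambda \otimes \pi$ is weakly contained in $\lambda$, i.e.\ factors through $C^*_r G$. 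The point of the \emph{exotic} Fell absorption principle of Lemma~\ref{lem_strong_fell} is the refined statement that for a correspondence functor $-\rtimes_\mu G$, any $\mu$-representation $\sigma$ in the first leg tensored with any $\mu$-representation $\pi$ in the second leg is again dominated by a $\mu$-representation; replacing $\sigma$ by the universal representation $C^*_{\max} G$ one gets that $\sigma \otimes \pi$ extends to $C^*_{\max} G \otimes_\mu C^*_\mu G$, and the composite $\IC G \xrightarrow{\Delta} \IC G \odot \IC G \xrightarrow{\sigma \otimes \pi}$ is exactly the representation $g \mapsto \sigma(g) \otimes \pi(g)$, which must therefore be a $\mu$-representation of $C^*_\mu G$. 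Taking the supremum of $\|(\sigma\otimes\pi)(\Delta(x))\|$ over all such $\pi$ (with $\sigma$ the maximal one) on the left, and recognizing the supremum over $\mu$-representations $g \mapsto \sigma(g)\otimes\pi(g)$ on the right, yields precisely the contractivity of $\Delta$ with respect to $\|\cdot\|_{C^*_\mu G}$.

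Concretely, I expect to invoke Lemma~\ref{lem_strong_fell} with the $G$-$C^*$-algebra $A = C^*_{\max} G$ equipped with the $G$-action by conjugation by the universal unitaries (or, more cleanly, the trivial $G$-algebra $\IC$ in the second slot and $C^*_{\max}G$ absorbed in the first via the correspondence axiom), so that the functoriality and the correspondence property give a $^*$-homomorphism $C^*_\mu G \to C^*_{\max} G \otimes_\mu C^*_\mu G$ extending the map on $\IC G$. Then I would check on generators that this extension agrees with $\Delta$ on $g \in G$, which is immediate since both send $g \mapsto g \otimes g$, and density of $\IC G$ in $C^*_\mu G$ finishes the argument.

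The main obstacle is step (iii): making precise that the correspondence-functor hypothesis is exactly strong enough to allow the first tensor leg to be enlarged from $C^*_\mu G$ (or $C^*_r G$) all the way up to $C^*_{\max} G$ while keeping the diagonal representation $g \mapsto \sigma(g)\otimes\pi(g)$ a $\mu$-representation. This is not the classical Fell trick (which only produces weak containment in $\lambda$, giving a map to $C^*_r G \otimes \dots$), but rather its exotic refinement, and it is precisely the content of Lemma~\ref{lem_strong_fell}; everything else here is bookkeeping with dense subalgebras and universal properties. A secondary point to be careful about is which tensor norm $\otimes_\mu$ appears on the target: it must be the one induced by the crossed product functor $\mu$ via $C^*_{\max} G \otimes_\mu C^*_\mu G := (C^*_\mu G) \rtimes_\mu G$ under the identification of $C^*_\mu G$ with a $G$-algebra carrying the translation action, and one has to confirm that Lemma~\ref{lem_strong_fell} is stated with exactly this identification so that no mismatch of completions occurs.
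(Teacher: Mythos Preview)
Your approach is essentially the same as the paper's: the paper simply remarks that Theorem~\ref{main_thm_intro_fell} is a direct consequence of Lemma~\ref{lem_strong_fell}, and indeed applying that lemma with $A=\IC$ (trivial $G$-action) gives a $^*$-homomorphism $C^*_\mu G = \IC\rtimes_\mu G \to \mathcal{M}(C^*_{\max}G \otimes_\mu C^*_\mu G)$ which on $g\in G$ sends $g\mapsto \iota_G(g)\otimes u_{G,\mu}(g)=g\otimes g$; for $G$ discrete the target is unital so the multiplier algebra is the algebra itself, and you are done. Your detour through Hilbert-space representations, suprema over $\mu$-representations, and the classical Fell unitary $W$ is unnecessary motivation --- the actual argument is just ``set $A=\IC$ in the lemma and look at generators'', exactly as you say in your penultimate paragraph.

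One point to correct: your last sentence misidentifies the tensor product. By definition (see the line before Proposition~\ref{prop_is_tensor_product}) one has $B\otimes_\mu C^*_\mu G \coloneqq B\rtimes_{\mu,\operatorname{id}} G$ for $B$ with the \emph{trivial} $G$-action, so $C^*_{\max}G\otimes_\mu C^*_\mu G = (C^*_{\max}G)\rtimes_{\mu,\operatorname{id}} G$, not $(C^*_\mu G)\rtimes_\mu G$ with a translation action. This is exactly the identification under which Lemma~\ref{lem_strong_fell} is stated (the target there is $\mathcal{M}((A\rtimes_{\max}G)\otimes_\mu C^*_\mu G)$), so once you use the correct definition there is no mismatch of completions to worry about. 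Similarly, the alternative choice $A=C^*_{\max}G$ with the conjugation action is workable but less clean: the paper's proof of Lemma~\ref{lem_strong_fell} already passes through $(A\rtimes_{\max}G,\Ad(\iota_G))$ internally, so feeding that in again as $A$ just adds a redundant layer.
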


It was already known that the coproduct extends to $C^*_\mu(G)\to C^*_{\max}(G)\otimes_{\min} C^*_\mu(G)$ (see \cite[Cor.~3.13]{KLQ-Exotic}). But the fact that we are able to lift it to a map where we use a different tensor product $-\otimes_\mu C^*_\mu(G)$ is a crucial ingredient in our proof of Theorem~\ref{main_thm_intro} as this tensor product enjoys properties from the crossed product functor $-\rtimes_\mu G$.
In particular, this applies to the group algebra $C^*_\epsilon(G)$ of the minimal exact and strongly Morita compatible crossed product functor $A\mapsto A\rtimes_\epsilon G$. The fact that this functor is exact is actually equivalent to the exactness of the associated tensor product functor $A\mapsto A\otimes_\epsilon C^*_\epsilon(G)$.

\subsection*{Acknowledgements} We thank Sara Azzali, Siegfried Echterhoff and Rufus Willett for helpful discussions. We also thank the anonymous referee for his or her comments.

P.A.\ wishes to thank the International School for Advanced Studies, SISSA where he held a postdoctoral position while the paper was written.

A.B.\ is supported by Capes-Humboldt and CNPq - Brazil.

A.E.\ acknowledges support by the Priority Programme SPP 2026 \emph{Geometry at Infinity} (EN 1163/3-1, \emph{Duality and the coarse assembly map}) and the SFB 1085 \emph{Higher Invariants}, both funded by the DFG.

T.S.\ is supported by the DFG under Germany's Excellence Strategy - EXC 2044 - 390685587, Mathematics Münster: Dynamics – Geometry - Structure and by SFB 878.

\section{Exotic crossed products}\label{sec_exotic_crossed_prod}
In this section we describe and prove some properties of exotic crossed products. Even though, in applications to the Novikov conjecture only discrete groups are involved, we present these facts in full generality. 
Let $G$ be a locally compact group with a fixed (left invariant) Haar measure (which we simply write as $ds$ in integrals) and let $\Delta$
the associated modular function on $G$.
\begin{itemize}
\item We let $\CstarAlg(G)$ (resp. $\mathbf{CP}(G)$)
denote the category
of $G$-C*-algebras with $G$-equivariant *-homomorphisms (resp., $G$-equivariant
completely positive maps) as morphisms.
\item For $\CstarAlg(\{e\})$ (resp. $\mathbf{CP}(\{e\})$), where $e$ denotes the trivial group, we also
write $\CstarAlg$ (resp. $\mathbf{CP}$).
\item Further, let $\starAlg$ denote the
category of involutive algebras with *-homomorphisms as morphisms.
\end{itemize}

\pagebreak[2]
\begin{defn}[Crossed products]\mbox{}
\begin{enumerate}
\item By $-\rtimes_{\mathrm{alg}}G: \CstarAlg(G) \to \starAlg$ we denote the functor
mapping a $G$-C*-algebra $A$ with action $\alpha$ to
\begin{align*}
	A\rtimes_{\mathrm{alg}}G \coloneqq C_c(G,A)
\end{align*}
as a vector space equipped with the product
\begin{align*}
	(f\ast g)(t) \coloneqq \int f(s) \alpha_s(g(s^{-1}t))ds
\end{align*}
for $f,g\in A\rtimes_{\mathrm{alg}}G$ and $t\in G$, and the involution
\begin{align*}
	f^*(s) \coloneqq \Delta(s^{-1}) \alpha_s((f(s^{-1}))^*)
\end{align*}
for $f\in A\rtimes_{\mathrm{alg}}G$ and $s\in G$, and mapping a $G$-equivariant
*-homomorphism $\varphi\colon A \to B$ to the *-homo\-mor\-phism
\begin{align*}
	\varphi\rtimes_{\mathrm{alg}}G\colon A\rtimes_{\mathrm{alg}}G \to B\rtimes_{\mathrm{alg}}G, \, f \mapsto \varphi\circ f\,.
\end{align*}
\item By $-\rtimes_{\max} G\colon \CstarAlg(G) \to \CstarAlg$ we
denote the maximal crossed product functor which comes with a natural transformation
\[
(\kappa_A\colon A\rtimes_{\mathrm{alg}}G\to A\rtimes_{\max} G)_{A\in \CstarAlg(G)}
\]
consisting
of injective *-homomorphisms with dense image.
\item Finally, let $-\rtimes_r G \colon \CstarAlg(G) \to \CstarAlg$ be the
reduced crossed product functor.

There is a natural transformation
$\Lambda\colon -\rtimes_{\max} G \to -\rtimes_r G$ between the functors $-\rtimes_{\max} G$ and
$-\rtimes_r G$ consisting of surjective *-homomorphisms such that
$\Lambda\circ \kappa$ consists of injective *-homomorphisms (with dense image).\qedhere
\end{enumerate}
\end{defn}

A \emph{crossed product functor} $-\rtimes_\mu G$ is a functor
\begin{align*}
	-\rtimes_\mu G \colon \CstarAlg(G) \to \CstarAlg
\end{align*}
together with natural transformations
\[q\colon-\rtimes_{\max} G \to -\rtimes_\mu G \quad \text{ and } \quad s\colon-\rtimes_\mu G \to -\rtimes_r G\]
consisting of surjective *-homomorphisms such that
$s\circ q = \Lambda$. In particular, we have that $q\circ \kappa:-\rtimes_{\mathrm{alg}}G \to -\rtimes_\mu G$ is a natural assigment consisting
of injective *-homomorphisms with dense image. Therefore we can consider $A\rtimes_{\alg}G$ as
a *-subalgebra of $A\rtimes_\mu G$ for all $G$-C*-algebras~$A$.

We write $\mathcal{M}(A)$ for the multiplier algebra of a C*-algebra $A$. 
Let
$\varphi \colon A\to \mathcal{M}(B)$ be a nondegenerate *-homomorphism between a C*-algebra
$A$ and the multiplier algebra $\mathcal{M}(B)$ of a C*-algebra $B$. This means that $\varphi(A)B$ is dense in $B$. In this case there is a unique extension of $\varphi$ to a homomorphism on $\mathcal{M}(A)$; we denote it by $\overline{\varphi}:\mathcal{M}(A)\to \mathcal{M}(B)$.

If $-\rtimes_\mu G$ is a crossed product functor, then we typically write
$C_\mu^* G$ for $\mathbb{C}\rtimes_\mu G$. Furthermore, let $A$ be a $G$-C*-algebra and
let $(A\rtimes_{\max} G,\iota_A,\iota_G)$ be the maximal crossed product of $A$ together
with the universal covariant representation $\iota_A \colon A\to \mathcal{M}(A\rtimes_{\max} G)$
and $\iota_G \colon G \to \mathcal{M}(A\rtimes_{\max} G)$. Then
\begin{align*}
	\iota_{A,\mu}& \coloneqq \overline{q_A} \circ \iota_A \colon A \to \mathcal{M}(A\rtimes_\mu G)\\
	\iota_{G,\mu}& \coloneqq \overline{q_A}\circ \iota_G \colon G \to \mathcal{U}\mathcal{M}(A\rtimes_\mu G)
\end{align*}
is a covariant representation of $A$.

\subsection{Properties of correspondence crossed product functors}

\begin{lem}\label{lem-directsums}
Let $-\rtimes_\mu G$ be a crossed product functor.
Then $-\rtimes_\mu G$ preserves direct sums of C*-algebras. To be more precise, if $\{A_i\colon i\in I\}$ is any collection of $G$-C*-algebras,
then there is a canonical isomorphism
$$\big(\bigoplus_{i\in I} A_i\big)\rtimes_{\mu}G\cong \bigoplus_{i\in I} (A_i\rtimes_\mu G)\,.$$
\end{lem}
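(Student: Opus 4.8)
The plan is to construct an explicit isometric $*$-isomorphism between the two sides out of the functoriality of $-\rtimes_\mu G$. Write $A \coloneqq \bigoplus_{i \in I} A_i$, equipped with the diagonal $G$-action, and let $\iota_i \colon A_i \to A$ and $\pi_i \colon A \to A_i$ be the canonical inclusions and coordinate projections; these are morphisms in $\CstarAlg(G)$ satisfying $\pi_i \circ \iota_j = \delta_{ij}\id_{A_i}$. Functoriality gives $*$-homomorphisms $\iota_i \rtimes_\mu G$ and $\pi_i \rtimes_\mu G$ with $(\pi_i \rtimes_\mu G)\circ(\iota_j \rtimes_\mu G) = \delta_{ij}\id$, so each $\iota_i \rtimes_\mu G$ is a split monomorphism, hence isometric. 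I would first record that the ranges of the maps $\iota_i \rtimes_\mu G$ are mutually orthogonal: on the dense subalgebras $A_i \rtimes_\alg G = C_c(G,A_i)$ one has $(\iota_i \rtimes_\alg G)(f) = \iota_i \circ f$, and for $i \ne j$ the convolution product $(\iota_i \circ f)\ast(\iota_j \circ g)$ vanishes pointwise because $\iota_i(a)\iota_j(b)=0$ in $A$ and $\iota_j$ is $G$-equivariant. Together with the isometry of each $\iota_i \rtimes_\mu G$ this yields, for any finite $F \subseteq I$ and $x_i \in A_i \rtimes_\mu G$, the identity $\|\sum_{i \in F}(\iota_i \rtimes_\mu G)(x_i)\| = \max_{i \in F}\|x_i\|$; hence $J\big((x_i)_i\big) \coloneqq \sum_i (\iota_i \rtimes_\mu G)(x_i)$ is a well-defined isometric $*$-homomorphism $J \colon \bigoplus_i (A_i \rtimes_\mu G) \to A \rtimes_\mu G$.

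It remains to show $J$ is onto; being isometric it has closed image, so it is enough to see the image is dense. For a finite $F \subseteq I$ write $P_F \coloneqq \sum_{i \in F}\iota_i \circ \pi_i \colon A \to A$ for the associated norm-decreasing equivariant projection. Given $f \in C_c(G,A)$, the function $P_F \circ f$ lies in $C_c(G,\bigoplus_{i\in F}A_i)$, so $\kappa_A(P_F \circ f) = \sum_{i \in F}(\iota_i \rtimes_\mu G)\big(\kappa_{A_i}(\pi_i \circ f)\big)$ belongs to the image of $J$. Moreover
\[
\|f - P_F \circ f\|_{A \rtimes_\mu G} \le \|f - P_F \circ f\|_{L^1} = \int_G \sup_{i \notin F}\|\pi_i(f(s))\|\,ds \le \vol(\supp f)\cdot \sup_{a \in f(G)}\ \sup_{i \notin F}\|\pi_i(a)\|,
\]
using $\|\cdot\|_\mu \le \|\cdot\|_{\max}\le\|\cdot\|_{L^1}$. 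Since $f(G)$ is compact in $A$, a finite $\varepsilon$-net argument shows that the rightmost supremum tends to $0$ as $F$ increases, so $\kappa_A(P_F \circ f) \to \kappa_A(f)$. As the elements $\kappa_A(f)$, $f \in C_c(G,A)$, are dense in $A \rtimes_\mu G$, the image of $J$ is dense; thus $J$ is the asserted (and manifestly natural) isomorphism.

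The genuinely routine points are the pointwise computation of the orthogonality and the $c_0$-type norm identity, the latter being the standard fact that a finite sum of mutually orthogonal positive elements of a C*-algebra has norm the maximum of their norms. I expect the main obstacle to be the surjectivity of $J$: one must argue that a compactly supported continuous $A$-valued function, whose values need not be concentrated in finitely many summands, is nevertheless approximated in the crossed-product norm by its finite-coordinate truncations — this is exactly where the compactness of $\supp f$ and the crude bound $\|\cdot\|_{A\rtimes_\mu G} \le \|\cdot\|_{L^1}$ do the work. Should one prefer to bypass the construction of $J$, an alternative is to use the surjection $q_A \colon A \rtimes_{\max} G \to A \rtimes_\mu G$ and the known fact that $-\rtimes_{\max}G$ preserves direct sums: naturality of $q$ identifies $\ker q_A$ with $\bigoplus_i \ker q_{A_i}$ inside $\bigoplus_i (A_i \rtimes_{\max} G) \cong A \rtimes_{\max} G$, and dividing out gives $A \rtimes_\mu G \cong \bigoplus_i (A_i \rtimes_\mu G)$.
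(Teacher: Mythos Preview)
Your proof is correct and follows essentially the same approach as the paper: both use the equivariant inclusions $\iota_i$ and projections $\pi_i$ to build the map $J$ and to verify injectivity (the paper phrases injectivity via the composite $A\rtimes_\mu G \to \prod_i(A_i\rtimes_\mu G)$ rather than via your isometry argument, but the content is identical). The paper dismisses surjectivity as ``clearly also surjective'', whereas your $L^1$-bound and compactness argument actually supplies the missing detail in the non-discrete case, so if anything your write-up is more complete on that point.
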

\begin{proof}
Let $A$ be the direct sum $\bigoplus_i A_i$. This is the universal $G$-C*-algebra generated by orthogonal
 copies of $A_i$ as ideals. The $G$-equivariant inclusion $\phi_i\colon A_i\to A$ then lifts to a *-homomorphism $\phi_i\rtimes_\mu G\colon A_i\rtimes_\mu G\to A\rtimes_\mu G$ and the images of 
 these maps are mutually orthogonal, so we get a well-defined *-homomorphism $\phi\colon \bigoplus_i (A_i\rtimes_\mu G)\to A\rtimes_\mu G$, which is clearly also surjective. It is also injective because the canonical $G$-equivariant projections 
$\psi_i\colon A\to A_i$ by functoriality yield *-homomorphisms
 $\psi_i\rtimes_\mu G\colon A\rtimes_\mu G\to A_i\rtimes_\mu G$ that therefore give a 
 *-homomorphism $\psi\colon A\rtimes_\mu G\to \prod_i (A_i\rtimes_\mu G)$ such that $\psi\circ\phi$ equals the canonical embedding $\bigoplus_i (A_i\rtimes_\mu G)\to \prod_i (A_i\rtimes_\mu G)$.
\end{proof}

We are going to use a class of crossed products which is well behaved with respect to completely positive maps. These are proven in \cite{bew1} to be exactly the \emph{correspondence crossed products}, i.e. the crossed products which are functorial for 
correspondences defined as bimodules in the sense of Kasparov. Indeed they allow for the construction of a descent morphism in equivariant $K\!K$-theory. Among several equivalent definitions (cf.~\cite[Thm.~4.9]{bew1}) we recall the one related to completely postive maps.

\begin{defn}
	A crossed product functor $-\rtimes_\mu G$ has the \emph{cp-map property}
	(or equivalent, is a correspondence crossed product functor), if $-\rtimes_\mu G$ extends 
	to a functor
	\[-\rtimes_\mu G\colon\mathbf{CP}(G) \to \mathbf{CP}\]
	in the following sense:
	
	For all $G$-C*-algebras $A$ and $B$ and every $G$-equivariant completely positive map
	$\varphi\colon A \to B$, there is a completely positive map $\varphi\rtimes_\mu G\colon
	A\rtimes_\mu G \to B\rtimes_\mu G$ 
	determined by 
	 $(\varphi\rtimes_\mu G)(f) = \varphi\circ f$
	for all $f\in A\rtimes_{\alg}G$.
\end{defn}

The next lemma is a direct combination of Theorem 4.9 and Lemma 3.3 in \cite{bew1} (taking into account the implication ``hereditary subalgebra property $\Rightarrow$ ideal property'').
\begin{lem}\label{pre:propCCP}
	Every correspondence crossed product functor $-\rtimes_\mu\, G$ is
	 \emph{functorial for generalised homomorphisms}. In other words, for all
	$G$-C*-algebras $A$ and $B$ and every $G$-equivariant *-homomorphism
	$\phi: A \to \mathcal{M}(B)$, there exists a *-homomorphism
	\[\phi\rtimes_\mu G\colon A\rtimes_\mu G \to \mathcal{M}(B\rtimes_\mu G)\] given by
	\[(\phi\rtimes_\mu G) (f) g = (\phi \circ f)\ast g\]
	for all $f\in A\rtimes_{alg}G$
	and $g\in B\rtimes_{\alg} G$.
\end{lem}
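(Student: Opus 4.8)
The plan is to reduce Lemma~\ref{pre:propCCP} to ordinary functoriality of $-\rtimes_\mu G$ applied to a genuine morphism in $\CstarAlg(G)$, together with the ideal property of correspondence crossed products. First I would enlarge the target to a $G$-C*-algebra: set $C:=\phi(A)+B\subseteq\mathcal M(B)$. Since $B$ is an ideal of $\mathcal M(B)$, one checks directly that $C$ is a $^*$-subalgebra, and as $C$ is the preimage under the quotient $\mathcal M(B)\to\mathcal M(B)/B$ of the image of $\phi(A)$ in the corona algebra (which is closed, being the image of a C*-algebra under a $^*$-homomorphism), $C$ is a C*-subalgebra of $\mathcal M(B)$ that contains $B$ as a (closed, two-sided) ideal. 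Because $\phi$ is $G$-equivariant, $C$ is invariant under the $G$-action on $\mathcal M(B)$, and the restricted action is norm-continuous (it is continuous on $\phi(A)$ and on $B$ separately), so $C$ is a $G$-C*-algebra, $B\trianglelefteq C$ is a $G$-invariant ideal, and $\phi$ becomes a morphism $\phi\colon A\to C$ in $\CstarAlg(G)$.

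Next I would apply the functor $-\rtimes_\mu G$ to this morphism, obtaining a $^*$-homomorphism $\phi\rtimes_\mu G\colon A\rtimes_\mu G\to C\rtimes_\mu G$ which on the dense subalgebra $A\rtimes_{\alg}G=C_c(G,A)$ is given by $f\mapsto\phi\circ f$. The decisive input — and the only place where the correspondence (equivalently, cp-map) hypothesis is used — is that $B\rtimes_\mu G$ sits inside $C\rtimes_\mu G$ as a closed two-sided ideal; this is the ideal property of correspondence crossed product functors, which is exactly what \cite[Lem.~3.3]{bew1} provides via the implication ``hereditary subalgebra property $\Rightarrow$ ideal property'', given that \cite[Thm.~4.9]{bew1} guarantees $-\rtimes_\mu G$ enjoys that property. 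Composing $\phi\rtimes_\mu G$ with the canonical $^*$-homomorphism $C\rtimes_\mu G\to\mathcal M(B\rtimes_\mu G)$ attached to the ideal $B\rtimes_\mu G$ then yields the desired map $\phi\rtimes_\mu G\colon A\rtimes_\mu G\to\mathcal M(B\rtimes_\mu G)$.

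It remains to verify the asserted formula. For $f\in C_c(G,A)$ and $g\in C_c(G,B)$, the element $\phi\circ f\in C_c(G,C)\subseteq C\rtimes_\mu G$ multiplies $g$ by the convolution $(\phi\circ f)\ast g$, and since $((\phi\circ f)\ast g)(t)=\int\phi(f(s))\,\alpha_s(g(s^{-1}t))\,ds$ lies in $C_c(G,B)$ (using that $B$ is an ideal of $C$), this product is computed inside the ideal $B\rtimes_\mu G$. By construction of the canonical map $C\rtimes_\mu G\to\mathcal M(B\rtimes_\mu G)$, this is precisely $(\phi\rtimes_\mu G)(f)\,g$, which establishes $(\phi\rtimes_\mu G)(f)\,g=(\phi\circ f)\ast g$ on the dense subalgebras $A\rtimes_{\alg}G$ and $B\rtimes_{\alg}G$; since $\phi\rtimes_\mu G$ is contractive, the formula determines it uniquely.

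The main obstacle is genuinely the ideal property: for an arbitrary crossed product functor $B\rtimes_\mu G$ need not embed as an ideal of $C\rtimes_\mu G$, so one cannot avoid invoking that $-\rtimes_\mu G$ is a correspondence crossed product functor. Alternatively one can argue more conceptually: $\phi$ turns the standard Hilbert module $B_B$ into a $G$-equivariant $A$--$B$ correspondence, the correspondence descent of \cite[Thm.~4.9]{bew1} turns it into an $(A\rtimes_\mu G)$--$(B\rtimes_\mu G)$ correspondence whose underlying Hilbert module is $(B\rtimes_\mu G)_{B\rtimes_\mu G}$, and the resulting left action is the required $^*$-homomorphism $A\rtimes_\mu G\to\mathcal L(B\rtimes_\mu G)=\mathcal M(B\rtimes_\mu G)$; the formula then falls out of the definition of the module crossed product on $C_c(G,-)$.
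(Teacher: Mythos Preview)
Your argument is correct and is precisely the content behind the paper's one-line proof, which simply cites \cite[Thm.~4.9 and Lem.~3.3]{bew1} together with the implication ``hereditary subalgebra property $\Rightarrow$ ideal property''. You have unpacked exactly that chain: Theorem~4.9 gives that a correspondence functor has the hereditary subalgebra property, hence the ideal property, and then the passage through $C=\phi(A)+B$ with $B\trianglelefteq C$ and the multiplier map $C\rtimes_\mu G\to\mathcal M(B\rtimes_\mu G)$ is the standard reduction (essentially Lemma~3.3 of \cite{bew1}) that turns the ideal property into functoriality for generalised homomorphisms.
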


\begin{rem}\label{canonicalmorphism}
	Note that for a crossed product functor $-\rtimes_\mu G$ which is functorial for generalised 
	homomorphisms,
	the unitary group representation $\iota_{G,\mu}\colon G \to \mathcal{U}\mathcal{M}(A\rtimes_\mu G)$ integrates
	to a *-homomorphism $\iota_{G,\mu}\colon C_\mu^* G  \to \mathcal{M}(A\rtimes_\mu G)$
	for all $G$-C*-algebras $A$. 

To see this, consider the $G$-equivariant unital *-homomorphism $m\colon\mathbb{C}\to \mathcal{M}(A)$ (which is just the scalar multiplication with the unit $1_A$); since $-\rtimes_\mu G$ is functorial for
generalised homomorphisms, $m$ induces a *-homomorphism $m\rtimes_\mu G\colon C_\mu^* G \to 
\mathcal{M}(A\rtimes_\mu G)$, which is the integrated form
of $\iota_{G,\mu}$.
\end{rem}
Furthermore, a consequence of Lemma~\ref{pre:propCCP} is the following:

\begin{prop}\label{pre:injectiveIndcoRep}
	Let $G$ be a discrete group, $A$ a $G$-C*-algebra and $(A\rtimes_{\max} G, \iota_A,\iota_G)$
	the maximal crossed product together with the universal covariant representation
	\[\iota_A\colon A \to \mathcal{M}(A\rtimes_{\max} G) \quad \text{ and } \quad \iota_G\colon G \to \mathcal{U}\mathcal{M}(A\rtimes_{\max} G)\,.\]
		
	Then $(A\rtimes_{\max} G, \operatorname{Ad}(\iota_G))$ is a $G$-C*-algebra and $\iota_A$ factors through a $G$-equivariant *-homomorphism $A \to A\rtimes_{\max} G$, where the $G$-action on $A\rtimes_{\max} G$ is now given by $\operatorname{Ad}(\iota_G)$.
	
Further, if $-\rtimes_\mu\, G$ is a correspondence crossed product functor, then the induced *-homomorphism 
	\[\iota_A\rtimes_\mu G\colon A\rtimes_\mu G \to (A\rtimes_{\max} G)\rtimes_{\mu,\Ad(\iota_G)}G\]
	is injective.
\end{prop}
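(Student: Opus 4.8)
The plan is to exhibit a concrete $G$-equivariant completely positive splitting and then apply the cp-map functoriality from Lemma~\ref{pre:propCCP}. First I would observe that the composition
$A \xrightarrow{\iota_A} \mathcal{M}(A\rtimes_{\max}G)$,
followed by viewing $A\rtimes_{\max}G$ as a $G$-C*-algebra via $\Ad(\iota_G)$, is $G$-equivariant: this is exactly the content of the first paragraph of the statement, and it rests on the covariance identity $\iota_G(g)\iota_A(a)\iota_G(g)^{-1} = \iota_A(\alpha_g(a))$, so that $\Ad(\iota_G(g))$ restricted to the image of $\iota_A$ agrees with the original $G$-action on $A$. Hence we obtain a $G$-equivariant $^*$-homomorphism $\iota_A\colon A \to \mathcal{M}(A\rtimes_{\max}G)$ into the multiplier algebra of the $G$-C*-algebra $(A\rtimes_{\max}G, \Ad(\iota_G))$. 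Applying Lemma~\ref{pre:propCCP} (functoriality for generalised homomorphisms, available since $-\rtimes_\mu G$ is a correspondence crossed product functor) yields the $^*$-homomorphism $\iota_A\rtimes_\mu G\colon A\rtimes_\mu G \to \mathcal{M}((A\rtimes_{\max}G)\rtimes_{\mu,\Ad(\iota_G)}G)$ whose injectivity we must prove.

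The key idea for injectivity is to construct a one-sided inverse at the level of completely positive maps. Since $G$ is discrete, consider the canonical $G$-equivariant conditional expectation (or more precisely the evaluation-at-identity / slice map) $E\colon A\rtimes_{\max}G \to A$ determined on $A\rtimes_{\alg}G = C_c(G,A)$ by $f \mapsto f(e)$; here the $G$-action on $A\rtimes_{\max}G$ is $\Ad(\iota_G)$ and one checks directly on $C_c(G,A)$ that $E$ intertwines $\Ad(\iota_G)$ with the original action $\alpha$ on $A$, because $\Ad(\iota_G(g))$ sends the generator $\iota_A(a)\iota_G(h)$ to $\iota_A(\alpha_g(a))\iota_G(ghg^{-1})$, which under evaluation at $e$ behaves correctly. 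This $E$ is a $G$-equivariant completely positive (contractive) map. By the cp-map property of $-\rtimes_\mu G$ we obtain a completely positive map $E\rtimes_\mu G\colon (A\rtimes_{\max}G)\rtimes_{\mu,\Ad(\iota_G)}G \to A\rtimes_\mu G$, satisfying $(E\rtimes_\mu G)(F) = E\circ F$ for $F \in \big((A\rtimes_{\max}G)\rtimes_{\alg}G\big)$. I would then compute the composition $(E\rtimes_\mu G)\circ(\iota_A\rtimes_\mu G)$ on the dense subalgebra $A\rtimes_{\alg}G$: for $f\in C_c(G,A)$ one has $(\iota_A\rtimes_\mu G)(f)$ represented by the $C_c(G, A\rtimes_{\max}G)$-function $s\mapsto \iota_A(f(s))$, and applying $E$ pointwise gives back $s\mapsto f(s)$, i.e. the identity on $A\rtimes_{\alg}G$. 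By density and continuity, $(E\rtimes_\mu G)\circ(\iota_A\rtimes_\mu G) = \id_{A\rtimes_\mu G}$, so $\iota_A\rtimes_\mu G$ is injective.

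The main obstacle I anticipate is making the slice map $E$ and the bookkeeping of the $\Ad(\iota_G)$-action fully rigorous, in particular verifying that $E$ is genuinely $G$-equivariant with respect to $\Ad(\iota_G)$ on the source and $\alpha$ on the target, and that $(\iota_A\rtimes_\mu G)(f)$ really is the multiplier implemented by the function $s\mapsto\iota_A(f(s))$ in $(A\rtimes_{\max}G)\rtimes_{\alg}G$ — this requires unwinding the defining formula $(\phi\rtimes_\mu G)(f)g = (\phi\circ f)\ast g$ from Lemma~\ref{pre:propCCP} together with the convolution on the iterated crossed product. A secondary subtlety is that $E$ takes values in $A$ rather than $\mathcal{M}(A)$, so one should confirm it lands in the right place and is nondegenerate enough for the cp-functoriality to give a map into $A\rtimes_\mu G$ itself (not merely its multipliers); this is where discreteness of $G$ is used, since then $A\rtimes_{\max}G$ contains $A$ as a genuine subalgebra with a bona fide conditional expectation onto it. Once these identifications are in place, the computation on $C_c(G,A)$ collapses to the identity and the result follows.
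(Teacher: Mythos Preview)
Your approach is correct and is essentially identical to the paper's proof: both use the canonical conditional expectation $E\colon A\rtimes_{\max}G\to A$ given by evaluation at $e$, apply the cp-map property to obtain $E\rtimes_\mu G$, and observe that $(E\rtimes_\mu G)\circ(\iota_A\rtimes_\mu G)=\id_{A\rtimes_\mu G}$ because $E\circ\iota_A=\id_A$. The only minor difference is that the paper first notes (using discreteness) that $\iota_A(a)=a\delta_e$ lands in $A\rtimes_{\max}G$ itself rather than just its multiplier algebra, so that $\iota_A\rtimes_\mu G$ is obtained by ordinary functoriality and the composition with $E\rtimes_\mu G$ is automatically defined---this cleanly resolves the ``secondary subtlety'' you flagged.
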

\begin{proof}
Notice that, since $G$ is a discrete group, the image of $\iota_A$ is contained in $A\rtimes_{\mathrm{alg}}G\subseteq A\rtimes_{\max}G$; indeed, it is given by $\iota_A(a) = a\delta_e$ for all $a\in A$ where $e\in G$ is the identity element.

	Let $\mathrm{E}\colon A\rtimes_{\max} G \to A$ be the canonical conditional expectation; this is the continuous extension of the evaluation $A\rtimes_{\mathrm{alg}}G\to A$ at the identity element $e\in G$. It is a $G$-equivariant completely positive map. Since 
	$\mathrm{E}\circ \iota_A = \mathrm{id}_A$, we obtain by the cp-map property
	\[\mathrm{id}_{A\rtimes_\mu G} =  (\mathrm{E}\rtimes_\mu G) \circ (\iota_A\rtimes_\mu G)\,,\]
	which proves the injectivity of $\iota_A\rtimes_\mu G$.
\end{proof}
Let $(A,\alpha)$ be a $G$-C*-algebra
with a $G$-action $\alpha$. Recall that the action $\alpha$ is called \emph{unitarily implemented}
if there exists a strictly continuous unitary group representation
$u\colon G\to \mathcal{M}(A)$ such that
\begin{align*}
	\alpha_s = \operatorname{Ad}(u_s) 
\end{align*}
for all $s\in G$. In the language of \cite[Sec.\ 5]{bew1} this means that $\alpha$ is exterior equivalent to the trivial action.

The next proposition follows from \cite[Lem.~5.2]{bew1} and \cite[Thm.~4.9]{bew1}.

\begin{prop}\label{CF:extEq}
		Let $(A,\alpha)$ be a $G$-C*-algebra with a unitarily implemented action $\alpha$, and denote by $u\colon G \to 
		\mathcal{U}\mathcal{M}(A)$ the corresponding unitary group representation.
		 
		If $-\rtimes_\mu\, G$ is a correspondence crossed product functor, then 
		\begin{align*}
			\varphi\colon A\rtimes_{\alg,\,\alpha}G \to A\rtimes_{\alg,\, \operatorname{id}}G\,,\quad 
			f \mapsto (G\to A,\, s\mapsto f(s)u_s)
		\end{align*}
		extends to a *-isomorphism
		$\varphi\colon A\rtimes_{\mu,\alpha} G \to A\rtimes_{\mu,\,\operatorname{id}} G$.
	\end{prop}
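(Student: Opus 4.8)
The plan is to realize the map $\varphi$ as coming from a correspondence between the two crossed products, or more directly, to write down an explicit inverse on the algebraic level and check that both $\varphi$ and its inverse are bounded for the $\mu$-norms using the functoriality we have available. First I would verify at the level of $C_c(G,A)$ that $\varphi$ is a $^*$-algebra homomorphism: the formula $f\mapsto (s\mapsto f(s)u_s)$ should intertwine the twisted convolution (with respect to $\alpha$) with the untwisted convolution (with respect to $\mathrm{id}$), and similarly for the involutions; this is the standard computation showing that exterior equivalence of actions gives an isomorphism of algebraic crossed products, and I would only sketch it. The inverse is given by $g\mapsto (s\mapsto g(s)u_s^*)$, again on $C_c(G,A)$.

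The key point — and the reason the hypothesis ``correspondence crossed product functor'' is needed — is to upgrade these algebraic isomorphisms to the $\mu$-completions. Here I would invoke \cite[Lem.~5.2]{bew1}: a unitarily implemented action is, in the terminology of \cite[Sec.~5]{bew1}, exterior equivalent to the trivial action, and by the cited lemma exterior equivalent actions on a $G$-C*-algebra induce, for a correspondence crossed product functor, $^*$-isomorphic crossed products, with the isomorphism implemented precisely by the formula above. Concretely, one uses that $\varphi$ and $\varphi^{-1}$ are both built from the $G$-equivariant completely positive (indeed, $^*$-isomorphic) identity-type maps together with conjugation by the multiplier unitary $u$, so that the cp-map property (Definition of correspondence crossed product, together with \cite[Thm.~4.9]{bew1}) forces them to extend to contractive — hence, by having inverses, isometric — maps on the $\mu$-crossed products. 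That the two extensions remain mutually inverse then follows because they are mutually inverse on the dense subalgebra $A\rtimes_{\alg}G$.

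The main obstacle I expect is purely bookkeeping: matching the abstract notion of ``exterior equivalence'' and the descent statement of \cite[Lem.~5.2]{bew1} to the very concrete formula $f\mapsto(s\mapsto f(s)u_s)$, and in particular checking the compatibility with the multiplier-algebra extensions $\overline{q_A}$ and with the covariant representations $\iota_{A,\mu}$, $\iota_{G,\mu}$ so that no boundedness is lost in passing between the universal and the $\mu$-pictures. Everything else — the convolution identity, the involution identity, and the verification that $\varphi\circ\varphi^{-1}=\id$ on $C_c(G,A)$ — is a routine, if slightly tedious, direct calculation that I would relegate to a reference or a one-line remark. Once the boundedness of both directions is secured via the correspondence property, the conclusion that $\varphi$ is a $^*$-isomorphism $A\rtimes_{\mu,\alpha}G\to A\rtimes_{\mu,\mathrm{id}}G$ is immediate.
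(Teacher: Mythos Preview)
Your proposal is correct and matches the paper's approach exactly: the paper's entire proof of this proposition is the single sentence ``follows from \cite[Lem.~5.2]{bew1} and \cite[Thm.~4.9]{bew1},'' which are precisely the two results you invoke. Your write-up is simply a more expanded version of the same argument, and the ``bookkeeping obstacles'' you anticipate are indeed handled by those references rather than spelled out in the paper.
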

	
\subsection{Fell absorption principle for crossed product functors}
	Let $-\rtimes_\mu\, G$ be a correspondence crossed product functor. First of all, we define
	a C*-tensor product 
	\begin{align*}
		-\otimes_\mu C_\mu^* G \colon \CstarAlg\to \CstarAlg
	\end{align*}
	with the group C*-algebra $C_\mu^* G$.
	For this purpose, we let $A$ be a C*-algebra. Then $A$ can be considered as a $G$-C*--algebra with
	$G$ acting trivially on $A$. We set
	\[A\otimes_\mu C_\mu^* G  \coloneqq A\rtimes_{\mu,\,\operatorname{id}} G.\]
The following proposition proves that this is indeed a C*-tensor product of $A$ and $C_\mu^* G$ and justifies our notation. 

	We first recall some notation. Let $A$ be a C*-algebra and
		let $(A\rtimes_{\max} G, \iota_A, \iota_G)$ be the maximal crossed product
		together with the universal covariant representation
		\[\iota_A \colon A\to \mathcal{M}(A\rtimes_{\max} G) \quad \text{ and } \quad \iota_G \colon G \to \mathcal{U}\mathcal{M}(A\rtimes_{\max} G)\,.\]
		Let
		$q_{A}\colon A\rtimes_{\max} G \to A\rtimes_\mu G$ be the natural quotient map
		and $ \iota_{A,\mu} \coloneqq \overline{q_A}\circ \iota_A$. Finally,
		let $\iota_{G,\mu}\colon C_\mu^* G \to \mathcal{M}(A\rtimes_\mu\, G)$ be the canonical 
		*-homomorphism described in Remark \ref{canonicalmorphism}.

	\begin{prop}\label{prop_is_tensor_product}
	Let $G$ act trivially on $A$.
		Then the *-homomorphisms
		$\iota_{A,\mu}$ and $\iota_{G,\mu}$ commute and we have
		\[\iota_{A,\mu}(a) \cdot \iota_{G,\mu}(f)
		\in A\rtimes_\mu G\]
		for every $a\in A$ and $f\in \mathbb{C}\rtimes_{\alg}G$. The induced *-homomorphism
		\begin{align*}
			\iota_{A,\mu}\odot \iota_{G,\mu}\colon A\odot C_\mu^* G  \to A\rtimes_\mu G
		\end{align*}
		is  injective and has dense range.
		In particular, $A\rtimes_\mu G$ is a tensor product for $A$ and $C_\mu^* G$.
	\end{prop}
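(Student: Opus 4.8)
The plan is to verify the three claimed properties in turn: commutativity of $\iota_{A,\mu}$ and $\iota_{G,\mu}$, the fact that products land in $A\rtimes_\mu G$ (not just its multiplier algebra), and finally injectivity and density of the induced map $\iota_{A,\mu}\odot\iota_{G,\mu}$. First I would observe that all three properties are already visible at the algebraic level inside $A\rtimes_{\alg}G = C_c(G,A)$: since $G$ acts trivially on $A$, for $a\in A$ the element $\iota_A(a)$ is (at least when $G$ is discrete, and by an approximation argument in general) the function $a\delta_e$, and $\iota_G$ sends $s$ to the point mass $\delta_s$, so the convolution product $(a\delta_e)\ast f$ and $f\ast(a\delta_e)$ both equal the function $s\mapsto a\, f(s)$ because the trivial action makes $\alpha_s$ disappear from the convolution formula. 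This simultaneously shows that $\iota_{A,\mu}(a)$ and $\iota_{G,\mu}(f)$ commute for $f\in\mathbb{C}\rtimes_{\alg}G$ and that their product is the honest element $a\cdot f\in A\rtimes_{\alg}G\subseteq A\rtimes_\mu G$. For non-discrete $G$ one replaces $a\delta_e$ by an approximate unit argument, or more cleanly invokes the standard fact that $\iota_{A,\mu}\cdot\iota_{G,\mu}$ restricted to $C_c(G)$-valued multipliers produces exactly the image of $A\otimes_{\alg} C_c(G)$ in the convolution algebra.

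Next I would identify the resulting $^*$-homomorphism $\iota_{A,\mu}\odot\iota_{G,\mu}\colon A\odot C_\mu^* G\to A\rtimes_\mu G$ concretely. By the previous paragraph it sends a simple tensor $a\otimes f$ (with $f\in\mathbb{C}\rtimes_{\alg}G$) to the function $s\mapsto a\,f(s)$, i.e.\ it is the natural map $A\odot C_c(G)\to C_c(G,A) = A\rtimes_{\alg,\id}G$ followed by the inclusion into the completion. Density of the range is then immediate: the image contains $A\odot(\mathbb{C}\rtimes_{\alg}G)$, which spans a dense subspace of $A\rtimes_{\alg}G = C_c(G,A)$ in the inductive-limit topology, hence its closure in $A\rtimes_\mu G$ is all of $A\rtimes_\mu G$ since $A\rtimes_{\alg}G$ is dense there by the definition of a crossed product functor.

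The substantive point is injectivity, and this is where I expect the main obstacle to lie. The strategy is to produce a left inverse, or at least enough positive linear functionals, using the correspondence (cp-map) property of $-\rtimes_\mu G$ in exactly the way it was used in Proposition~\ref{pre:injectiveIndcoRep}. Concretely: the conditional expectation $\mathrm{E}\colon A\rtimes_{\max,\id}G\to A$ is $G$-equivariant and completely positive for the trivial action, so by the cp-map property it descends to a completely positive map $\mathrm{E}\rtimes_\mu G\colon A\rtimes_{\mu,\id}G\to A\rtimes_{\mu,\id}\{e\} = A$. Composing with states of $A$ and with the canonical slicing one recovers, on the dense subalgebra, the functionals $\omega\otimes\psi$ for $\omega$ a state of $A$ and $\psi$ a state of $C_\mu^* G$; since such functionals separate points of the minimal tensor product $A\otimes_{\min}C_\mu^* G$ and the map $A\odot C_\mu^* G\to A\rtimes_\mu G$ factors through $A\otimes_{\min}C_\mu^* G$ (because $-\otimes_\mu-$ always dominates the minimal norm — $s_A\colon A\rtimes_\mu G\to A\rtimes_r G$ together with the analogous description of $A\rtimes_{r,\id}G$ as $A\otimes_{\min}C_r^* G$ gives a $^*$-homomorphism $A\otimes_\mu C_\mu^*G\to A\otimes_{\min}C_r^*G$ restricting to the identity on algebraic tensors), one deduces that the seminorm on $A\odot C_\mu^* G$ pulled back from $A\rtimes_\mu G$ is at least the minimal norm, hence the map is injective. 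The delicate verification is that $\mathrm{E}\rtimes_\mu G$ combined with functionals of $C_\mu^* G$ genuinely reproduces the product functionals on the algebraic tensor product; here one uses that for $f_1,f_2\in\mathbb{C}\rtimes_{\alg}G$ and $a_1,a_2\in A$ one has $\mathrm{E}\big((a_1 f_1)^*(a_2 f_2)\big) = a_1^* a_2\cdot(f_1^* \ast f_2)(e)$ by the explicit convolution and involution formulas, together with the positivity already established. I would cite \cite[Lem.~5.2]{bew1} or the results of \cite{bew1} on the trivial-action case if a cleaner reference to the identification $A\rtimes_{r,\id}G\cong A\otimes_{\min}C_r^*G$ is needed.
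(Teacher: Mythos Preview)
Your treatment of commutativity, the fact that products land in $A\rtimes_\mu G$, and density is fine and matches the paper.

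The injectivity argument, however, has a genuine gap. Your plan is to show that the product functionals $\omega\otimes\psi$ (for $\omega$ a state on $A$ and $\psi$ a state on $C^*_\mu G$) extend continuously to $A\rtimes_\mu G$, and you offer two justifications, neither of which works as written. First, the conditional expectation $\mathrm{E}\colon A\rtimes_{\max}G\to A$ is not an arrow in $\mathbf{CP}(G)$ between coefficient algebras, so writing ``$\mathrm{E}\rtimes_\mu G\colon A\rtimes_{\mu,\id}G\to A$'' is not a legitimate application of the cp-map property; the functor $-\rtimes_\mu G$ takes a $G$-equivariant cp map $\varphi\colon A\to B$ to $\varphi\rtimes_\mu G\colon A\rtimes_\mu G\to B\rtimes_\mu G$, and $\mathrm{E}$ is not of that form. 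Second, your parenthetical reduction to the minimal tensor product only produces the composite
\[
A\odot C^*_\mu G \xrightarrow{\ \iota_{A,\mu}\odot\iota_{G,\mu}\ } A\rtimes_\mu G \xrightarrow{\ s_A\ } A\rtimes_r G \cong A\otimes_{\min} C^*_r G,
\]
which on elementary tensors is $a\otimes x\mapsto a\otimes s_{\mathbb{C}}(x)$. Since $s_{\mathbb{C}}\colon C^*_\mu G\to C^*_r G$ is not injective in general, this composite has nontrivial kernel on $A\odot C^*_\mu G$ and cannot witness injectivity of $\iota_{A,\mu}\odot\iota_{G,\mu}$. In particular it does \emph{not} show that the pulled-back seminorm dominates the minimal norm on $A\odot C^*_\mu G$.

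The paper's argument is the clean fix and is very close to what you were reaching for: apply the cp-map property directly to a state $\varphi\colon A\to\mathbb{C}$ (which is $G$-equivariant for the trivial action) to obtain a cp map
\[
\varphi\rtimes_\mu G\colon A\rtimes_\mu G \longrightarrow \mathbb{C}\rtimes_\mu G = C^*_\mu G,
\qquad f\longmapsto \varphi\circ f.
\]
On $\sum_i a_i\otimes b_i$ with the $b_i\in C^*_\mu G$ linearly independent this gives $\sum_i\varphi(a_i)\,b_i$; if the original element is in the kernel, linear independence forces $\varphi(a_i)=0$ for every state $\varphi$, hence $a_i=0$. The point is that the slice maps land in $C^*_\mu G$, not in $C^*_r G$, which is exactly what your reduction via $s_A$ loses.
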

	\begin{proof}
	For $a\in A$ and $f\in \mathbb{C}\rtimes_{\mathrm{alg}}G$ we have
		$\iota_{A,\mu}(a) \cdot \iota_{G,\mu}(f) = (a\otimes f \colon G\to A,\, s\mapsto f(s)a)$, which is contained in $A\rtimes_{\mathrm{alg}}G \subseteq A\rtimes_{\mu}G$, and it is clear that the *-homomorphisms
		$\iota_{A,\mu}$ and $\iota_{G,\mu}$ commute. We also obtain
		$\iota_{A,\mu}(a) \cdot \iota_{G,\mu}(x) \in A\rtimes_\mu G$ for all $a\in A$ and $x\in C_\mu^* G$. It is also quickly verified that 
		$\mathrm{span}\lbrace \iota_{A,\mu}(a) \cdot \iota_{G,\mu}(f)\colon a\in A,\, f\in \mathbb{C}\rtimes_{\mathrm{alg}}G \rbrace$ is a dense subspace of
		 $A\rtimes_\mu G$.
		
		Therefore it remains to prove  that $\iota_{A,\mu}\odot \iota_{G,\mu}$ is injective.
		For this purpose, assume that $x\in A\odot C_\mu^* G$ is an element in the kernel of
		$\iota_{A,\mu}\odot \iota_{G,\mu}$. Then there are elements
		$a_1,\ldots,a_n \in A$ and $b_1,\ldots, b_n \in C_\mu^*(G)$ such that
		$x= \sum_i a_i \otimes b_i$ and $\lbrace b_i \colon 1\leq i \leq n \rbrace$
		is linearly independent. Since $-\rtimes_\mu G$ has the cp-map property,
		every state $\varphi\in \mathcal{S}(A)$ on $A$ induces a completely
		positive map $\varphi\rtimes_\mu G\colon A\rtimes_\mu G \to C_\mu^* G$ given by
		\[(\varphi\rtimes_\mu G)(f)(t) = \varphi(f(t))\]
		for $f\in A\rtimes_{\mathrm{alg}}G$ and $t\in G$.
		We obtain $$(\varphi\rtimes_\mu G)(\iota_{A,\mu}\odot \iota_{G,\mu}(x))
		= \sum_i \varphi(a_i) b_i=0$$ for all states $\varphi \in \mathcal{S}(A)$ on $A$.
		This implies $\varphi(a_i) = 0$ for all $\varphi \in \mathcal{S}(A)$ and
		$1\leq i \leq n$ and finally $a_i = 0$ for all $1\leq i \leq n$. 
		This completes the proof.
	\end{proof}
	\begin{rem}
		In the following we will identify $A\odot C_\mu^*(G)$  with a *-subalgebra
		of $A\rtimes_\mu G$ (via $\iota_{A,\mu}\odot \iota_{G,\mu}$) whenever $A$ is a C*-algebra
		(with trivial $G$-action).
		
		Note that the tensor product $-\otimes_\mu C_\mu^* G$ is just the restriction of the
		original crossed product functor $-\rtimes_\mu G$ to C*-algebras with the trivial $G$-action and hence inherits properties like (generalised) functoriality, exactness, etc.
	\end{rem}

Let $G$ be a locally compact group, $-\rtimes_\mu \, G$ be a correspondence crossed product functor and $A$ be a $G$-C*-algebra.
		Let $(A\rtimes_{\max} G,\iota_A, \iota_G)$ be the maximal crossed product of $A$
		and 
		$u_{G,\mu}\colon G \to \mathcal{U}\mathcal{M}(C_\mu^* G)$ the canonical unitary representation
		on $C^*_\mu G$.
			
\begin{lem}\label{lem_strong_fell}
		The *-homomorphism
		\begin{itemize}
		\item $\iota_A \otimes_\mu 1\colon A \to \mathcal{M}((A\rtimes_{\max} G) \otimes_\mu C_\mu^* G)$
		induced by
		\begin{align*}
			(\iota_A \otimes_\mu 1)(a) (b\otimes c) \coloneqq \iota_A(a)b \otimes c
		\end{align*}
		for $a\in A$, $b\in A\rtimes_{\max} G$ and $c\in C^*_\mu G$,
		and the group homomorphism
		\item $\iota_G \otimes_\mu u_{G,\mu}\colon G \to \mathcal{U}\mathcal{M}((A\rtimes_{\max} G) \otimes_\mu C_\mu^* G)$
		induced by
		\begin{align*}
			(\iota_G \otimes_\mu u_{G,\mu})(s) (b\otimes c) = \iota_G(s)b \otimes u_{G,\mu}(s) c
		\end{align*}
		for all $s\in G$, $b\in A\rtimes_{\max} G$ and $c\in C_\mu^* G$
		\end{itemize}
		exist and define a nondegenerate covariant representation of $A$. The integrated form
		\[(\iota_A \otimes_\mu 1) \rtimes_{\max} (\iota_G \otimes_\mu u_{G,\mu}) \colon A\rtimes_{\max} G \to \mathcal{M}((A\rtimes_{\max} G) \otimes_\mu C_\mu^* G )\]
		factors through
		$q_A \colon A\rtimes_{\max} G \to A\rtimes_\mu G$, i.e., there is a unique *-homomorphism
		\[(\iota_A \otimes_\mu 1) \rtimes_\mu (\iota_G \otimes_\mu u_{G,\mu})\colon A\rtimes_\mu G
		\to \mathcal{M}((A\rtimes_{\max} G) \otimes_\mu C_\mu^* G)\]
		such that $(\iota_A \otimes_\mu 1) \rtimes_{\max} (\iota_G \otimes_\mu u_{G,\mu})
		= ((\iota_A \otimes_\mu 1) \rtimes_\mu (\iota_G \otimes_\mu u_{G,\mu})) \circ q_A$.
		If $G$ is discrete, then $(\iota_A \otimes_\mu 1) \rtimes_\mu (\iota_G \otimes_\mu u_{G,\mu})$
		is injective.
	\end{lem}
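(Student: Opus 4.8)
The plan is to reduce the statement to results already established in this section — Lemma~\ref{pre:propCCP} and Proposition~\ref{CF:extEq} for the construction, and Proposition~\ref{pre:injectiveIndcoRep} for the injectivity — by recognising the prospective Fell absorption map as (the transport of) the map induced by the generalised homomorphism $\iota_A\colon A\to\mathcal{M}(A\rtimes_{\max}G)$. Throughout write $B\coloneqq A\rtimes_{\max}G$ and recall that, by definition, $(A\rtimes_{\max}G)\otimes_\mu C_\mu^*G = B\rtimes_{\mu,\operatorname{id}}G$ with $G$ acting trivially on $B$. Denote by $\iota_{B,\mu}\colon B\to\mathcal{M}(B\rtimes_{\mu,\operatorname{id}}G)$ and $\iota_{G,\mu}\colon G\to\mathcal{U}\mathcal{M}(B\rtimes_{\mu,\operatorname{id}}G)$ the canonical covariant pair of the trivial action; then $\iota_{B,\mu}$ and $\iota_{G,\mu}$ commute, $\iota_{G,\mu}$ integrates the canonical copy of $C_\mu^*G$ (Proposition~\ref{prop_is_tensor_product}), and on the dense subspace $C_c(G,B)$ one has $(\iota_{B,\mu}(m)h)(t)=m\,h(t)$ for $m\in\mathcal{M}(B)$ and $(\iota_{G,\mu}(s)h)(t)=h(s^{-1}t)$.

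First I would exhibit the covariant pair by closed formulas. Since $\iota_A\colon A\to\mathcal{M}(B)$ and $\iota_{B,\mu}$ are nondegenerate, so is the composite $\overline{\iota_{B,\mu}}\circ\iota_A$; I set $\iota_A\otimes_\mu 1\coloneqq\overline{\iota_{B,\mu}}\circ\iota_A$, and evaluating on $\iota_{B,\mu}(b)\iota_{G,\mu}(c)\,(=b\otimes c)$ recovers the asserted value $\iota_A(a)b\otimes c$. I set $(\iota_G\otimes_\mu u_{G,\mu})(s)\coloneqq\overline{\iota_{B,\mu}}(\iota_G(s))\cdot\iota_{G,\mu}(s)$, a product of unitary multipliers; it is a strictly continuous group homomorphism because $\iota_G$ is strictly continuous and $\overline{\iota_{B,\mu}}(\iota_G(s))$ commutes with $\iota_{G,\mu}(t)$ by trivial-action covariance, and evaluating on $b\otimes c$ and using that $u_{G,\mu}$ acts on $C_\mu^*G$ by left translation yields $\iota_G(s)b\otimes u_{G,\mu}(s)c$. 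Covariance of this pair is then immediate from the covariance relation $\iota_G(s)\iota_A(a)\iota_G(s)^*=\iota_A(\alpha_s(a))$ in $\mathcal{M}(B)$, and nondegeneracy of $\iota_A\otimes_\mu 1$ has already been noted; so there is an integrated form $\Phi_{\max}\coloneqq(\iota_A\otimes_\mu 1)\rtimes_{\max}(\iota_G\otimes_\mu u_{G,\mu})\colon A\rtimes_{\max}G\to\mathcal{M}((A\rtimes_{\max}G)\otimes_\mu C_\mu^*G)$.

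For the factorisation through $q_A$ I would produce the factored map directly. A short estimate using strict continuity of $\iota_G$ shows that $\Ad(\iota_G)$ is a strongly continuous $G$-action on $B$, unitarily implemented by $\iota_G\colon G\to\mathcal{U}\mathcal{M}(B)$, and, again by the covariance relation, $\iota_A\colon A\to\mathcal{M}(B)$ is a nondegenerate $G$-equivariant generalised homomorphism for it. Hence Lemma~\ref{pre:propCCP} gives $\iota_A\rtimes_\mu G\colon A\rtimes_\mu G\to\mathcal{M}(B\rtimes_{\mu,\Ad(\iota_G)}G)$, and Proposition~\ref{CF:extEq} applied to $(B,\Ad(\iota_G))$ gives a *-isomorphism $\varphi\colon B\rtimes_{\mu,\Ad(\iota_G)}G\to B\rtimes_{\mu,\operatorname{id}}G=(A\rtimes_{\max}G)\otimes_\mu C_\mu^*G$, $f\mapsto(s\mapsto f(s)\iota_G(s))$. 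I then set $\Psi\coloneqq\overline{\varphi}\circ(\iota_A\rtimes_\mu G)$ and prove $\Phi_{\max}=\Psi\circ q_A$: both sides are contractive *-homomorphisms, so it suffices to check equality on the dense subalgebra $C_c(G,A)\subseteq A\rtimes_{\max}G$, and since a multiplier is determined by its action on the dense subspace $C_c(G,B)$ of the target, to check that $\Phi_{\max}(f)$ and $\Psi(q_A(f))$ send each $h\in C_c(G,B)$ to the same function. Expanding $\Phi_{\max}(f)=\int(\iota_A\otimes_\mu 1)(f(s))(\iota_G\otimes_\mu u_{G,\mu})(s)\,ds$ with the formulas of the previous paragraph gives $t\mapsto\int\iota_A(f(s))\iota_G(s)\,h(s^{-1}t)\,ds$, while expanding $\Psi(q_A(f))=\overline{\varphi}\big((\iota_A\rtimes_\mu G)(f)\big)$ with the relation $(\iota_A\rtimes_\mu G)(f)\cdot g=(\iota_A\circ f)*g$ of Lemma~\ref{pre:propCCP}, the convolution formula in $B\rtimes_{\alg,\Ad(\iota_G)}G$, and the definition of $\varphi$ gives the same function, the two computations matching via the telescoping identity $\iota_G(s)^*\iota_G(t)=\iota_G(s^{-1}t)$. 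Since $q_A$ is surjective, $\Psi$ is the unique such factorisation, and this is what we take as the definition of $(\iota_A\otimes_\mu 1)\rtimes_\mu(\iota_G\otimes_\mu u_{G,\mu})$.

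Finally, for discrete $G$ we have $\iota_A(a)=a\delta_e\in A\rtimes_{\alg}G\subseteq B$, so $\iota_A\rtimes_\mu G$ is precisely the *-homomorphism $A\rtimes_\mu G\to B\rtimes_{\mu,\Ad(\iota_G)}G$ shown to be injective in Proposition~\ref{pre:injectiveIndcoRep}; since $\varphi$ is an isomorphism, $\Psi=\overline{\varphi}\circ(\iota_A\rtimes_\mu G)$ is injective as well. I expect the main obstacle to be organisational rather than conceptual: keeping consistent the three identifications in play — the $\mu$-tensor product as a trivial-action crossed product, the exterior equivalence between the $\operatorname{id}$- and $\Ad(\iota_G)$-actions on $B$, and the unbounded multipliers $\iota_G(s)$ — and carrying out the integrated-form computation of the previous paragraph with the correct translation and modular conventions; none of the individual steps is deep, but there are many places to misplace a variable or an adjoint.
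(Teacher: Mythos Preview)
Your proposal is correct and follows essentially the same approach as the paper: construct the covariant pair via the canonical multiplier embeddings of $A\rtimes_{\max}G$ and $C^*_\mu G$ into the tensor product, then obtain the factorisation by composing $\iota_A\rtimes_\mu G$ (from Lemma~\ref{pre:propCCP}) with the exterior-equivalence isomorphism $\varphi$ of Proposition~\ref{CF:extEq}, checking agreement on $C_c(G,A)$, and deduce injectivity for discrete $G$ from Proposition~\ref{pre:injectiveIndcoRep}. Your write-up is in fact slightly more explicit than the paper's in spelling out the integrated-form computation; the only slip is the phrase ``unbounded multipliers $\iota_G(s)$'' at the end, since these are unitary (hence bounded) multipliers.
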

	\begin{proof}
		Let us first show existence of $\iota_A\otimes_\mu 1$ and $\iota_G \otimes_\mu 
		u_{G,\mu}$. It is known \cite[Prop.~B.21]{raeburn_williams} that there are *-mono\-mor\-phisms
		\begin{alignat*}{3}	
		j_{A\rtimes_{\max} G} & \colon & \mathcal{M}(A\rtimes_{\max} G) & \longrightarrow \mathcal{M}((A\rtimes_{\max} G)\otimes_\mu C_\mu^* G)\\
		j_{C^*_\mu G} & \colon & \mathcal{M}(C^*_\mu G) & \longrightarrow \mathcal{M}((A\rtimes_{\max} G) \otimes_\mu C_\mu^* G)
		\end{alignat*}
		which satisfy
		\begin{align*}
		j_{A\rtimes_{\max} G}(m) (a\otimes b) & = ma \otimes b\\
		j_{C_\mu^* G}(n)(a\otimes b) & = a\otimes nb\\
		j_{A\rtimes_{\max} G}(a) \cdot j_{C_\mu^* G}(b)
		& = a\otimes b
		\end{align*}
		for all $m\in \mathcal{M}(A\rtimes_{\max} G)$, $n\in \mathcal{M}(C_\mu^* G)$,
		$a\in A\rtimes_{\max} G$ and $b\in C_\mu^* G$. Note that by the first two equations in the previous display the images of $j_{A\rtimes_{\max} G}$ and $j_{C^*_\mu G}$ commute with each other. We now set
		\[\iota_A\otimes_\mu 1 \coloneqq j_{A\rtimes_{\max} G}\circ \iota_A \quad \text{ and } \quad \iota_G \otimes_\mu u_{G,\mu} = (j_{A\rtimes_{\max} G}\circ \iota_G) \cdot (j_{C_\mu^* G }\circ u_{G,\mu})\,.\]
		One immediately sees that $(\iota_A\otimes_\mu 1, \, \iota_G\otimes_\mu u_{G,\mu})$
		is of the form claimed by the lemma. 
		Starting from the formulas and, since the universal covariant representation $(\iota_A,\iota_G)$ is involved, it follows immediately that $(\iota_A\otimes_\mu 1, \, \iota_G\otimes_\mu u_{G,\mu})$ is a nondegenerate
		covariant representation of~$A$. It integrates to a nondegenerate *-representation
		\[(\iota_A \otimes_\mu 1) \rtimes_{\max} (\iota_G \otimes_\mu u_{G,\mu})\colon A\rtimes_{\max} G
		\to \mathcal{M}((A\rtimes_{\max} G)\otimes_\mu C_\mu^* G )\,.\]
		It remains to show that
		$(\iota_A \otimes_\mu 1) \rtimes_{\max} (\iota_G \otimes_\mu u_{G,\mu})$ factors through
		$q_A\colon
		 A\rtimes_{\max} G \to A\rtimes_\mu G$.
		
		To this end, we note that $(A\rtimes_{\max} G, \Ad(\iota_G))$ defines a $G$-C*-algebra whose
		action is unitarily implemented.  Because
		$-\rtimes_\mu G$ is a correspondence crossed product functor by assumption, by Proposition~\ref{CF:extEq} the C*-algebra
		$(A\rtimes_{\max} G)\rtimes_{\mu,\,\Ad(\iota_G)}G$ is *-isomorphic to
		\[(A\rtimes_{\max} G)\rtimes_{\mu,\,\operatorname{id}}G
		= (A\rtimes_{\max} G)\otimes_\mu C^*_\mu G \,.\]
		The *-isomorphism
		$\varphi\colon(A\rtimes_{\max} G)\rtimes_{\mu,\, \Ad(\iota_G)}G \to (A \rtimes_{\max} G) \otimes_\mu C_\mu^* G$
		 is given by
		\begin{align*}
			\varphi(f) \coloneqq (G \to A\rtimes_{\operatorname{max}} G,\, s \mapsto f(s) \iota_G(s))
		\end{align*}
		for $f\in (A\rtimes_{\max} G )\rtimes_{\alg,\, \Ad(\iota_G)}G$.
		Furthermore, $\iota_A\colon A \to \mathcal{M}(A\rtimes_{\max} G)$ is a $G$-equivariant
		generalised *-homomorphism, where $\Ad(\iota_G)$ is the action used on $A\rtimes_{\max} G$, and therefore induces a *-homo\-mor\-phism
		\[\iota_A \rtimes_\mu G \colon A\rtimes_\mu G \to \mathcal{M}((A\rtimes_{\max} G)\rtimes_{\mu,\Ad(\iota_G)}G)\,.\]
		
		We now define
		\[\pi\coloneqq \overline{\varphi}\circ (\iota_A \rtimes_\mu G)\colon A\rtimes_\mu G \to \mathcal{M}((A \rtimes_{\max} G) \otimes_\mu C_\mu^* G)\,.\]
		Then $\pi$ and $(\iota_A \otimes_\mu 1) \rtimes_{\max} (\iota_G \otimes_\mu
		u_{G,\mu})$ coincide on the dense subspace $A\rtimes_{\alg} G$. Hence 
		$(\iota_A \otimes_\mu 1) \rtimes_{\max} (\iota_G \otimes_\mu u_{G,\mu})$ factors through
		$q_A\colon A\rtimes_{\max} G \to A\rtimes_\mu G$.
		
		If $G$ is discrete, then Proposition~\ref{pre:injectiveIndcoRep} states that $\iota_A\rtimes_\mu G$ is injective and hence
		$\pi$ is injective. This implies that $(\iota_A \otimes_\mu 1) \rtimes_\mu (\iota_G \otimes_\mu u_{\mu,G})$
		is injective.
	\end{proof}
	\begin{rem}
The Fell absorption principle for exotic crossed products (Theorem \ref{main_thm_intro_fell} in the introduction) is a direct consequence of the previous lemma.
	\end{rem}
\subsection{Exact correspondence crossed product functors}

\begin{defn}
Let $-\rtimes_\mu G$ be a crossed product functor. It is called \emph{exact} if for every $G$-C*-algebra $A$ and every $G$-invariant ideal $I$ of $A$ the sequence
\[0 \to I\rtimes_\mu G \to A\rtimes_\mu G \to (A/I)\rtimes_\mu G \to 0\]
is exact.
\end{defn}

\begin{cor}\label{cor:exact-trivial}
Let $-\rtimes_\mu G$ be a correspondence crossed product functor for a discrete group $G$.
Then it is exact if and only if it is exact for trivial actions, that is, if the functor $A\mapsto A\otimes_\mu C^*_\mu G$ is exact on the category of C*-algebras.
\end{cor}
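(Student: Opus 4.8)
The ``only if'' implication is immediate: if $-\rtimes_\mu G$ is exact, then in particular it is exact on $G$-C*-algebras carrying the trivial action, which is, by definition of $-\otimes_\mu C^*_\mu G$, the assertion that $A\mapsto A\otimes_\mu C^*_\mu G$ is exact on $\CstarAlg$. So assume henceforth that $-\otimes_\mu C^*_\mu G$ is exact, fix a $G$-invariant ideal $I\subseteq A$, and write $\phi\colon I\into A$ and $q\colon A\onto A/I$ for the inclusion and quotient maps; the goal is to show that $0\to I\rtimes_\mu G\to A\rtimes_\mu G\to(A/I)\rtimes_\mu G\to 0$ is exact.

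The strategy is to exhibit $-\rtimes_\mu G$ as a \emph{natural retract} of the functor $H\colon\CstarAlg(G)\to\CstarAlg$ given by $H(A):=(A\rtimes_{\max}G)\otimes_\mu C^*_\mu G$, and then to observe that being such a retract already forces exactness. For the retract, fix a $G$-C*-algebra $A$. Since $G$ is discrete, Proposition~\ref{pre:injectiveIndcoRep} provides a $G$-equivariant $^*$-homomorphism $\iota_A\colon A\to A\rtimes_{\max}G$ (where $A\rtimes_{\max}G$ carries the $\Ad(\iota_G)$-action) together with the canonical conditional expectation $E_A\colon A\rtimes_{\max}G\to A$, a $G$-equivariant completely positive map with $E_A\circ\iota_A=\id_A$. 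As $\Ad(\iota_G)$ is unitarily implemented, Proposition~\ref{CF:extEq} supplies a $^*$-isomorphism
\[\varphi_A\colon (A\rtimes_{\max}G)\rtimes_{\mu,\Ad(\iota_G)}G\xrightarrow{\ \cong\ }(A\rtimes_{\max}G)\otimes_\mu C^*_\mu G=H(A).\]
Applying the functor $-\rtimes_\mu G\colon\mathbf{CP}(G)\to\mathbf{CP}$ to $\iota_A$ and to $E_A$ and composing with $\varphi_A$ and $\varphi_A^{-1}$ respectively, one obtains maps
\[\eta_A:=\varphi_A\circ(\iota_A\rtimes_\mu G)\colon A\rtimes_\mu G\to H(A),\qquad r_A:=(E_A\rtimes_\mu G)\circ\varphi_A^{-1}\colon H(A)\to A\rtimes_\mu G,\]
and $r_A\circ\eta_A=(E_A\rtimes_\mu G)\circ(\iota_A\rtimes_\mu G)=\id_{A\rtimes_\mu G}$, the last equality being exactly the identity established in the proof of Proposition~\ref{pre:injectiveIndcoRep}. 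Since $\iota$, $E$ and $\varphi$ are natural in $A$ and $-\rtimes_\mu G$ is a functor on $\mathbf{CP}(G)$, the families $\eta=(\eta_A)_A$ and $r=(r_A)_A$ are natural transformations $(-\rtimes_\mu G)\Rightarrow H$ and $H\Rightarrow(-\rtimes_\mu G)$ with $r\circ\eta=\id$.

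Now I run the diagram chase. The functor $H$ is exact, because the full crossed product functor $-\rtimes_{\max}G$ is exact (this is classical) and $-\otimes_\mu C^*_\mu G$ is exact by hypothesis; writing $\phi_{\max}:=\phi\rtimes_{\max}G$ and $q_{\max}:=q\rtimes_{\max}G$, the row $0\to H(I)\xrightarrow{\phi_{\max}\otimes_\mu\id}H(A)\xrightarrow{q_{\max}\otimes_\mu\id}H(A/I)\to 0$ is thus exact. Three points remain. First, $\phi\rtimes_\mu G$ is injective: in the naturality square $\eta_A\circ(\phi\rtimes_\mu G)=(\phi_{\max}\otimes_\mu\id)\circ\eta_I$ both $\eta_I$ (a split monomorphism) and $\phi_{\max}\otimes_\mu\id$ (the first map of the exact row) are injective. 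Second, $q\rtimes_\mu G$ is surjective, since its range is a C*-subalgebra of $(A/I)\rtimes_\mu G$ containing the dense subspace $C_c(G,A/I)$, and $(q\rtimes_\mu G)\circ(\phi\rtimes_\mu G)=(q\phi)\rtimes_\mu G=0$. Third, and crucially, $\ker(q\rtimes_\mu G)\subseteq\image(\phi\rtimes_\mu G)$: if $(q\rtimes_\mu G)(x)=0$, then naturality of $\eta$ gives $(q_{\max}\otimes_\mu\id)(\eta_A(x))=\eta_{A/I}\bigl((q\rtimes_\mu G)(x)\bigr)=0$, so exactness of the $H$-row yields $z\in H(I)$ with $\eta_A(x)=(\phi_{\max}\otimes_\mu\id)(z)$; applying $r_A$ and using $r_A\circ\eta_A=\id$ together with naturality of $r$, one gets $x=r_A\bigl((\phi_{\max}\otimes_\mu\id)(z)\bigr)=(\phi\rtimes_\mu G)\bigl(r_I(z)\bigr)\in\image(\phi\rtimes_\mu G)$. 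Hence $0\to I\rtimes_\mu G\to A\rtimes_\mu G\to(A/I)\rtimes_\mu G\to 0$ is exact.

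The step that demands real care — and the main obstacle — is the naturality of the three building blocks, above all of the $^*$-isomorphism $\varphi_A$ of Proposition~\ref{CF:extEq}: one must check that the family $(\varphi_A)$ is compatible with the maps $\phi_{\max}$ and $q_{\max}$ induced on the maximal crossed products, so that the naturality squares for $\eta$ and $r$ invoked in the chase genuinely commute. This is a direct but somewhat delicate verification on the dense subalgebras $C_c(G,-)$, relying on the fact that $\phi\rtimes_{\max}G$ and $q\rtimes_{\max}G$ intertwine the respective canonical unitary representations $\iota_G$ of $G$.
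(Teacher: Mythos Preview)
Your proof is correct and shares its backbone with the paper's argument: both set up the same commutative ladder with the exact row $H(I)\to H(A)\to H(A/I)$ on top (where $H(-)=(-\rtimes_{\max}G)\otimes_\mu C^*_\mu G$) and the $\mu$-crossed products below, connected by the embeddings $\eta_{(-)}$ coming from the Fell absorption principle (these are exactly the maps $\delta$ of Lemma~\ref{lem_strong_fell}). The naturality verification you flag is indeed routine on $C_c(G,-)$, since $\phi\rtimes_{\max}G$ and $q\rtimes_{\max}G$ intertwine the canonical unitaries $\iota_G$; the paper leaves this implicit as well.

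The genuine difference lies in the middle-exactness step. You exploit the cp-functoriality to build a natural \emph{retraction} $r_A=(E_A\rtimes_\mu G)\circ\varphi_A^{-1}$ from $H(A)$ back onto $A\rtimes_\mu G$, so that $-\rtimes_\mu G$ becomes a natural retract of the exact functor $H$; exactness then drops out by a clean categorical diagram chase. The paper instead avoids the retraction and argues with approximate units: given $x\in\ker(q\rtimes_\mu G)$, one has $\delta(x)\in H(I)$, and since $\delta$ restricted to $I\rtimes_\mu G$ is \emph{nondegenerate} into $H(I)$, the image of an approximate unit $(e_i)$ of $I\rtimes_\mu G$ is an approximate unit of $H(I)$, whence $\delta(xe_i)\to\delta(x)$ and, by isometry of $\delta$, $xe_i\to x\in I\rtimes_\mu G$. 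Your route is more conceptual and makes transparent why the cp-map property is doing the work; the paper's route is shorter and uses only the $^*$-homomorphism part of Lemma~\ref{lem_strong_fell} together with its nondegeneracy, at the price of invoking the ideal property separately for injectivity of $\phi\rtimes_\mu G$.
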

\begin{proof}
The forward direction is trivial.

For the converse we use the embedding $A\rtimes_\mu G\into (A\rtimes_{\max} G) \otimes_\mu C^*_\mu G $ provided by Lemma~\ref{lem_strong_fell}. Then the proof is exactly the same as the one for the fact that a group is exact if and only if its reduced group C*-algebra is exact. For convenience we sketch the proof here. Given a $G$-invariant ideal $I\subseteq A$, the exactness of $-\rtimes_{\max} G$ yields a short exact sequence $I\rtimes_{\max} G\into A\rtimes_{\max} G\onto (A/I)\rtimes_{\max} G$. Using the maps provided by Lemma~\ref{lem_strong_fell} for $I$, $A$ and $A/I$, we get a commutative diagram
\[
\xymatrix{
(I\rtimes_{\max} G) \otimes_\mu C^*_\mu G  \ar[r] & (A\rtimes_{\max} G)\otimes_\mu C^*_\mu G \ar[r]^-{\tilde q} & ((A/I)\rtimes_{\max} G) \otimes_\mu C^*_\mu G \\
I\rtimes_\mu G \ar[r]^-{i} \ar[u] & A\rtimes_\mu G \ar[r]^-{q} \ar[u]^-{\delta} & (A/I)\rtimes_\mu G \ar[u]
}
\]
where the vertical arrows are the injective homomorphisms provided by Lemma~\ref{lem_strong_fell} and the top line is exact by assumption. 

The following argument now yields the exactness of the bottom line at $A\rtimes_\mu G$: let $x\in A\rtimes_\mu G$ with $q(x)=0$. Then $\delta(x)\in \ker(\tilde q)=J \coloneqq (I\rtimes_{\max}G)\otimes_\mu C^*_\mu G$. Let $(e_i)$ be an approximate unit for $I\rtimes_\mu G$. Since $\delta$ is nondegenerate (by Lemma~\ref{lem_strong_fell}), $\delta(e_i)$ is an approximate unit for $J$. Since $\delta(x)\in J$ we have  $\delta(xe_i)=\delta(x)\delta(e_i)\to \delta(x)$. But since $\delta$ is isometric, this is equivalent to $xe_i\to x$. Therefore $x\in I\rtimes_\mu G$.

The injectivity of $i$ follows from the ideal property of $\rtimes_\mu$, and surjectivity of $q$ holds, since it is quickly seen that it must have dense image.
\end{proof}
	
\begin{rem}
The full crossed product functor $A\mapsto A\rtimes_{\max} G$ is always an exact correspondence functor. The group $C^*$-algebra of this functor is, by definition, $C^*_{\max}(G)$, the full group $C^*$-algebra of $G$. The other functor of interest for us will be the minimal exact correspondence functor $A\mapsto A\rtimes_\epsilon G$ whose group $C^*$-algebra is denoted by $C^*_\epsilon(G)$. This is, in a precise sense, the smallest crossed product functor which is at the same time exact and strongly Morita compatible (a correspondence functor).
\end{rem}

For the proof of Theorem \ref{thm_low_degree_exotic} we need to extend an exact correspondence
	crossed product functor $-\rtimes_\mu G$ to an exact correspondence crossed product functor 
	$-\rtimes_{\widetilde{\mu}}(G\times\mathbb{Z})$ such that 
	$C^*_{\widetilde{\mu}}(G\times \mathbb{Z}) \cong C_\mu^* G \otimes C^* \mathbb{Z}$. This is the content of the next lemma.
	
\begin{lem}\label{lem_extension_crossed_product}
Let $-\rtimes_\mu G$ be a crossed product functor. Then there is a crossed product functor $-\rtimes_{\widetilde{\mu}}(G\times\mathbb{Z})$ such that $C^*_{\widetilde{\mu}}(G\times \mathbb{Z}) \cong C_\mu^* G \otimes C^* \mathbb{Z}$. Further,
\begin{enumerate}
\item if $-\rtimes_\mu G$ is exact, then $-\rtimes_{\widetilde{\mu}}(G\times\mathbb{Z})$ is exact, and
\item if $-\rtimes_\mu G$ is a correspondence functor, then $-\rtimes_{\widetilde{\mu}}(G\times\mathbb{Z})$ also is one.
\end{enumerate}
\end{lem}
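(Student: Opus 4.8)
The plan is to realize $-\rtimes_{\widetilde{\mu}}(G\times\mathbb{Z})$ as the iterated crossed product $A\mapsto (A\rtimes_\mu G)\rtimes\mathbb{Z}$, where the outer crossed product by $\mathbb{Z}$ is the full one (which, since $\mathbb{Z}$ is amenable, agrees with the reduced one, so there is no ambiguity and I simply write $-\rtimes\mathbb{Z}$). A $G\times\mathbb{Z}$-C*-algebra $A$ carries commuting actions $\alpha\colon G\to\operatorname{Aut}(A)$ and $\beta\colon\mathbb{Z}\to\operatorname{Aut}(A)$; each $\beta_n$ is $G$-equivariant for $\alpha$, so functoriality of $-\rtimes_\mu G$ produces automorphisms $\beta_n\rtimes_\mu G$ of $A\rtimes_\mu G$ that assemble into a $\mathbb{Z}$-action $\widetilde{\beta}$ (continuity being automatic as $\mathbb{Z}$ is discrete), and I set $A\rtimes_{\widetilde{\mu}}(G\times\mathbb{Z})\coloneqq(A\rtimes_{\mu,\alpha}G)\rtimes_{\widetilde{\beta}}\mathbb{Z}$. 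A $G\times\mathbb{Z}$-equivariant *-homomorphism $A\to B$ is in particular $G$-equivariant, hence induces $A\rtimes_\mu G\to B\rtimes_\mu G$, which by naturality of $-\rtimes_\mu G$ intertwines the respective $\widetilde{\beta}$-actions and so descends to the $\mathbb{Z}$-crossed products; this makes $-\rtimes_{\widetilde{\mu}}(G\times\mathbb{Z})$ a functor $\CstarAlg(G\times\mathbb{Z})\to\CstarAlg$.

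Next I would check that this is a crossed product functor in the technical sense and compute its group C*-algebra. Using the standard identifications $A\rtimes_{\max}(G\times\mathbb{Z})\cong(A\rtimes_{\max}G)\rtimes\mathbb{Z}$ and $A\rtimes_r(G\times\mathbb{Z})\cong(A\rtimes_rG)\rtimes\mathbb{Z}$, the natural transformations $q_A\colon A\rtimes_{\max}G\to A\rtimes_\mu G$ and $s_A\colon A\rtimes_\mu G\to A\rtimes_rG$ are $\mathbb{Z}$-equivariant by naturality applied to each $\beta_n$, so applying $-\rtimes\mathbb{Z}$, which preserves surjectivity, yields surjective natural transformations
\[-\rtimes_{\max}(G\times\mathbb{Z})\to -\rtimes_{\widetilde{\mu}}(G\times\mathbb{Z})\to -\rtimes_r(G\times\mathbb{Z})\]
whose composite is $\Lambda$ since $s_A\circ q_A=\Lambda_A$; one also checks that $q\circ\kappa$ is the canonical dense injection of $A\rtimes_{\alg}(G\times\mathbb{Z})$. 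Taking $A=\mathbb{C}$ with trivial action, the induced $\mathbb{Z}$-action on $C^*_\mu G=\mathbb{C}\rtimes_\mu G$ is trivial (functoriality of the identity), and since $B\rtimes_{\operatorname{triv}}\mathbb{Z}\cong B\otimes C^*\mathbb{Z}$ for every C*-algebra $B$ (using that $C^*\mathbb{Z}$ is nuclear and $\mathbb{Z}$ is amenable), we obtain $C^*_{\widetilde{\mu}}(G\times\mathbb{Z})\cong C^*_\mu G\otimes C^*\mathbb{Z}$.

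Finally I would deduce (1) and (2). For (1): if $I\subseteq A$ is a $G\times\mathbb{Z}$-invariant ideal, it is $G$-invariant, so exactness of $-\rtimes_\mu G$ gives a short exact sequence $0\to I\rtimes_\mu G\to A\rtimes_\mu G\to (A/I)\rtimes_\mu G\to 0$; this is a sequence of $\mathbb{Z}$-C*-algebras with $\mathbb{Z}$-equivariant maps, and since the full crossed product $-\rtimes\mathbb{Z}=-\rtimes_{\max}\mathbb{Z}$ is always exact, applying it keeps the sequence exact, which by construction is exactly the sequence for $-\rtimes_{\widetilde{\mu}}(G\times\mathbb{Z})$ at $I$. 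For (2): a $G\times\mathbb{Z}$-equivariant completely positive $\varphi\colon A\to B$ is $G$-equivariant, so the cp-map property of $-\rtimes_\mu G$ gives a completely positive $\varphi\rtimes_\mu G\colon A\rtimes_\mu G\to B\rtimes_\mu G$ intertwining the $\widetilde{\beta}$-actions (again by naturality); applying the full crossed product $-\rtimes\mathbb{Z}$, which is a correspondence functor, produces a completely positive map $A\rtimes_{\widetilde{\mu}}(G\times\mathbb{Z})\to B\rtimes_{\widetilde{\mu}}(G\times\mathbb{Z})$ restricting to $f\mapsto\varphi\circ f$ on $A\rtimes_{\alg}(G\times\mathbb{Z})$, as one sees by unwinding the identification $C_c(G\times\mathbb{Z},A)\subseteq C_c(\mathbb{Z},C_c(G,A))$. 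The only real work here is bookkeeping — verifying that a commuting pair of $G$- and $\mathbb{Z}$-actions passes coherently through $-\rtimes_\mu G$ and that the structure maps and the standard iterated-crossed-product isomorphisms are all $\mathbb{Z}$-equivariant and mutually compatible; this is routine but must be assembled with care, and it is the only potential obstacle.
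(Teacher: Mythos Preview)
Your proposal is correct and follows the same construction as the paper: define $-\rtimes_{\widetilde{\mu}}(G\times\mathbb{Z})$ as the composite $(-\rtimes\mathbb{Z})\circ(-\rtimes_\mu G)$, verify it is a crossed product functor via the iterated-crossed-product identifications for $\max$ and $r$, and deduce exactness as a composition of exact functors. The only notable difference is in (2): you argue directly with the cp-map property (pushing a $G\times\mathbb{Z}$-equivariant cp map through $-\rtimes_\mu G$ and then $-\rtimes\mathbb{Z}$), whereas the paper instead verifies the equivalent \emph{projection property} from \cite[Thm.~4.9]{bew1}, checking that for a $(G\times\mathbb{Z})$-invariant projection $p\in\mathcal{M}(A)$ the inclusion $pAp\hookrightarrow A$ yields an injection after applying $-\rtimes_{\widetilde{\mu}}(G\times\mathbb{Z})$. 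Both routes are short and valid; yours is perhaps more direct given how the correspondence property is introduced in the paper, while the paper's avoids having to track $\mathbb{Z}$-equivariance of the induced cp map and instead just uses that $-\rtimes\mathbb{Z}$ preserves injectivity.
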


\begin{proof}
	First of all, note that the functor $-\rtimes_\mu G$ induces a functor from $\CstarAlg(G\times \mathbb{Z})$ to $\CstarAlg(\mathbb{Z})$, which we 
	again denote by $-\rtimes_\mu G$, in the following way:
	
	For a $(G\times \mathbb{Z})$-C*-algebra $A$ with action $\alpha\colon G\times \mathbb{Z}\to
	\mathrm{Aut}(A)$ we write $\alpha_{G} = \alpha_{G\times \lbrace 0 \rbrace}$ and 
	$\alpha_{\mathbb{Z}} = \alpha_{\lbrace e \rbrace \times \mathbb{Z}}$. Then
	$\alpha_{\mathbb{Z}}(n)$ is a $G$-equivariant *-homomorphism on $(A,\alpha_G)$ for all 
	$n\in \mathbb{Z}$. Hence it induces a *-homomorphism 
	$\alpha_{\mathbb{Z}}(n) \rtimes_\mu G \colon A\rtimes_{\mu,\, \alpha_G } G \to A\rtimes_{\mu,\, \alpha_G}G$ for all $n\in \mathbb{Z}$. Since $-\rtimes_\mu G$ is a functor,
	$\alpha_{\mathbb{Z}}\rtimes_{\mu} G\colon \mathbb{Z}\to \mathrm{Aut}(A\rtimes_{\mu,\, \alpha_{G}} G),\, n \mapsto \alpha_{\mathbb{Z}}(n)\rtimes_\mu G$ defines a group homomorphism.
	Finally, $(A\rtimes_{\mu,\, \alpha_G} G,\, \alpha_{\mathbb{Z}}\rtimes_\mu G)$ defines
	a $\mathbb{Z}$-C*-algebra.
		
	For a $(G\times \mathbb{Z})$-equivariant *-homomorphism $\varphi\colon A\to B$ 
	between $(G\times \mathbb{Z})$-C*-algebras $A$ and $B$ with actions $\alpha$ and $\beta$ we 
	obtain a $\mathbb{Z}$-equivariant *-homomorphism
	\[\varphi\rtimes_\mu G\colon (A\rtimes_{\mu,\, \alpha_G}G,\, \alpha_\mathbb{Z}\rtimes_\mu G) \to (B\rtimes_{\mu,\, \beta_G}G,\, \beta_\mathbb{Z}\rtimes_\mu G)\,.\]

	Hence we define $-\rtimes_{\widetilde{\mu}}(G\times \mathbb{Z})$ to be $(-\rtimes \mathbb{Z})\circ (-\rtimes_\mu G) \colon \CstarAlg(G\times \mathbb{Z})\to \CstarAlg$. By construction, $C_{\widetilde{\mu}}^*(G\times \mathbb{Z}) = 
 	(\mathbb{C}\rtimes_\mu G) \rtimes \mathbb{Z} \cong C_\mu^* G \otimes C^*\mathbb{Z}$.

 	To prove that $-\rtimes_{\widetilde{\mu}}(G\times \mathbb{Z})$ is a crossed 
 	product functor, let $q\colon -\rtimes_{\max} G \to -\rtimes_\mu G$ and
	$s \colon -\rtimes_\mu G \to -\rtimes_{r} G$ be the natural transformations. They induce
	natural transformations
	\begin{alignat*}{3}	
	\widetilde{q} & \colon & (-\rtimes \mathbb{Z}) \circ (-\rtimes_{\max}G) & \to (-\rtimes \mathbb{Z}) \circ (\rtimes_{\mu} G)\\
	\widetilde{s} & \colon & (-\rtimes \mathbb{Z}) \circ
	(-\rtimes_\mu G) & \to (-\rtimes \mathbb{Z}) \circ (-\rtimes_r G)
	\end{alignat*}
	consisting of quotient maps.
	Furthermore, there exist natural isomorphisms between $-\rtimes_{\max} (G\times \mathbb{Z})$
	and $(-\rtimes \mathbb{Z}) \circ (-\rtimes_{\max} G)$ and 
	between
	 $-\rtimes_{r} (G\times \mathbb{Z})$
	and $(-\rtimes \mathbb{Z}) \circ (-\rtimes_{r} G)$ \cite[Prop.~3.11]{williams}. Hence
	$-\rtimes_{\widetilde{\mu}}(G\times \mathbb{Z})$ is a crossed product functor.

	Note that if $-\rtimes_\mu G$ is exact, then the new functor from $\CstarAlg(G\times \mathbb{Z})$ to $\CstarAlg(\mathbb{Z})$ is again exact. Then 
	$-\rtimes_{\widetilde{\mu}}(G\times \mathbb{Z})$ is, as a composition of exact functors,
 	again exact. 
 	
 	Therefore,
 	it remains to show that $\rtimes_{\widetilde{\mu}}(G\times \mathbb{Z})$ is a
 	correspondence crossed product functor if $-\rtimes_\mu G$ is one. 
	Let $A$ be a $(G\times \mathbb{Z})$-C*-algebra with action $\alpha$ and 
	$p\in \mathcal{M}(A)$ be a $(G\times \mathbb{Z})$-invariant projection. Then $p$ is
	$G$-invariant and since $-\rtimes_\mu G$ is a correspondence crossed product functor,
	the inclusion $pAp \to A$ induces an injective *-homomorphisms $pAp\rtimes_{\mu,\, \alpha_G}G
	\to A\rtimes_{\mu,\, \alpha_G} G$ (see \cite[Thm.~4.9]{bew1}). Because $-\rtimes \mathbb{Z}$ 
	maps injective $\mathbb{Z}$-equivariant *-homomorphisms to injective *-homomorphisms, we finally 
	have that
	$pAp \to A$ induces an injective *-homomorphism $pAp\rtimes_{\widetilde{\mu}}(G\times \mathbb{Z}) \to A\rtimes_{\widetilde{\mu}}(G\times \mathbb{Z})$. Hence $-\rtimes_{\widetilde{\mu}}(G\times \mathbb{Z})$ satisfies the projection property, which implies by 
	\cite[Thm.~4.9]{bew1}
	that $-\rtimes_{\widetilde{\mu}}(G\times \mathbb{Z})$ is a correspondence
	crossed product functor.
\end{proof}

\section{Application to the strong Novikov conjecture}

In this section we will revisit the proof of Theorem~\ref{hanke_schick_intro} in Section~\ref{subsec_original_proof}, and then modify it in Section~\ref{subsec_modifying_proof} such that we conclude the stronger statement claimed in Theorem~\ref{main_thm_intro} in the introduction. Commutativity of the big diagram occurring in the proof of Theorem~\ref{main_thm_intro} is proven in the separate Section~\ref{subsec_commutativity_diag}.

\subsection{The original proof of Hanke and Schick}
\label{subsec_original_proof}

We will start with a more general setup than the one of Theorem~\ref{hanke_schick_intro} in the introduction: we will first follow the exposition given in \cite{hanke_K_area} about elements of infinite $K$-area.

Recall that there exists a natural pairing $K^*(X) \otimes K_*(X) \to \IZ$ between the $K$-theory and $K$-homology of a compact Hausdorff space $X$. It can be described $K\!K$-theoretically as the Kasparov product $K\!K_{-*}(\IC,C(X)) \otimes K\!K_*(C(X),\IC) \to K\!K_0(\IC,\IC) \cong \IZ$ under the identifications 
$K^*(X) \cong K\!K_{-*}(\IC,C(X))$ and $K_*(X) \cong K\!K_*(C(X),\IC)$. Concrete formulas for this pairing may be found in, e.g., \cite[Sec.\ 8.7]{higson_roe}. For any ($\sigma$-unital) C*-algebra $A$, the Kasparov product also induces a pairing
\[
 \langle -,- \rangle \colon KK(\IC,C(X)\otimes A)\otimes KK(C(X),\IC)\to KK(\IC, A)
\]
which is used in the following definition.

\begin{defn}[Hanke~{\cite[Defn.~3.5]{hanke_K_area}}]
Let $M$ be a closed, smooth manifold and let us consider a $K$-homology class $h \in K_0(M)$.

We say that $h$ has \emph{infinite $K$-area}, if there exists a Riemannian metric on $M$  so that the following holds: for each $\varepsilon > 0$ there is a unital C*-algebra $A_\varepsilon$ and a finitely generated Hilbert $A_\varepsilon$-module bundle $E_\varepsilon \to M$ which carries a holonomy representation which is $\varepsilon$-close to the identity
and satisfies
\begin{equation}
\label{eq_infinite_K_area}
\langle [E_\varepsilon], h\rangle \not= 0 \in K_0(A_\varepsilon) \otimes \IQ
\end{equation}
where $[E_\varepsilon] \in \KK(\IC, C(M) \otimes A_\varepsilon)$ is the element represented by $E_\varepsilon \to M$.
\end{defn}
Here the class $[E_{\varepsilon}]\in K_0(C(X)\otimes A_{\varepsilon})\cong KK(\IC,C(X)\otimes A_{\varepsilon})$ is represented  by the finitely generated projective $C(X)\otimes A_{\varepsilon}\,$-module of the sections of $E_{\varepsilon}$.
Also in the previous definition, a holonomy representation $\mathcal{H}$ on $E \to M$ is \emph{$\varepsilon$-close to the identity}, if for each $x \in M$ and each contractible, closed, piecewise smooth loop $\gamma$ based at $x \in M$ the following holds: Let $F\colon D^2 \to M$ be a piecewise smooth map which restricts to $\gamma$ on $S^1 = \partial D^2$. Then
\[
\|\mathcal{H}(\gamma) - \id_{E_x}\|_{\op} \le \varepsilon \cdot A(F(D^2))\,,
\]
where $A(D)$ denotes the area of $F(D^2)$.

\begin{rem}
In the reference \cite{hanke_K_area} the notion \emph{$\varepsilon$-close to the identity at the scale $l$} was used instead the one we use here. But the problem is that with the notion \emph{at scale $l$} the proof of Proposition~3.12 in loc.\ cit.\ does not seem to work. It does work with the version of the notion that we present here. Furthermore, Proposition~3.4 in loc.\ cit.\ still works with this slightly stronger version of this notion of \emph{$\varepsilon$-close to the identity} and hence we are still fine in the low degree setting of Theorem~\ref{thm_low_degree_exotic}.

That the original notion \emph{$\varepsilon$-close to the identity at the scale $l$} is not sufficient was communicated to us by Benedikt Hunger, and the above used new version of this notion by Bernhard Hanke.
\end{rem}

Let $\alpha\colon K_*(M) \to K_*(C^\ast_\mathrm{max} \pi_1(M))$ be the higher index map. We briefly recall now how it is constructed.
Let $\mathcal{M\! F}:= \widetilde{M} \times_{\pi_1(M)} C^*_{\operatorname{max}}\pi_1(M)$ be the Mishchenko--Fomenko bundle. It is a bundle of finitely generated Hilbert $C^*_{\textrm{max}}\pi_1(M)$-modules and hence defines a class
$[\mathcal{M\! F}] \in K_0(C(M)\otimes C_\mathrm{max}^*\pi_1(M)) $.
Then $\alpha \coloneqq \langle [\mathcal{M}\mathcal{F}], - \rangle$ is the index pairing with this class. If $M$ is a closed aspherical manifold, then the strong Novikov conjecture predicts it to be rationally injective.

\begin{thm}[Hanke~{\cite[Thm.~3.9]{hanke_K_area}}]\label{thm_hanke}
Let $M$ be a closed connected smooth manifold and let $h \in K_0(M)$ be of infinite $K$-area.

Then we have
\[\alpha(h) \not= 0 \in K_0(C^\ast_{\mathrm{max}} \pi_1(M)) \otimes \IR\,.\]
\end{thm}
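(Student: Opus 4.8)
The plan is to reduce the statement to the Fell-absorption picture for the maximal crossed product and then use the infinite $K$-area hypothesis together with a compression/holonomy argument, exactly as in Hanke's original proof but organized so that the exotic refinement later drops in cleanly. Concretely, let $\pi = \pi_1(M)$ and fix a Riemannian metric witnessing infinite $K$-area, so that for every $\varepsilon>0$ we have a unital C*-algebra $A_\varepsilon$ and a finitely generated Hilbert $A_\varepsilon$-module bundle $E_\varepsilon \to M$ with holonomy $\varepsilon$-close to the identity and $\langle [E_\varepsilon], h\rangle \neq 0$ in $K_0(A_\varepsilon)\otimes\IQ$. The bundle $E_\varepsilon$ is classified by a map into a classifying space for $A_\varepsilon$-module bundles and, because its holonomy is almost trivial, on the level of $\pi_1$ it is governed by a representation $\rho_\varepsilon\colon \pi \to \mathcal{U}(A_\varepsilon)$ (or a twisted/projective version thereof) which is $\varepsilon$-close to the trivial representation in the appropriate sense.

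First I would package the pairing $\langle[\mathcal{MF}],-\rangle$ and $\langle[E_\varepsilon],-\rangle$ through a single diagram. The Mishchenko--Fomenko bundle is $\widetilde M \times_\pi C^*_{\max}\pi$, and the representation $\rho_\varepsilon$ induces a $^*$-homomorphism $\sigma_\varepsilon\colon C^*_{\max}\pi \to A_\varepsilon$ (by universality of the full group C*-algebra) sending the associated bundle $\mathcal{MF}$ to a bundle $E_\varepsilon'$ which is the one underlying $E_\varepsilon$ — or at least has the same class in $K_0(C(M)\otimes A_\varepsilon)$. Hence $\langle [E_\varepsilon],h\rangle = (\sigma_\varepsilon)_*\langle[\mathcal{MF}],h\rangle = (\sigma_\varepsilon)_*(\alpha(h))$ in $K_0(A_\varepsilon)\otimes\IQ$. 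If $\alpha(h)$ were zero in $K_0(C^*_{\max}\pi)\otimes\IR$ it would be zero already in $K_0(C^*_{\max}\pi)\otimes\IQ$ (rational vector space), and then all the pushforwards $(\sigma_\varepsilon)_*\alpha(h)$ would vanish, contradicting $\langle[E_\varepsilon],h\rangle\neq 0$. So the core of the argument is just: \emph{the classes $[E_\varepsilon]$ are pushforwards of $\alpha(h)$ along C*-homomorphisms out of $C^*_{\max}\pi$}, which is where the universal property of the maximal group C*-algebra is essential and where the condition $c\in\Lambda^*(\pi)$ will later enter (through the fact that low-degree cohomology classes are detected by almost-flat, hence almost-trivial-holonomy, bundles of the kind produced in the $K$-area formalism).

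The main obstacle — and the reason ``$\varepsilon$-close to the identity'' appears — is that a genuinely flat $A_\varepsilon$-bundle with nontrivial holonomy need not lift a representation that factors through $C^*_{\max}\pi$ in a way compatible with the index pairing; one only gets an \emph{approximate} representation, i.e. a map $\pi\to\mathcal{U}(A_\varepsilon)$ that fails multiplicativity by a small amount controlled by $\varepsilon$ and the areas of filling disks. So the real work is to show that for $\varepsilon$ small enough the associated ``almost $^*$-homomorphism'' $C^*_{\max}\pi \dashrightarrow A_\varepsilon$ can be corrected to an honest $^*$-homomorphism inducing the same map on the relevant $K$-theory class, or more precisely that the index pairing $\langle[E_\varepsilon],h\rangle$ is a continuous function of the holonomy and hence is computed by the honest homomorphism coming from the $\varepsilon=0$ limit. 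This is the content that forces us into the stronger ``$\varepsilon$-close to the identity'' notion discussed in the remark above: one needs $\|\mathcal H(\gamma)-\id\|$ bounded linearly in area, uniformly, so that the pseudo-representation is close enough to a genuine one for the standard spectral-projection/quasi-idempotent argument (replace an almost-idempotent by the nearby genuine idempotent via holomorphic functional calculus) to go through and to identify the resulting $K_0$-element with $(\sigma_\varepsilon)_*\alpha(h)$.

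Finally I would assemble these pieces: assume for contradiction $\alpha(h)=0$ in $K_0(C^*_{\max}\pi)\otimes\IR$; deduce $\alpha(h)=0$ in $K_0(C^*_{\max}\pi)\otimes\IQ$; for each $\varepsilon>0$ choose the data $(A_\varepsilon, E_\varepsilon, \rho_\varepsilon)$ from infinite $K$-area with $\varepsilon$ small enough that the correction argument applies; obtain a $^*$-homomorphism $\sigma_\varepsilon\colon C^*_{\max}\pi\to A_\varepsilon$ with $(\sigma_\varepsilon\otimes\id)_*[\mathcal{MF}] = [E_\varepsilon]$ in $K_0(C(M)\otimes A_\varepsilon)$, hence $\langle[E_\varepsilon],h\rangle = (\sigma_\varepsilon)_*\alpha(h) = 0$ in $K_0(A_\varepsilon)\otimes\IQ$; this contradicts \eqref{eq_infinite_K_area}. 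Therefore $\alpha(h)\neq 0$ in $K_0(C^*_{\max}\pi)\otimes\IR$, which is the claim. I expect the only genuinely delicate point to be the compatibility of the pairing with the limit $\varepsilon\to 0$ — the rest is bookkeeping with Kasparov products and the universal property of $C^*_{\max}\pi$.
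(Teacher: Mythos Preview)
Your argument has a genuine gap at the step where you propose to ``correct'' the almost-representation $\rho_\varepsilon\colon \pi\to\mathcal{U}(A_\varepsilon)$ to an honest $^*$-homomorphism $\sigma_\varepsilon\colon C^*_{\max}\pi\to A_\varepsilon$. No such correction theorem exists in general: an almost-flat bundle $E_\varepsilon$ is \emph{not} associated to any representation of $\pi$ in $A_\varepsilon$, and almost-multiplicative maps from a group into a C*-algebra are typically \emph{not} close to genuine representations (this is the whole circle of ideas around almost-commuting unitaries, quasidiagonality, and stability, where counterexamples abound). The functional-calculus trick you invoke corrects almost-idempotents to idempotents, but it does nothing for almost-representations. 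So for a fixed $\varepsilon$ there is simply no map $C^*_{\max}\pi\to A_\varepsilon$ through which $\langle[E_\varepsilon],h\rangle$ factors, and your contradiction never fires.

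The paper's proof avoids this problem by a different mechanism. One takes the whole sequence $(E_{1/k})_k$ at once, assembles it into a single Hilbert $A$-module bundle $V\to M$ with $A=\prod_k A_{1/k}$ (the uniform control on the holonomies is needed precisely to make this product bundle well defined), and then passes to the quotient $Q=A/A'$ with $A'=\bigoplus_k A_{1/k}$. Because the curvatures of the $E_{1/k}$ tend to zero, the induced bundle $W=V\otimes_\psi Q$ is \emph{genuinely flat}, and only now does one obtain an honest holonomy representation $\pi\to\mathcal{U}(Q)$ and hence a $^*$-homomorphism $\phi\colon C^*_{\max}\pi\to Q$. Commutativity of the resulting diagram, together with the $K$-theory long exact sequence for $0\to A'\to A\to Q\to 0$ and the fact that $K_0(A')\cong\bigoplus_k K_0(A_{1/k})$, forces $\psi_*\langle[V],h\rangle\neq 0$ in $K_0(Q)$: if it vanished, $\langle[V],h\rangle$ would lift to $K_0(A')$ and hence have only finitely many nonzero components, contradicting $\langle[E_{1/k}],h\rangle\neq 0$ for all $k$. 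The asymptotic-to-exact passage happens in the \emph{coefficient algebra} (via the quotient $A\to Q$), not by perturbing any individual $\rho_\varepsilon$.
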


\begin{proof}
We will give a sketch of Hanke's proof since we will need the details of it later. We will provide the definitions of the appearing objects as we go along.

Hanke constructs a commutative diagram
\begin{equation}
\label{eq_diagram_hanke}
\xymatrix{
K_0(M) \ar[rr]^-{\langle [\mathcal{M\! F}],-\rangle}_-{\alpha} \ar[d]_{=} & & K_0(C^\ast_\mathrm{max} \pi_1(M)) \ar[rr]^-{\phi_\ast} & & K_0(Q) \ar[d]_{=}\\
K_0(M) \ar[rr]^-{\langle [V],-\rangle} & & K_0(A) \ar[rr]^-{\psi_\ast} & & K_0(Q)
}
\end{equation}
and shows that $h$ is sent to something non-zero in the lower-right corner. The top-left arrow is the map $\alpha$ represented as twisting by the Mishchenko--Fomenko bundle $\mathcal{M\! F}$ as explained before.
The steps in Hanke's proof are the following:
\begin{enumerate}
\item\label{Point_1} 
Since $h$ has infinite $K$-area we can find, directly by definition, unital C*-algebras $A_{1/k}$ for every $k \in \IN$ 
and finitely generated Hilbert $A_{1/k}\,$-bundles $E_{1/k}$ over $M$ carrying a holonomy representation which is $1/k$-close to the identity. 

In the above Diagram~\ref{eq_diagram_hanke} we have that $A = \prod_{k=1}^\infty A_{1/k}$ (norm bounded sequences) and 
$V \to M$ is a finitely generated Hilbert $A$-module bundle arising from the sequence $(E_{1/k})_k$
via a direct product construction. More precisely, Hanke shows in \cite[Prop.~3.12]{hanke_K_area} that thanks to uniform bounds on the holonomy representations we can choose a cocycle of transition functions for every $E_{1/k}$ with its Lipschitz constants bounded independently of $k$. Hence we have a well defined bundle $V$ whose transition functions are the one for $E_{1/k}$ for all $k$ placed in diagonal form. In other words the $k$-component of $V$ is isomorphic to $E_{1/k}$ as a Hilbert $A_{1/k}$-module bundle. 

It is important to stress that the existence of this construction is ensured by the fact that all the holonomy representations of the component bundles $E_{1/k}$ are uniformly close to the identity. If such a Lipschitz uniformity in the transition functions is not ensured, such a construction cannot be performed as an example in loc.~cit.~shows.

Also an important point in the construction is the fact that we can assume the fiber of $E_{1/k}$ to be isomorphic to $q_k A_{1/k}$ with $q_k$ a projection in $A_{1/k}$ (this is obtained up to tensoring with matrices). It follows that the typical fiber of $V$ is just $qA$ with $q=(q_k)_k \in A$.

\item\label{defn_flat_bundle_W} Let $A^\prime$ be the closed ideal $\bigoplus A_{1/k}$ in $A$ and $Q$ be the quotient C*-algebra $A/A'$ with quotient map $\psi\colon A\to Q$. We define $\psi_k\colon A \to A_{1/k}$ as the projection onto the $k$-th component.

We get a bundle of finitely generated Hilbert $Q$-modules with fiber $\psi({q})Q$, namely $W \coloneqq V\otimes_{\psi} Q$. Thanks to the crucial property on the holonomy representations of the component bundles one proves that $W$ is flat and associated with a unitary holonomy representation $\phi\colon \pi_1(M) \to \operatorname{Hom}_Q(\psi(q)Q,\psi(q)Q) =\psi(q)Q\psi(q).$ Composing with the inclusion $\psi(q)Q\psi(q) \hookrightarrow Q$ and passing to the maximal group C*-algebra we get a morphism
\begin{equation}\label{morphismpi}
C^\ast_{\mathrm{max}} \pi_1(M) \to Q\,,
\end{equation}
see \cite[Prop.\ 3.13]{hanke_K_area}.
This is the morphism $\phi$ in \eqref{eq_diagram_hanke}.

\item\label{step_three} The composition $K_0(M) \xrightarrow{\langle [V],-\rangle} K_0(A) \xrightarrow{(\psi_k)_{\ast}} K_0(A_{1/k})$ sends the element $h$ to $\langle [E_{1/k}],h\rangle \in K_0(A_{1/k})$ which is rationally non-zero by assumption on $h$. Therefore, under the map
\begin{equation}
\label{eq_map_chi}
\chi \colon K_0(A) \to \prod_{k \in \IN} K_0(A_{1/k}), \quad z \mapsto ((\psi_k)_{\ast}z)_{k=1, 2, \ldots}
\end{equation}
the element $z := \langle [V],h\rangle$ is sent to a sequence all of whose components are non-zero.

\item We consider the short exact sequence $0 \to A^\prime \to A \to Q \to 0$ which provides the long exact sequence
\[\cdots \xrightarrow{\partial} K_0(A^\prime) \to K_0(A) \to K_0(Q) \xrightarrow{\partial} \cdots\]
Assuming that $\psi_{\ast}(z) = 0 \in K_0(Q)$ we get a lift of $z$ to $K_0(A^\prime)$. Because $K$-theory commutes with direct sums, we have $K_0(A^\prime) \cong \bigoplus_{k=1}^\infty K_0(A_{1/k})$. But this implies that the sequence $\chi(z) = ((\psi_k)_{\ast}z)_{k=1, 2, \ldots}$ is non-zero for only finitely many $k \in \IN$, which is a contradiction. Hence $\psi_{\ast}(z) \not= 0$, and from the above diagram we therefore get that $\alpha(h) \not= 0$ in $K_0(C^\ast_{\mathrm{max}} \pi_1(M))$.
\item The last step entails checking that the above arguments go through if we tensor everything with $\IR$. This is straightforward.\qedhere
\end{enumerate}
\end{proof}

\subsection{Classes of degree \texorpdfstring{$2$}{2} have infinite \texorpdfstring{$K$}{K}-area}\label{2classesinfinite}

Let $G$ be a finitely presented group.
As already said, we prove Theorem \ref{main_thm_intro} by modifying the proof of  Theorem \ref{thm_hanke}. The Diagram \eqref{eq_diagram_hanke} will be replaced by a bigger one involving a closed manifold $M$, a corresponding bundle $V$ and a quotient algebra $Q$. The bundle $V$ will be in turn defined by the product construction from a collection of algebras $A_{1/k}$ and bundles $E_{1/k}$ as before. 
In this section we recall the procedure of Hanke and Schick \cite{hanke_schick} showing how all these ingredients are created out of  a pair 
\begin{equation}\label{pairgroup}
(h,c) \quad \textrm{with:}\quad h \in K_0(BG)\,, \quad c\in H^2(BG;\mathbb{Z})\quad \textrm{and}\quad \langle c, \ch(h)\rangle \neq 0\,.
\end{equation}

For the commutativity of the new diagram, the very specific form of $A_{1/k}$, of $E_{1/k}$ and in particular of the natural traces that the algebras $A_{1/k}$ possess will be essential.

Let $(h,c)$ be as above. 
By the Baum--Douglas model for $K$-homology, $h$ is represented in terms of a finite-dimensional Hermitian vector bundle $S \to M$ over a closed, connected spin-manifold $M$ equipped with a continuous map $f\colon M \to BG$, i.e. $f_*(S \cap [\slashed{D}_M])=h$. Here $[\slashed{D}_M] \in K_0(M)$ is the canonical $K$-homology class of the Dirac operator $\slashed{D}_M$ of the spin structure of $M$, and $S \cap [\slashed{D}_M]$ is its cap product with the $K$-theory class of $S$.
 Since $G$ is finitely presented, $f$ can be taken to induce isomorphism of fundamental groups\footnote{This is the reason why we restrict in this argument to finitely presented groups $G$.}.
Denote by $\pi \colon \widetilde{M}\longrightarrow M$ the universal cover of $M$. Thanks to the assumptions on $f$, we identify $G$ with the deck group of $\widetilde{M}$.
Hanke and Schick construct 
on the associated Hilbert bundle $\widetilde{M}\times_G \ell^2(G)$, where $G$ acts diagonally (by the right regular representation on $\ell^2(G)$ and deck transformations on $\widetilde{M}$), a family $( \nabla^{1/k})_k$ of connections with curvature going to zero with $k \to \infty$. This is the reason for the Hilbert $Q$-module $W$ being flat (see Point~\ref{defn_flat_bundle_W} of the proof of Theorem~\ref{thm_hanke}). 

More precisely, we have a natural left action $\varphi \mapsto g\cdot \varphi$ of $G$ on the space of forms on $\widetilde{M}$ with values $\operatorname{End}(\ell^2(G))$ and a 
connection form $\eta \in \Omega^1(\smash{\widetilde{M}},i \mathbb{R})$.
Let us recall how $\eta$ is defined: We choose a Hermitian connection on the line bundle $L\to M$ classified by $f^*(c)$. Since the universal cover of $BG$ is contractible, $\pi^*(L)$ is trivializable. After choosing a unitary trivialization, $\eta$ is the connection form of the image of the (induced) connection on $\pi^*(L)$ under it.

Using the $G$--action on the forms on $\smash{\widetilde{M}}$, we can define a natural family of invariant connection forms $(\omega_{1/k})_k$. These are the forms that restrict to $1/k \, (g \cdot \eta)$ on the sub-bundle $\widetilde{M} \times \mathbb{C}\, g$. 
 By $G$-invariance, a corresponding family of connections on $\widetilde{M} \times_G \ell^2(G)$ is well defined. 
Let us choose a reference point $p \in M$ and a point $\widetilde{p} \in \widetilde{M}$ in its fiber. In this way, the fiber of $\widetilde{M}\times_G \ell^2(G)$ at $p$ is identified with $\ell^2(G)$.
We are ready to define the algebras: for every $k$, the algebra $A_{1/k}$ is defined by the norm closure inside $\IB(\ell^2(G))$ of all the operators which arise by parallel translation for $\nabla^{1/k}$ along  piecewise smooth loops. All these algebras are then naturally represented on the same Hilbert space and come with natural traces: the vector states
\[\tau_{1/k}(\vartheta) \coloneqq \langle \vartheta(e),e\rangle \text{ for } \vartheta \in A_{1/k}\,.\]
Here $e \in \ell^2(G)$ is the characteristic function of the identity element $e$ of $G$ and $\langle -, -\rangle$ the inner product of $\ell^2(G)$.
The construction has the following property which will be important for us later and says that the traces are akin to the trace $\tau_e$ on $C_r^*G$.

\begin{Property}\label{prop_traces_supported_on_e}
Let $\phi_{\gamma} \in A_{1/k}$ be the parallel translation map associated to a loop $\gamma$ which represents an element in $\pi_1(M,p)$. Then $\tau_{1/k}(\phi_{\gamma})=0$ if $\gamma$ is not trivial in $\pi_1(M,p)$.
\end{Property}
\begin{proof}
This is also mentioned in the beginning of the proof of  \cite[Lem.~2.2]{hanke_schick}. 
 Let $\gamma$ a loop based on $p$; to compute $\tau_{1/k} (\phi_{\gamma})$ we lift $\gamma$ to a path $\widetilde{\gamma}$ in $\widetilde{M}$ and we compute the parallel translation along $\widetilde{\gamma}$ with respect to the connection associated to $\omega_{1/k}$. We call this operator of parallel translation on the covering $\phi_{\widetilde{\gamma}}$.
  Now assume the class of $\gamma$ in the fundamental group is $g\neq e$, then $\widetilde{\gamma}$ is not a loop and its endpoint is exactly $\widetilde{p} \cdot g$. Since the connection forms preserve the sub-bundles $\mathbb{C} \, g$ 
the vector $\phi_{\gamma} e$ is represented in $\widetilde{M} \times \ell^2(G)$ by the couple $(\widetilde{p} \cdot g, \lambda e)$ for some number $\lambda$. The corresponding inner product computing the trace is $\lambda \langle e, {g^{-1}}\rangle =0$.
\end{proof}
We now construct the bundles $E_{1/k} \to M$. For ease of notation, call $\zeta \coloneqq \widetilde{M}\times_{G}\ell^2(G)$. Then $E_{1/k}$ is the bundle whose fiber at $x$ is the norm closure inside $\IB(\zeta_p,\zeta_x)$ of all the operators which are parallel transport isomorphisms of $\nabla^{1/k}$ along smooth paths joining $p$ and $x$. The $A_{1/k}$-module structure is clear and given by right composition. One can check that every $E_{1/k}$ is locally trivial and equipped with a natural $A_{1/k}$-linear connection induced by $\nabla^{1/k}$.

By construction, for every $k$ the pairing
$\langle [E_{1/k}], S \cap [\slashed{D}_M] \rangle$ is non-trivial, showing that $S \cap [\slashed{D}_M]$ has infinite $K$-area, see \eqref{eq_infinite_K_area}. This is seen by using the natural traces $\tau_{1/k}$ that induce real valued functionals $\tau_{1/k} \colon K_0(A_{1/k}) \to \mathbb{R}$ satisfying
$$\tau_{1/k}(\langle [E_{1/k}], S \cap [\slashed{D}_M] \rangle)\neq 0\,.$$

Summing up: for a finitely presented group $G$ and a pair $(h,c)$  as in \eqref{pairgroup}, we construct the sequences $A_{1/k}$ and $E_{1/k}$ needed to show that the $K$-homology class $S \cap [\slashed{D}_M]$ of $M$ has infinite $K$-area. In particular, for fixed $k$ the non-triviality of the pairing $\langle [E_{1/k}],S \cap [\slashed{D}_M]\rangle$ is witnessed by a very particular trace on $A_{1/k}$.

Finally, note that $f_*(S \cap [\slashed{D}_M])=h$ for a continuous map $f$ inducing an isomorphism on fundamental groups $\pi_1(M) \cong \pi_1(BG)$. The latter implies
\[
\alpha(S \cap [\slashed{D}_M]) = \mu^{\mathrm{BC}}(f_*(S \cap [\slashed{D}_M]))\,,
\]
where $\alpha$ is the higher index map from Diagram~\ref{eq_diagram_hanke} and $\mu^{\mathrm{BC}} \colon K_*(BG) \to K_*(C^*_{\max} G)$ is the analytic assembly map.

\subsection{Incorporating Fell's absorption principle}
\label{subsec_modifying_proof}

The proof of Theorem~\ref{thm_low_degree_exotic} works with any exact correspondence crossed product functor. The properties of such functors, that we will need in the proof, are the following ones:

\begin{properties}\label{properties_crossed_product}
Fix a discrete group $G$. Let $-\rtimes_\mu G$ be an exact correspondence crossed product functor.

Then it has the following properties:
\begin{enumerate}
\item\label{properties_group_alg_between_max_red} The corresponding group C*-algebra $C^*_\mu G := \IC \rtimes_\mu G$ is a completion of $\IC G$ and the identity on $\IC G$ extends to surjective *-homomorphisms $C^*_{\max} G \to C^*_\mu G \to C^*_r G$.

This property holds by definition of crossed product functors.
\item\label{properties_between_max_red} For any C*-algebra $A$ with the trivial $G$-action, the crossed product $A \rtimes_\mu G$ is a C*-completion of the algebraic tensor product $A \odot C^*_\mu G$ and the identity map on $A \odot C^*_\mu G$ extends to surjective *-homomorphisms
\[A \otimes_{\max} C^*_\mu G \to A \rtimes_\mu G \to A \otimes_{\mathrm{min}} C^*_\mu G\,.\]
Because of this we denote $A \rtimes_\mu G =: A \otimes_\mu C^*_\mu G$.

This property is exactly Proposition~\ref{prop_is_tensor_product}.
\item\label{properties_exact} The functor $- \otimes_\mu C^*_\mu G$ is exact, i.e., for every exact sequence $0 \to I \to A \to Q \to 0$ of $C^*$-algebras with the trivial $G$-action we get an exact sequence
\[0 \to I \otimes_\mu C^*_\mu G \to A \otimes_\mu C^*_\mu G \to Q \otimes_\mu C^*_\mu G \to 0\,.\]

This is true since we assume $-\rtimes_\mu \, G$ to be exact. Note that it is actually equivalent to requiring that $- \otimes_\mu C^*_{\mu}G$ is exact by Corollary~\ref{cor:exact-trivial}.
\item\label{properties_coprod} The coproduct $\Delta\colon \IC G \to \IC G \odot \IC G$, $\sum a_g g \mapsto \sum a_g (g \otimes g)$ extends continuously to a $^*$-homomorphism
\[\Delta\colon C_\mu^* G \to C_{\max}^* G \otimes_\mu C_\mu^* G\,.\]

This follows from Lemma~\ref{lem_strong_fell}. Note that because $G$ is a discrete group, $C_{\max}^*G$ and $C_\mu^* G$ are unital C*-algebras. Hence $C_{\max}^*G \otimes_\mu C_\mu^*G$ is also unital and so its multiplier algebra coincides with it.

Note that this property in particular requires that the coproduct extends to a map $C^*_\mu G\to C^*_{\max}G\otimes_{\min} C^*_\mu G$. And this is equivalent to require that the dual space $(C^*_\mu G)^\prime$ is (isomorphic to) an ideal in the Fourier-Stieltjes algebra  $B(G)=(C^*_{\max}G)^\prime$, see \cite[Cor.~3.13]{KLQ-Exotic}.
\item\label{properties_direct_sum} The canonical map
\[\bigoplus_{k \in \IN} \Big(A_k \otimes_\mu C^*_\mu G\Big) \to \Big(\bigoplus_{k \in \IN} A_k \Big) \otimes_\mu C^*_\mu G\]
is an isomorphism.

This property is exactly Lemma~\ref{lem-directsums}.
\qedhere
\end{enumerate}
\end{properties}

We can now generalize the result of Hanke--Schick \cite{hanke_schick}. Let $G$ be a finitely presented group\footnote{Hanke and Schick first prove their theorem for finitely presented groups, and then use that every discrete group is a filtered colimit of finitely presented groups to generalize their result to all discrete groups. But in our situation, since the $K$-theory of the reduced group C*-algebra $C^*_r(G)$ -- and of $C^*_{\varepsilon}(G)$ -- is not known to be functorial for arbitrary group homomorphisms, we can not carry out the last step generalizing to all discrete groups.} and denote by $\Lambda^\ast(G) \subset H^\ast(BG;\IQ)$ the subring generated by $H^{\le 2}(BG;\IQ)$. Recall further the homological Chern character $\ch\colon K_*(BG)\to H_*(BG;\IQ)$; a definition of it using the Baum--Douglas model for $K$-homology may be found in \cite[§11]{baum_douglas}. Note that we are also using the Baum--Douglas model for $K$-homology in the proof below.

\begin{thm}\label{thm_low_degree_exotic}
Let $h \in K_*(BG)$ be such that there is $c \in \Lambda^\ast(G)$ with $\langle c,\ch(h)\rangle \not= 0$, and let $-\rtimes_\mu G$ be an exact correspondence crossed product functor.
Then $h$ is not mapped to zero under the assembly map $K_*(BG) \to K_*(C^\ast_\mu G) \otimes \IR$.
\end{thm}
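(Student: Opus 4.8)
The plan is to follow Hanke and Schick's argument as recalled in Sections~\ref{subsec_original_proof} and~\ref{2classesinfinite}, but to replace the commutative Diagram~\eqref{eq_diagram_hanke} with an enlarged one in which the algebras carrying $C^*_{\max}$-module structures are systematically tensored with $C^*_\mu G$ on the right, using the tensor-product functor $-\otimes_\mu C^*_\mu G$ from Properties~\ref{properties_crossed_product}. First I would reduce to the case $\langle c,\ch(h)\rangle\neq 0$ with $c\in H^2(BG;\IZ)$ by the same splitting-off argument Hanke and Schick use: the subring $\Lambda^*(G)$ is generated by degree-$\le 2$ classes, a nonzero pairing against $\ch(h)$ forces a nonzero pairing of $\ch(h)$ against a product $c_1\cdots c_r$ of degree-$\le 2$ generators, and one reduces the degree-$1$ factors away (they come from maps to tori, handled by the $G\times\IZ$ trick) and replaces products of degree-$2$ classes by a single degree-$2$ class after passing to a suitable finite cover / auxiliary manifold, exactly as in \cite{hanke_schick}. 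This is where Lemma~\ref{lem_extension_crossed_product} enters: it lets us extend $-\rtimes_\mu G$ to $-\rtimes_{\widetilde\mu}(G\times\IZ)$ with $C^*_{\widetilde\mu}(G\times\IZ)\cong C^*_\mu G\otimes C^*\IZ$, preserving exactness and the correspondence property, so the degree-$1$ classes cause no trouble.

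Next, for a pair $(h,c)$ with $c\in H^2(BG;\IZ)$ I would invoke the construction of Section~\ref{2classesinfinite}: one obtains a closed connected spin manifold $M$ with $f\colon M\to BG$ inducing an isomorphism on fundamental groups and $f_*(S\cap[\slashed D_M])=h$, a sequence of unital C*-algebras $A_{1/k}$ represented on $\ell^2(G)$ with the natural vector-state traces $\tau_{1/k}$ (which vanish on parallel-transport operators of nontrivial loops, Property~\ref{prop_traces_supported_on_e}), finitely generated Hilbert $A_{1/k}$-module bundles $E_{1/k}\to M$ with holonomy $1/k$-close to the identity, and the traces detect $\langle[E_{1/k}],S\cap[\slashed D_M]\rangle\neq 0\in K_0(A_{1/k})\otimes\IR$. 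Set $A=\prod_k A_{1/k}$, $A'=\bigoplus_k A_{1/k}$, $Q=A/A'$, and let $V\to M$ be Hanke's product bundle with fiber $qA$. The new diagram is obtained by applying $-\otimes_\mu C^*_\mu G$ to the part of Hanke's diagram to the right of $K_0(M)$: the key point is that the flat Hilbert $Q$-module bundle $W=V\otimes_\psi Q$ carries a unitary holonomy $\phi\colon\pi_1(M)\to \psi(q)Q\psi(q)\hookrightarrow Q$ and hence a class $[W]\in K_0(C(M)\otimes C^*_{\max}\pi_1(M))$; composing the resulting map $C^*_{\max}\pi_1(M)\to Q$ with the coproduct $\Delta\colon C^*_\mu\pi_1(M)\to C^*_{\max}\pi_1(M)\otimes_\mu C^*_\mu\pi_1(M)$ from Property~\ref{properties_coprod} produces the arrow $C^*_\mu\pi_1(M)\to Q\otimes_\mu C^*_\mu\pi_1(M)$ that replaces $\phi_*$, and the assembly map into $K_*(C^*_\mu\pi_1(M))$ now fits into the left column. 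The commutativity of this enlarged diagram — in particular, that twisting $h$ by the Mishchenko--Fomenko bundle for $C^*_\mu$ and then applying $\overline\Delta$ agrees with twisting by $V\otimes_\mu C^*_\mu\pi_1(M)$ and mapping down — is the content of the separate commutativity section and follows from naturality of the Kasparov product together with the compatibility of $\Delta$ with the various module structures.

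Finally, the exactness argument runs as before but inside the functor $-\otimes_\mu C^*_\mu G$. Let $z:=\langle[V\otimes_\mu C^*_\mu\pi_1(M)],h\rangle\in K_0(A\otimes_\mu C^*_\mu\pi_1(M))$. By Property~\ref{properties_direct_sum} we have $A'\otimes_\mu C^*_\mu\pi_1(M)\cong\bigoplus_k\bigl(A_{1/k}\otimes_\mu C^*_\mu\pi_1(M)\bigr)$, so $K_0$ of it is the direct sum $\bigoplus_k K_0(A_{1/k}\otimes_\mu C^*_\mu\pi_1(M))$; composing $z$ with the $k$-th projection and then with $C^*_\mu\pi_1(M)\to\IC$ (induced by the trace, or more directly mapping down to $A_{1/k}$ via $\psi_k$) recovers $\langle[E_{1/k}],S\cap[\slashed D_M]\rangle\neq 0$, so each component of $\chi(z)$ is nonzero after $\otimes\IR$. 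If the image of $z$ in $K_0(Q\otimes_\mu C^*_\mu\pi_1(M))\otimes\IR$ were zero, then by Property~\ref{properties_exact} (exactness of $-\otimes_\mu C^*_\mu\pi_1(M)$, giving the six-term sequence for $0\to A'\to A\to Q\to 0$ after $\otimes_\mu C^*_\mu\pi_1(M)$) $z\otimes\IR$ would lift to $K_0(A'\otimes_\mu C^*_\mu\pi_1(M))\otimes\IR=\bigoplus_k K_0(A_{1/k}\otimes_\mu C^*_\mu\pi_1(M))\otimes\IR$, forcing all but finitely many components of $\chi(z)$ to vanish — a contradiction. Hence the image in the lower-right corner is nonzero, and chasing the diagram back gives that the $C^*_\mu$-assembly image of $h$ is nonzero in $K_*(C^*_\mu G)\otimes\IR$, using $\alpha_\mu(S\cap[\slashed D_M])=\mu^{\mathrm{BC}}_\mu(f_*(S\cap[\slashed D_M]))$ since $f$ induces an isomorphism on $\pi_1$. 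The main obstacle I expect is not the exactness bookkeeping but verifying commutativity of the enlarged diagram at the stage where the coproduct $\Delta$ is inserted: one must check carefully that $\Delta$ intertwines the Mishchenko--Fomenko twisting for $C^*_\mu\pi_1(M)$ on the left with the composite $V\mapsto W$ construction followed by $-\otimes_\mu C^*_\mu\pi_1(M)$ on the right, which is exactly why we needed the stronger Fell absorption of Lemma~\ref{lem_strong_fell} landing in $-\otimes_\mu C^*_\mu G$ rather than in $-\otimes_{\min}C^*_\mu G$.
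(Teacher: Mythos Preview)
Your overall architecture matches the paper's: reduce to $c\in H^2(BG;\IZ)$ via the suspension/$G\times\IZ$ trick and Lemma~\ref{lem_extension_crossed_product}, run the Hanke--Schick construction of Section~\ref{2classesinfinite}, and enlarge Hanke's diagram by applying $-\otimes_\mu C^*_\mu G$, inserting the coproduct $\Delta$ from Property~\ref{properties_crossed_product}.\ref{properties_coprod}. You also correctly flag commutativity as the main obstacle.

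However, there is a genuine gap precisely at that point. You assert that commutativity of the enlarged diagram ``follows from naturality of the Kasparov product together with the compatibility of $\Delta$ with the various module structures,'' and your six-term-sequence argument then presumes that the two paths from $K_0(M)$ to $K_0(Q\otimes_\mu C^*_\mu G)$ agree there. They do not. On the top route the element $g\in G$ is sent to $\phi(g)\otimes g$ (via $\Delta$), while on the lower route it is sent to $\phi(g)\otimes e$ (via the inclusion $\iota_A\colon A\to A\otimes_\mu C^*_\mu G$, $a\mapsto a\otimes e$). The toy computation in Diagram~\eqref{eq_nontriviality} makes exactly this point: these only agree after pairing with a trace that behaves like $\tau_e$. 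So naturality alone is not enough, and your contradiction ``$(\psi\otimes\id)_*(z)=0$ would force a lift to $\bigoplus_k K_0(A_{1/k}\otimes_\mu C^*_\mu G)$'' never gets started, because you have no reason to believe the image of $\alpha_\mu(h)$ under the top route equals $(\psi\otimes\id)_*(z)$.

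The paper's fix is not to prove commutativity at the $K$-group level at all. Instead one constructs, by hand, a trace-type map $\tau_*\colon K_0(Q\otimes_\mu C^*_\mu G)\to \prod\IR/\bigoplus_{\alg}\IR$ (using exactness and the direct-sum property to lift projections to $\varepsilon$-projections in $\prod_{>K}(A_{1/k}\otimes_\mu C^*_\mu G)$ and then apply $\prod(\tau_{1/k}\otimes\tau_e)$), and proves that the two routes agree \emph{only after} $\tau_*$. The key computation is that for any finite sum $\sum a_g g$ one has $(\tau_{1/k}\otimes\tau_e)\big(\sum a_g\operatorname{Hol}^{1/k}(\gamma(g))\otimes g\big)=(\tau_{1/k}\otimes\tau_e)\big(\sum a_g\operatorname{Hol}^{1/k}(\gamma(g))\otimes e\big)$, which holds precisely because $\tau_{1/k}$ vanishes on holonomies of nontrivial loops (Property~\ref{prop_traces_supported_on_e}); this is then pushed through the functional calculus to handle almost-projections. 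You mention Property~\ref{prop_traces_supported_on_e} in passing but never deploy it, and it is the entire reason the argument works. In short: replace your six-term-sequence step by the construction of the dashed trace map and move the commutativity claim from $K_0(Q\otimes_\mu C^*_\mu G)$ down to $\prod\IR/\bigoplus_{\alg}\IR$, where Property~\ref{prop_traces_supported_on_e} makes it true.
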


\begin{rem}
Theorem~\ref{thm_low_degree_exotic} applies in particular for the group $C^*$-algebra $C^*_\epsilon(G)$ and therefore also for every other exotic group $C^*$-algebra above it, that is, any completion of the group algebra $\IC G$ for a $C^*$-norm above the norm giving $C^*_\epsilon(G)$.
\end{rem}

\begin{rem}\label{rem_suspension_argument}
Let us explain the suspension trick used in the odd-dimensional case of the  proof of Theorem~\ref{thm_low_degree_exotic}.

Let us denote by $z \in K_1(S^1)$ a generator. We have exterior products
\begin{alignat*}{3}	
	-\boxtimes z & \colon & K_*(BG) & \to K_{*+1}(B(G \times \IZ))\,,\\
	-\boxtimes \alpha(z) & \colon & K_*(C^*_\mu G) & \to K_{*+1}(C^*_{\widetilde\mu}(G \times \IZ))\,,
\end{alignat*}
where we have used Lemma~\ref{lem_extension_crossed_product} to define the exterior product for the group C*-algebras, and both exterior products are injective by the respective Kuenneth theorems.

If $h \in K_1(BG)$ satisfies the assumption of the Theorem~\ref{thm_low_degree_exotic}, then $h \boxtimes z$ also satisfies it (since $c \boxtimes \varphi_z$ is an element of $\Lambda^*(G \times \IZ)$ if $c \in \Lambda^\ast(G)$, where $\varphi_z \in H^1(S^1)$ is a generator with $\varphi_z(\ch z) = 1$).

Therefore, if we can show that $h \boxtimes z$ is not mapped to zero under the assembly map $K_0(B(G \times \IZ)) \to K_0(C^*_{\widetilde\mu}(G \times \IZ)) \otimes \IR$, then $h$ will not be mapped to zero under the assembly map $K_1(BG) \to K_1(C^\ast_\mu G) \otimes \IR$.
\end{rem}

\begin{proof}[Proof of Theorem~\ref{thm_low_degree_exotic}]
The proof is an adaptation of the proof of \cite[Thm.~4.1]{hanke_K_area}, which is itself an elaboration on the proof given in \cite{hanke_schick}.

By the suspension argument explained in Remark~\ref{rem_suspension_argument} we can assume that $h \in K_0(BG)$. For simplicity we also assume that $c \in H^2(BG;\IZ)$. The general case $c \in \Lambda^*(G)$ reduces to the former case as described at the end of Section 2 of \cite{hanke_schick}.
We thus have a couple $(h,c)$ as in \eqref{pairgroup}. Indeed, we first perform the construction of $M$, of $(A_{1/k})_k$, the bundles $(E_{1/k})_k$ and traces $\tau_{1/k}$ explained in  section \ref{2classesinfinite}.
We apply to these ingredients the infinite bundle construction we summarised in the proof of Theorem~\ref{thm_hanke} and we get the following diagram, which is a modified and expanded version of  Diagram~\eqref{eq_diagram_hanke}. We will explain the arrows occurring in it further below.
\begin{equation}
\label{eq_diagram_hanke_modified}
\mathclap{
\xymatrix{
K_0(M) \ar[r]^-{\langle [\mathcal{M\! F}],-\rangle}_-{\alpha_\mu} \ar[d]_{=} & K_0(C^\ast_\mu G) \ar[r]^-{\Delta_\ast} & K_0(C^*_\mathrm{max} G \otimes_\mu C^*_\mu G) \ar[r]^-{(\phi \otimes \mathrm{id})_\ast} & K_0(Q \otimes_\mu C^*_\mu G) \ar@{-->}@/^1pc/[ddddl] \\
K_0(M) \ar[r]^-{\langle [V],-\rangle} & K_0(A) \ar[r]^-{\psi_\ast} \ar[d]^{(\iota_A)_*} & K_0(Q) \ar[d] &\\
& K_0(A \otimes_\mu C^*_\mu G) \ar[r]^-{(\psi \otimes \mathrm{id})_\ast} \ar[d]^{\prod (\psi_k \otimes \mathrm{id})_\ast} & K_0(Q \otimes_\mu C^*_\mu G) \ar@{-->}[dd]^{} &\\
& \prod K_0(A_{1/k} \otimes_\mu C^*_\mu G) \ar[d]^{\prod (\tau_{1/k} \otimes \tau_e)} &&&\\
& \prod \IR \ar[r] & \prod \IR \big/ \bigoplus_{\textrm{alg}} \IR &&
}
}
\end{equation}

The first arrow in the top line is the composition of the map $\alpha\colon K_0(M) \to K_0(C^\ast_\mathrm{max} G)$ from Diagram~\eqref{eq_diagram_hanke} with the map induced from the $^*$-homomorphism $C^*_\mathrm{max} G \to C^*_\mu G$ given by Property~\ref{properties_crossed_product}.\ref{properties_group_alg_between_max_red}. Note that $\alpha_\mu$ factors as the composition of $K_0(M) \to K_0(BG)$ with the higher index map $K_0(BG) \to K_0(C^\ast_\mu G)$, where the first of these maps is induced by a choice of classifying map $M \to BG$.

The map $\phi$ and the morphism $(\phi \otimes \textrm{id})_*$ with it arise from the holonomy representation of a flat bundle as in \eqref{morphismpi}. This is defined on $C^*_{\mathrm{max}}\pi_1(M)$. Since $f$ induces an isomorphism of  fundamental groups, we get a map defined on $C^*_{\mathrm{max}}G$.

The second arrow in the top line is induced by the coproduct whose existence is guaranteed by Property~\ref{properties_crossed_product}.\ref{properties_coprod}. The bottom vertical map on the left
\[\prod (\tau_{1/k} \otimes \tau_e)\colon \prod K_0(A_{1/k} \otimes_\mu C^*_\mu G) \to \prod \IR\]
is induced from the maps $A_{1/k} \otimes_\mu C^*_\mu G \to A_{1/k} \otimes_{\mathrm{min}} C^*_\mu G$ (which exist by Property~\ref{properties_crossed_product}.\ref{properties_between_max_red}) composed with the tensor products $\tau_{1/k} \otimes \tau_e$ of the given traces $\tau_{1/k}$ on the algebras $A_{1/k}$ with the canonical trace $\tau_e$ on $C_r^* G$. By $\bigoplus_{\textrm{alg}} \IR$ in the lower right corner we mean the algebraic direct sum, i.e. sequences with only finitely many non-zero entries.

The dashed arrows will be constructed at the end of this proof, and the commutativity of the diagram will be shown in Corollary~\ref{cor_commutativity_huge_diagram} below.

Given all the above, the claim of this theorem follows quickly: the left path from $K_0(M)$ to $\prod \IR$, applied to the element $h$, results in a sequence all of whose entries are non-zero due to the assumptions of this theorem (compare to Step~\ref{step_three} of the proof of Theorem~\ref{thm_hanke}). Hence it stays non-zero if we map it further to $\prod \IR \big/ \bigoplus_{\textrm{alg}} \IR$. By commutativity of the diagram this means that $\alpha_{\mu}(h) \in K_0(C^\ast_\mu G)$ is non-zero. Tensoring the diagram with~$\IR$ we still obtain the same conclusion. Hence $\alpha(h) \not= 0 \in K_0(C^\ast_\mu G) \otimes \IR$.\footnote{Note that there is no concrete reason here to tensor with $\IR$. We could instead also tensor with $\IQ$.}

It remains to construct the dashed arrows in  Diagram~\eqref{eq_diagram_hanke_modified}, i.e., the map
\[K_0(Q \otimes_\mu C^*_\mu G) \to \prod \IR \big/ {\bigoplus}_{\textrm{alg}} \IR\,.\]
An element $x \in K_0(Q \otimes_\mu C^*_\mu G)$ is represented by a difference $x = [p] - [q]$ of projections $p$ and $q$ in matrices over $Q \otimes_\mu C^*_\mu G$. We will now explain how to evaluate a single projection like $p$ to something in $\prod \IR \big/ \bigoplus_{\textrm{alg}} \IR$. 

By Property~\ref{properties_crossed_product}.\ref{properties_exact} we have an exact sequence
\begin{equation}\label{exactseq}
0 \to A^\prime \otimes_\mu C^*_\mu G \to A \otimes_\mu C^*_\mu G \to Q \otimes_\mu C^*_\mu G \to 0,
\end{equation}
and we will show:
\begin{itemize}
\item for any small $\varepsilon$, say $\varepsilon <\sfrac{1}{8}$, we find a $K \in \mathbb{N}$ and a "lift" $p^\prime_{>K} \in \prod_{>K} (A_{1/k}\otimes_{\mu}C^*_{\mu}G)$ which is an $\varepsilon$-projection and can be evaluated suitably by $\prod_{> K}(\tau_{1/k}\otimes \tau_e)$. The result is in $\prod \IR \big/ \bigoplus_{\textrm{alg}} \IR$, and
\item this is independent on the choices.
\end{itemize}
We give the details in the following.

From the exactness of \eqref{exactseq} we can lift $p$ to a self-adjoint matrix $\tilde p$ over $A \otimes_\mu C^*_\mu G$ which is a projection modulo matrices over $A^\prime \otimes_\mu C^*_\mu G$. We can apply now the map $\prod (\psi_k \otimes \mathrm{id})$ to map $\tilde p$ to a self-adjoint matrix $p^\prime$ over $\prod (A_{1/k} \otimes_\mu C^*_\mu G)$.
Because of the Property~\ref{properties_crossed_product}.\ref{properties_direct_sum} we have $\prod (\psi_k\otimes \mathrm{id})(A'\otimes_\mu C_\mu^* G)\subseteq \bigoplus_k (A_{1/k}\otimes_\mu C_\mu^* G)$, thus $p^\prime$ will be a projection modulo matrices over $\bigoplus (A_{1/k} \otimes_\mu C^*_\mu G)$.
Hence, fixing an $\varepsilon < \sfrac{1}{8}$, there will be $K \in \IN$, such that $p^\prime_{>K}$ is an $\varepsilon$-projection in $\prod_{>K} (A_{1/k} \otimes_\mu C^*_\mu G)$. So, if $\mathrm{proj}(-)$ denotes the characteristic function of the interval $[1-\sfrac{1}{4},1+\sfrac{1}{4}]$, we get an honest projection $\mathrm{proj}(p^\prime_{>K})$ in $\prod_{>K} (A_{1/k} \otimes_\mu C^*_\mu G)$ which can be evaluated by $\prod_{>K} (\tau_{1/k} \otimes \tau_e)$ to a value in $\prod \IR \big/ \bigoplus_{\textrm{alg}} \IR$. It remains to show that this is well-defined. If we have a second lift $p^{\prime\prime}_{>K^\prime}$, then $\mathrm{proj}(p^\prime_{>K})$ and $\mathrm{proj}(p^{\prime\prime}_{>K^\prime})$ will be $(\sfrac{1}{2} + \sfrac{1}{10})$-close in $\prod_{>K^{\prime\prime}} (A_{1/k} \otimes_\mu C^*_\mu G)$ for some large $K^{\prime\prime}$ and hence they will be unitarily equivalent.\footnote{It is a general fact about C*-algebras that projections $P$, $Q$ with $\|P-Q\|<1$ are unitarily equivalent.}
\end{proof}

\subsection{Commutativity of the main diagram}
\label{subsec_commutativity_diag}

Let us explain why commutativity of Diagram~\eqref{eq_diagram_hanke_modified} is non-trivial. We denote by
\begin{itemize}
\item $\IC G$ the complex group ring of $G$,
\item $\Delta\colon \IC G \to \IC G \odot \IC G$ the coproduct $\sum a_g g \mapsto \sum a_g (g \otimes g)$,
\item $\iota\colon \IC G \to \IC G \odot \IC G$ the inclusion $\sum a_g g \mapsto \sum a_g (g \otimes e)$, where $e$ is the identity element of the group $G$,
\item $\tau\colon \IC G \to \IC$ any trace on $\IC G$, and
\item $\tau_e\colon \IC G \to \IC$ the canonical trace $\sum a_g g \mapsto a_e$ on $\IC G$.
\end{itemize}
Now we consider the diagram
\begin{equation}
\xymatrix{
 \IC G \ar[r]^-{\Delta} \ar[d]_-{\iota} & \IC G \odot \IC G \ar[d]^{\tau \otimes \tau_e}\\
 \IC G \odot \IC G \ar[r]_-{\tau \otimes \tau_e} & \IC
}
\label{eq_nontriviality}
\end{equation}
We have
\[((\tau \otimes \tau_e)\circ \Delta)\Big(\sum a_g g\Big) = a_e \cdot \tau(e)\]
and for the other composition in the diagram we have
\[((\tau \otimes \tau_e)\circ\iota)\Big(\sum a_g g\Big) = \sum a_g \tau(g)\,.\]
Hence Diagram~\eqref{eq_nontriviality} only commutes in the case that $\tau$ is a multiple of $\tau_e$.

Now Diagram~\eqref{eq_nontriviality} is similar to Diagram~\eqref{eq_diagram_hanke_modified} in the sense that the composition of the top arrows of Diagram~\eqref{eq_diagram_hanke_modified} with the dashed arrow is similar to the composition of the top and right vertical arrow in Diagram~\eqref{eq_nontriviality}, and the composition of the horizontal arrows in the second row of Diagram~\eqref{eq_diagram_hanke_modified} with the dashed arrow is similar in flavour to the composition of the left vertical and lower arrow in Diagram~\eqref{eq_nontriviality}. So we expect commutativity of Diagram~\eqref{eq_diagram_hanke_modified} only if the traces $\tau_{1/k}$ on the algebras $A_{1/k}$ used in the definition of the dashed arrows in Diagram~\eqref{eq_diagram_hanke_modified} are of similar kind as the canonical evaluation trace on the identity element in the group C*-algebras.
But this is exactly Property \ref{prop_traces_supported_on_e} as we discussed in Section \ref{2classesinfinite}.


Before proving that Diagram~\eqref{eq_diagram_hanke_modified} commutes we discuss some basic facts about almost projections and $K$-theory.

Let $A$ be a unital  $C^*$-algebra and keep fixed a small $\varepsilon >0$ for the entire following discussion. Let us explain how self-adjoint \emph{almost idempotents} over $A$ define $K$-theory classes (see \cite[Sec.~2.2]{XieYu}).\footnote{The self-adjointness is actually not strictly necessary for this. We have incorporated it since in our situation it can always be arranged.} An almost idempotent is an element $p \in A$ which satisfies $\| p^2-p\|<\varepsilon$. 
Let $p$ be a self-adjoint almost idempotent in $M_n(A)$; then
there are disjoint open sets $U,V \subset \mathbb{C}$ with disjoint closure
separating $0$ and $1$ in its spectrum:
$$\operatorname{spec}(p) \subset U \cup V\,, \quad 0 \in U\,, \quad 1 \in V\,. $$ 
Choose a real-valued function $h$ on $\IC$ with $h(\xi)=0$ on $U$ and $h(\xi)=1$ on $V$.
Then the holomorphic  functional calculus, integrating the function $h$
over a contour $\mathcal{C}$ surrounding $\operatorname{spec}(z)$ inside $U \cup V$, produces an honest projection and we define the $K$-theory class of $p$ to be
$$[p]:=\Big{[}\frac{1}{2\pi i}\int_{\mathcal{C}} h(\xi)(\xi-p)^{-1} d\xi\Big{]}\,.$$
Notice that we do not actually need the holomorphic functional calculus because we are discussing the construction using selfadjoints. However, since this works just as well for quasi-idempotents, we continue using it.
Denote by $V_{\varepsilon}(A)$ the space of self-adjoint almost idempotents of matrices over $A$; by considering formal differences in the procedure before, we have constructed a surjection $V_{\varepsilon}(A) \times V_{\varepsilon}(A) \to K_0(A)$. Every ${}^\ast$-homorphism $f\colon A \to B$ of (unital) $C^*$-algebras $A$ and $B$ is contractive, therefore it restricts to a map $V_{\varepsilon}(A) \to V_{\varepsilon}(B)$ which induces a map $f_*\colon K_0(A) \to K_0(B)$.  

Let now $T\colon A \to Z$ be a positive tracial map with $Z$ a commutative $C^*$-algebra; it induces a map $T_*\colon K_0(A) \to Z$.
Note that $T_*$ maps to the self-adjoint elements in $Z$.
Let us compute $T_*([p])$ for the class of a self-adjoint almost idempotent. Continuity implies  
$$T_*([p])= \frac{1}{2\pi i}\int_{\mathcal{C}}h(\xi)T ((\xi-p)^{-1})d\xi\,.$$
To keep track of these classes which are defined by self-adjoint almost idempotents it can be useful to introduce the notation 
$$ \Big{[}\int y\Big{]}:= \Big{[}\frac{1}{2\pi i} \int_{\mathcal{C}} h(\xi)(\xi - y)^{-1} d\xi \Big{]}\,.$$
In this way, under a morphism $F$ we have $F_*[\int y]=[\int F(y)]$.

With this in mind we reconstruct the dashed map $\tau_*\colon K_0(Q\otimes_{\mu}C^*_{\mu}G)\to \prod \IR  / \bigoplus_{\textrm{alg}}\IR$ using self-adjoint almost idempotents.  Since general classes are formal differences, we explain as before the construction for a single self-adjoint almost idempotent element $p \in M_n ( Q \otimes_{\mu}C^*_{\mu}G)$ with $\|p^2-p\|<\varepsilon$.
Let $p^\prime \in M_n(A\otimes_{\mu}C^*_{\mu}G)$ be any lift in the sequence \eqref{exactseq}. By considering $(p^\prime + {p^\prime}^*)/2$ we can assume that $p^\prime$ is self-adjoint. By definition of the quotient norm on $M_n(Q \otimes_{\mu}C^*_{\mu}G)$ we have
\[
\inf_{y \in  M_n(A' \otimes_{\mu}C^*_{\mu}G)}\| {p^\prime}^2 - {p^\prime} -y\|= \|p^2-p\| <\varepsilon \,,
\]
where the norm on the left hand side is the one on $M_n(A \otimes_{\mu}C^*_{\mu}G)$ and the norm on the right hand side the one on $M_n(Q \otimes_{\mu}C^*_{\mu}G)$.
We can thus find a $y_0 \in  M_n(A' \otimes_{\mu}C^*_{\mu}G)$ with $\| {p^\prime}^2 - {p^\prime} -y_0\| < \varepsilon$; without loss of generality we may assume that $y_0$ is self-adjoint (by passing to $(y_0 + y_0^*)/2$). 
We continue to denote by $\psi_k \otimes \operatorname{id}$ the extension of the morphism to matrix algebras. Of course, we also exchange the tensor product with $M_n$ and $\prod$. Then
we look at the image $$\big(\prod_{k}\psi_k \otimes \operatorname{id} \big)(p^\prime)= (q_k)_k \in \prod_k M_n(A_{1/k}\otimes_{\mu}C^*_{\mu}G)\,.$$
Recall that $A'$ is the direct sum of the $A_{1/k}$, and let be $K$ such that $\|(\psi_k \otimes \operatorname{id} ) y_0 \|< \varepsilon$ for $k >K$. It follows that $$q_{>K}:=(q_k)_{>K} \in \prod_{k>K} M_n(A_{1/k}\otimes_{\mu}C^*_{\mu}G)$$ is a self-adjoint almost idempotent with $\|q_{>K}^2- q_{>K}\|<2\varepsilon$.
Extend as customary the map $\prod_k \tau_k \otimes \tau_e$ to matrices over $\prod_k A_{1/k} \otimes_{\mu}C^*_{\mu}G$ by tensoring with the matrix trace. We continue to denote it with the same symbol.

Now it remains to apply the functional calculus to define
\[\tau_* ([p]) \coloneqq \Big( \prod_{k>K} \tau_k \otimes \tau_e\Big)_* \Big{[}\int q_{>K}\Big{]} \, \,\operatorname{mod}\,\, \oplus_{\textrm{alg}}\, \mathbb{R}\,.\]
This procedure defines a map 
	$\tau_*\colon K_0(Q\otimes_{\mu}C^*_{\mu}G)\longrightarrow \prod \IR  / \bigoplus_{\textrm{alg}}\IR$ which coincides with the dashed arrow constructed in the proof of Theorem \ref{thm_low_degree_exotic}.
	
\begin{prop}
We have a commutative diagram
\begin{equation*}
\mathclap{
\xymatrix{
			K_0(C_{\mathrm{max}}^* G) \ar[rr]^-{(\iota_{C^*_{\mathrm{max}}G})_*} \ar[d] &&
				K_0(C_{\mathrm{max}}^*G \otimes_\mu C_\mu^* G) \ar[rr]^-{(\phi \otimes_\mu \mathrm{id})_*} &&
				K_0(Q\otimes_\mu C_\mu^* G) \ar[r]^-{\tau_*}  
				& \prod \IR /\bigoplus_{\alg} \IR\\
			K_0(C_\mu^* G) \ar[rr]^-{\Delta_*} &&
				K_0(C_{\mathrm{max}}^* G \otimes_\mu C_\mu^* G) \ar[rr]^-{(\phi\otimes_\mu \mathrm{id})_*} &&
				K_0(Q\otimes_\mu C_\mu^*G) \ar[ru]^-{\tau_*}
		}
}
	\end{equation*}	
\end{prop}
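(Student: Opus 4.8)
The plan is to unwind the two composites down to the level of self-adjoint $\varepsilon$-idempotents over $\IC G$, using the explicit description of $\tau_*$ given just above, and to reduce the whole statement to a single algebraic identity — which is precisely the phenomenon behind the non-commuting square \eqref{eq_nontriviality}: its failure to commute is controlled by how far a trace differs from the evaluation at $e$, and Property~\ref{prop_traces_supported_on_e} says that the $\tau_{1/k}$ do not differ from it. So first I would reformulate: both composites in the diagram factor through $\tau_*\circ(\phi\otimes_\mu\mathrm{id})_*$, so writing $q\colon C^*_{\max}G\to C^*_\mu G$ for the unlabelled left vertical arrow, $\iota:=\iota_{C^*_{\max}G}\colon a\mapsto a\otimes 1$, and $\delta:=\Delta\circ q$, it suffices to prove that the two $^*$-homomorphisms $\iota,\delta\colon C^*_{\max}G\to C^*_{\max}G\otimes_\mu C^*_\mu G$ induce the same map $K_0(C^*_{\max}G)\to\prod\IR/\bigoplus_{\alg}\IR$ after post-composition with $\tau_*\circ(\phi\otimes_\mu\mathrm{id})_*$. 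By Lemma~\ref{lem_strong_fell}, on $\IC G$ the map $\delta$ sends $g\mapsto g\otimes g$ whereas $\iota$ sends $g\mapsto g\otimes 1$; these genuinely differ, and the claim is that $\tau_*\circ(\phi\otimes_\mu\mathrm{id})_*$ does not see the difference.

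Next I would reduce to almost idempotents over $\IC G$ and choose convenient lifts. An arbitrary class in $K_0(C^*_{\max}G)$ is a difference of classes $[e]$ of self-adjoint $\varepsilon$-idempotents $e\in M_n(C^*_{\max}G)$, and by density of $\IC G$ together with the standard perturbation argument one may take $e=\sum_g a_g g\in M_n(\IC G)$, finitely supported, $a_g\in\Mat_n(\IC)$. Fixing loop representatives $\gamma_g$ of the elements of $G=\pi_1(M)$ with $\gamma_{g^{-1}}=\gamma_g^{-1}$, and letting $\phi^{(k)}_g\in A_{1/k}$ be the corresponding $\nabla^{1/k}$-parallel transports (unitaries, with $(\phi^{(k)}_g)^*=\phi^{(k)}_{g^{-1}}$), the linear map $\widetilde\phi\colon\IC G\to A=\prod_k A_{1/k}$, $g\mapsto(\phi^{(k)}_g)_k$, is $^*$-preserving and its composition with $A\onto Q$ is the flat holonomy $\phi$ on $\IC G$ (this is the reason $W$ is flat; the corner projection from Section~\ref{2classesinfinite} only rescales and plays no role below). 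Hence $p':=\sum_g a_g\,\widetilde\phi(g)\otimes 1$ and $p'':=\sum_g a_g\,\widetilde\phi(g)\otimes g$ are self-adjoint lifts, along the exact sequence \eqref{exactseq}, of $(\phi\otimes_\mu\mathrm{id})(\iota(e))$ and $(\phi\otimes_\mu\mathrm{id})(\delta(e))$, and are $\varepsilon$-idempotents modulo $M_n(A'\otimes_\mu C^*_\mu G)$. Applying $\prod_k(\psi_k\otimes\mathrm{id})$ and choosing $K$ large, for $k>K$ the $k$-th components $p'_k=\sum_g a_g\,\phi^{(k)}_g\otimes 1$ and $p''_k=\sum_g a_g\,\phi^{(k)}_g\otimes g$ in $M_n(A_{1/k}\otimes_\mu C^*_\mu G)$ are $2\varepsilon$-idempotents, and — since the recipe for $\tau_*$ is independent of the chosen lift — $\tau_*$ of the two classes is represented modulo $\bigoplus_{\alg}\IR$ by the sequences $\big(T_k(\mathrm{proj}(p'_k))\big)_{k>K}$ and $\big(T_k(\mathrm{proj}(p''_k))\big)_{k>K}$, where $T_k:=\mathrm{Tr}\otimes\tau_{1/k}\otimes\tau_e$.

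Finally I would prove the componentwise identity $T_k(\mathrm{proj}(p'_k))=T_k(\mathrm{proj}(p''_k))$ for all large $k$. Both $p'_k$ and $p''_k$ are self-adjoint with spectra in one common small neighbourhood of $\{0,1\}$, on which the characteristic function of $[1-\tfrac14,1+\tfrac14]$ extends continuously and is a uniform limit of polynomials; hence $\mathrm{proj}(p'_k)$ and $\mathrm{proj}(p''_k)$ are norm-limits of the \emph{same} polynomials evaluated at $p'_k$ resp.\ at $p''_k$, and by continuity of $T_k$ it is enough to check $T_k((p'_k)^m)=T_k((p''_k)^m)$ for every $m$. Expanding the finite $m$-th powers and applying $T_k$ to the elementary tensors,
\[T_k\big((p'_k)^m\big)=\sum_{g_1,\dots,g_m}\mathrm{Tr}(a_{g_1}\cdots a_{g_m})\,\tau_{1/k}\big(\phi^{(k)}_{g_1}\cdots\phi^{(k)}_{g_m}\big)\,\tau_e(1),\]
and the same expression with $\tau_e(1)$ replaced by $\tau_e(g_1\cdots g_m)$ computes $T_k((p''_k)^m)$. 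Since $\phi^{(k)}_{g_1}\cdots\phi^{(k)}_{g_m}$ is the $\nabla^{1/k}$-parallel transport along the loop $\gamma_{g_1}\ast\cdots\ast\gamma_{g_m}$, which represents $g_1\cdots g_m\in\pi_1(M)$, Property~\ref{prop_traces_supported_on_e} forces the factor $\tau_{1/k}(\phi^{(k)}_{g_1}\cdots\phi^{(k)}_{g_m})$ to vanish unless $g_1\cdots g_m=e$, and when $g_1\cdots g_m=e$ one has $\tau_e(g_1\cdots g_m)=\tau_e(e)=1=\tau_e(1)$; so the two sums agree term by term. Thus the sequences coincide for all large $k$, hence modulo $\bigoplus_{\alg}\IR$, and the diagram commutes.

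The conceptual heart — and the only nontrivial step — is this last matching, i.e.\ Property~\ref{prop_traces_supported_on_e}: it is exactly what repairs the non-commuting square \eqref{eq_nontriviality} once $\tau_*$ is applied. Everything else is routine; the two points needing a little care are that $\tau_*$ must be recalled to be independent of the chosen lift, so that the explicit $\widetilde\phi$-lifts may be used, and that the functional-calculus approximation uses one and the same sequence of polynomials for $p'_k$ and $p''_k$, which is legitimate because their spectra lie in a single common compact set.
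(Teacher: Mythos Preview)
Your proposal is correct and follows essentially the same route as the paper: approximate a class in $K_0(C^*_{\max}G)$ by a self-adjoint almost idempotent $\sum a_g g$ in $M_n(\IC G)$, lift both images in $Q\otimes_\mu C^*_\mu G$ via holonomy along chosen loop representatives to obtain the two explicit elements in $M_n(A\otimes_\mu C^*_\mu G)$, reduce the comparison of $\tau_*$ to equality of traces on all powers, and conclude from Property~\ref{prop_traces_supported_on_e} that only the terms with $g_1\cdots g_m=e$ survive. The paper phrases the functional-calculus step via the holomorphic contour integral $[\int y]$ rather than your continuous polynomial approximation of $\mathrm{proj}(-)$, but this is cosmetic; your remark that one common compact set contains both spectra, so the \emph{same} polynomial sequence may be used, is exactly the point needed and is left implicit in the paper.
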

\begin{proof}
Start with a class $[x] \in K_0(C^*_{\textrm{max}}G)$ represented 	by a true projection in $M_n(C^*_{\operatorname{max}}G)$. By density we can find a self-adjoint $y \in M_n(\mathbb{C}G)$ which approximates $x$. 
We write it as a finite sum 
$$y= \sum a_g \, g\,, \quad a_g \in M_n(\mathbb{C})\,.$$
Then $y$ is a self-adjoint almost idempotent. It follows that $y$ represents $[x]$ in the sense of the discussion before. Let us move $[x]$ to $K_0(Q \otimes_{\mu}C^*_{\mu}G)$ according to the up route and the down route in the above diagram. We find two elements:
$$y_1=\Big{[}\int \sum_{g} a_g \phi(g) \otimes e \Big{]} \quad \textrm{and}\quad y_2=\Big{[}\int \sum_{g} a_g \phi(g) \otimes g \Big{]}\,.$$
We have to show that applying $\tau_*$ we get the same result. 
Remembering that $A$ is the product of all the holonomy algebras along loops, for these two elements we have a class of preferred lifts in $A \otimes_{\mu}C^*_{\mu}G$.
Indeed
  for any element $g \in G$ we choose a smooth loop $\gamma(g)$ representing $g$ in the fundamental group.   Denote  with $\operatorname{Hol}(\gamma(g))=(\operatorname{Hol}^{1/k}(\gamma(g)))_{k}\in A$ the collection of the parallel translations along this loop with respect to $\nabla^{1/k}$,
then we have two lifts of $y_1$ and $y_2$. These are
$$z_1=\sum a_g \operatorname{Hol}(\gamma(g)) \otimes e\,, \quad  {\textrm{and}}\quad  z_2=\sum a_g \operatorname{Hol}(\gamma(g)) \otimes g$$ in $M_n \otimes (A \otimes_{\mu}C^*_{\mu}G)\cong M_n  (A \otimes_{\mu}C^*_{\mu}G)$. We can assume that $z_i$ with $i=1,2$ are: 
\begin{itemize}
\item  self-adjoint (for if not, just take $(z_i + z_i^*)/2$ --- they lift in the same way, because $y_1$ and $y_2$ are self-adjoint),
\item 	almost idempotent (because if not, we can cut away a finite number of components of the $((\phi_g)^{1/k})_k$ as in the discussion before).
\end{itemize}
 Now given the integral formula for $\tau_*$ and the use of the functional calculus the proof will be complete if we manage to show that for any polynomial with real coefficients $f$ we have
$$\big(\prod_k \tau_k \otimes \tau_e \big)\big(\prod_k \psi_k \otimes \operatorname{id}\big) f(z_1) = \big(\prod_k \tau_k \otimes \tau_e \big)\big(\prod_k \psi_k \otimes \operatorname{id}\big) f(z_2).$$
Let us check it for any power: we have
$$z_1^m= \sum_{(g_1,...,g_m)\in G^m }a_{g_1}\cdots a_{g_m}\operatorname{Hol}(\gamma(g_1)\cdots \gamma(g_m)) \otimes e $$ and 
$$z_2^m= \sum_{(g_1,...,g_m)\in G^m }a_{g_1}\cdots a_{g_m}\operatorname{Hol}(\gamma(g_1)\cdots \gamma(g_m)) \otimes g_1 \cdots g_m\,.$$ 
We apply our maps $\prod \tau_k \otimes \tau_e$ (recall that these maps have been extended to matrices) and then look at the $k$th-component of the result:
\begin{align}\nonumber
\Big{(}\big(\prod_k \tau_k \otimes \tau_e \big)\big (\prod_k \psi_k &\otimes \operatorname{id}\big)z_1^m\Big{)}_k =\\  \label{sumtrace}
=& 
\sum_{(g_1,...,g_m) \in G^m} \operatorname{tr}_{M_n}(a_{g_1}\cdots a_{g_m} )\tau_k \operatorname{Hol}^{1/k}(\gamma(g_1) \cdots \gamma(g_k))
\end{align} 
The element $\gamma(g_1) \cdots  \gamma(g_m)$ represents the product $g_1 \cdots g_m \in \pi_1(M,p)$; it follows by Property~\ref{prop_traces_supported_on_e} that the sum in \eqref{sumtrace} is just performed on the elements $(g_1,...,g_m)$ such that $g_1 \cdots g_m=e$. This is to say that 
$$\big(\prod_k \tau_k \otimes \tau_e \big)\big(\prod_k \psi_k \otimes \operatorname{id}\big) z_1^m= \big(\prod_k \tau_k \otimes \tau_e \big)\big(\prod_k \psi_k \otimes \operatorname{id}\big) z_2^m$$
finishing this proof.
\end{proof}

From the above proposition we can conclude the sought corollary:
\begin{cor}\label{cor_commutativity_huge_diagram}
Diagram~\eqref{eq_diagram_hanke_modified} commutes.
\end{cor}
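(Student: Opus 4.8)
The plan is to view Diagram~\eqref{eq_diagram_hanke_modified} as a concatenation of smaller regions and verify them separately. Because the two copies of $K_0(Q\otimes_\mu C_\mu^*G)$ occurring in \eqref{eq_diagram_hanke_modified} receive genuinely \emph{different} $K$-theory classes --- this is exactly the discrepancy exhibited by Diagram~\eqref{eq_nontriviality} --- ``commutativity'' is to be understood as: every solid sub-square commutes on the nose, while the two branches landing in $\prod\IR\big/\bigoplus_{\alg}\IR$ through the dashed $\tau_*$-arrows induce the \emph{same} map out of $K_0(M)$. The hard part, reconciling the $\Delta$-branch with the $\iota$-branch, is precisely the preceding Proposition; the rest is routine functoriality together with an unwinding of the definition of the dashed arrow.

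For the solid squares: the top-left square of \eqref{eq_diagram_hanke_modified}, together with the maps $\alpha$, $\phi_*$ and $\psi_*$, becomes --- after composing $\alpha$ with the quotient $q\colon C_{\max}^*G\to C_\mu^*G$, so that $\alpha_\mu=q_*\circ\alpha$ --- Hanke's Diagram~\eqref{eq_diagram_hanke}, whose commutativity was recalled in the proof of Theorem~\ref{thm_hanke}; in particular $\phi_*\circ\alpha=\psi_*\circ\langle[V],-\rangle\colon K_0(M)\to K_0(Q)$. All remaining solid quadrilaterals are naturality squares for the functor $-\otimes_\mu C_\mu^*G$: for a unital $^*$-homomorphism $f\colon B\to C$ one has $(f\otimes_\mu\id)\circ\iota_B=\iota_C\circ f$ with $\iota_B(b)=b\otimes 1$, hence $(f\otimes_\mu\id)_*\circ(\iota_B)_*=(\iota_C)_*\circ f_*$ on $K$-theory; taking $f=\phi$ and $f=\psi$ makes the squares built from $\phi_*,(\iota_{C_{\max}^*G})_*,(\phi\otimes_\mu\id)_*$ and from $\psi_*,(\iota_A)_*,(\iota_Q)_*,(\psi\otimes_\mu\id)_*$ commute.

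Next I would unwind the dashed map $\tau_*\colon K_0(Q\otimes_\mu C_\mu^*G)\to\prod\IR\big/\bigoplus_{\alg}\IR$ built in the proof of Theorem~\ref{thm_low_degree_exotic} and re-expressed via self-adjoint almost idempotents just before the Proposition: by construction $\tau_*$ is computed by choosing \emph{any} self-adjoint almost-idempotent lift along the exact sequence \eqref{exactseq}, pushing it through $\prod(\psi_k\otimes\id)$, applying $\prod(\tau_{1/k}\otimes\tau_e)$ and reducing modulo $\bigoplus_{\alg}\IR$, and the result is independent of the chosen lift. Hence for any class $z\in K_0(A\otimes_\mu C_\mu^*G)$ the genuine projections representing $z$ are admissible lifts of those representing $(\psi\otimes_\mu\id)_*(z)$, so
\[
\tau_*\bigl((\psi\otimes_\mu\id)_*(z)\bigr)=\Bigl[\prod(\tau_{1/k}\otimes\tau_e)\bigl(\prod(\psi_k\otimes\id)_*(z)\bigr)\Bigr]\in\prod\IR\big/\bigoplus_{\alg}\IR.
\]
For $z=(\iota_A)_*\langle[V],h\rangle$ the right-hand side is exactly the value obtained by running $h$ down the left column of \eqref{eq_diagram_hanke_modified}; combined with the naturality square for $\psi$ this also identifies the route $K_0(Q)\to K_0(Q\otimes_\mu C_\mu^*G)\xrightarrow{\tau_*}\prod\IR\big/\bigoplus_{\alg}\IR$ with that column. (This already settles the part of the diagram not touching the top row, and uses only Properties~\ref{properties_crossed_product}, not the Proposition.)

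Finally, the preceding Proposition, applied to the class $\alpha(h)\in K_0(C_{\max}^*G)$ and using $\alpha_\mu=q_*\circ\alpha$, yields
\[
\tau_*\,(\phi\otimes_\mu\id)_*\,\Delta_*\,\alpha_\mu(h)\;=\;\tau_*\,(\phi\otimes_\mu\id)_*\,(\iota_{C_{\max}^*G})_*\,\alpha(h).
\]
Chaining this identity with the naturality square for $\phi$, then with Hanke's equality $\phi_*\alpha=\psi_*\langle[V],-\rangle$, then with the naturality square for $\psi$, and finally with the unwinding of $\tau_*$ above (taking $z=(\iota_A)_*\langle[V],h\rangle$), one gets that the top route of \eqref{eq_diagram_hanke_modified} followed by the dashed arrow agrees with the left column; passing to $\IR$-coefficients is immediate as in the last step of the proof of Theorem~\ref{thm_hanke}. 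I expect the only subtlety to be in the unwinding step: since the construction of $\tau_*$ is choice-dependent, one must observe that the lift coming from the left column is an admissible choice, which is why one needs the \emph{well-definedness} of $\tau_*$ here, not merely its existence. The geometrically substantial input --- that the traces $\tau_{1/k}$ behave like the evaluation trace $\tau_e$, i.e. Property~\ref{prop_traces_supported_on_e} --- has already been consumed in the proof of the Proposition.
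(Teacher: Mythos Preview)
Your proposal is correct and follows exactly the approach the paper intends: the preceding Proposition supplies the one non-formal step (reconciling the $\Delta$-branch with the $\iota$-branch under $\tau_*$), while the remaining pieces are Hanke's original Diagram~\eqref{eq_diagram_hanke}, naturality of $-\otimes_\mu C^*_\mu G$ applied to $\phi$ and $\psi$, and the well-definedness of $\tau_*$ used to identify the left column with $\tau_*\circ(\psi\otimes_\mu\id)_*\circ(\iota_A)_*$. The paper records the corollary with no explicit argument beyond ``from the above proposition we can conclude'', so your write-up is simply a detailed unpacking of that sentence.
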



\bibliography{./Bibliography_SNC_low_degree_exotic}
\bibliographystyle{amsalpha}

\end{document}